\theoremstyle{plain} 
 \newtheorem{thm}{Theorem}[section]
 \newtheorem{lem}[thm]{Lemma}
 \newtheorem{cor}[thm]{Corollary}
 \newtheorem{prop}[thm]{Proposition}
 \newtheorem{claim}[thm]{Claim}
\theoremstyle{definition}
  \newtheorem{defn}[thm]{Definition}
\theoremstyle{remark}
\renewcommand{\mod}{{\rm Mod}}
\renewcommand{\pmod}{{\rm PMod}}
\newcommand{\cal}{\mathcal}
\newcommand{\calc}{\mathcal{C}}
\newcommand{\calcp}{\mathcal{CP}}
\newcommand{\cald}{\mathcal{D}}
\newcommand{\cala}{\mathcal{A}}
\newcommand{\lk}{{\rm Lk}}
\newcommand{\ci}[2]{\cite[#1]{#2}}
\begin{document}

\title{The co-Hopfian property of surface braid groups}
\author{Yoshikata Kida}
\author{Saeko Yamagata}
\address{Department of Mathematics, Kyoto University, 606-8502 Kyoto, Japan}
\email{kida@math.kyoto-u.ac.jp}
\address{Faculty of Education and Human Sciences, Yokohama National University, 240-8501 Yokohama, Japan}
\email{yamagata@ynu.ac.jp}
\date{March 1, 2013}
\subjclass[2010]{20E36, 20F36}
\keywords{Surface braid groups; mapping class groups}

\begin{abstract}
Let $g$ and $n$ be integers at least 2, and let $G$ be the pure braid group with $n$ strands on a closed orientable surface of genus $g$.
We describe any injective homomorphism from a finite index subgroup of $G$ into $G$. 
As a consequence, we show that any finite index subgroup of $G$ is co-Hopfian.
\end{abstract}

\maketitle


\section{Introduction}

Let $S$ be a connected, compact and orientable surface which may have non-empty boundary.
Let $\mod^*(S)$ be the extended mapping class group of $S$.
A description of any isomorphism between two finite index subgroups of $\mod^*(S)$ was obtained in \cite{iva-aut}, \cite{kork-aut} and \cite{luo}.
A key step in these works is to compute the automorphism group of the complex of curves for $S$, denoted by $\calc(S)$.
More generally, to describe any injective homomorphism from a finite index subgroup of $\mod^*(S)$ into $\mod^*(S)$, superinjectivity of a simplicial map from $\calc(S)$ into itself was introduced by Irmak \cite{irmak1}.
Any such superinjective map is shown to be induced by an element of $\mod^*(S)$ in \cite{be-m}, \cite{bm-ar}, \cite{irmak1}, \cite{irmak2} and \cite{irmak-ns}.
The same conclusion was obtained for injective simplicial maps from $\calc(S)$ into itself by Shackleton \cite{sha}.
To prove similar results on the Torelli group and the Johnson kernel for a certain surface, variants of the complex of curves are introduced and studied in \cite{bm}, \cite{bm-add}, \cite{farb-ivanov}, \cite{kida-tor}, \cite{kida-cohop}, \cite{kida-inj} and \cite{mv}.

Let $\bar{S}$ be the closed surface obtained from $S$ by attaching disks to all boundary components of $S$.
We then have the homomorphism from the pure mapping class group $\pmod(S)$ of $S$ onto the mapping class group $\mod(\bar{S})$ of $\bar{S}$.
We define $P(S)$ as the kernel of this homomorphism. 
If the genus of $S$ is at least 2, then $P(S)$ is naturally identified with the fundamental group of the space of ordered distinct $p$ points in $\bar{S}$, where $p$ denotes the number of boundary components of $S$, as discussed in \cite[Theorem 4.2]{birman}.
In our previous paper \cite{kida-yama}, we showed that for some surfaces $S$, any isomorphism between two finite index subgroups of $P(S)$ is the conjugation by an element of $\mod^*(S)$.
The purpose of the present paper is to establish the same conclusion for any injective homomorphism from a finite index subgroup of $P(S)$ into $P(S)$.
As its immediate consequence, we show that any finite index subgroup $\Gamma$ of $P(S)$ is {\it co-Hopfian}, that is, any injective homomorphism from $\Gamma$ into itself is surjective.

In \cite{kida-yama}, inspired by a work due to Irmak-Ivanov-McCarthy \cite{iim}, we introduced the simplicial complex $\calcp(S)$ called the complex of HBCs and HBPs in $S$, on which $\mod^*(S)$ naturally acts.
We proved that the automorphism group of $\calcp(S)$ is naturally isomorphic to $\mod^*(S)$.
We also proved that any injective homomorphism from a finite index subgroup of $P(S)$ into $P(S)$ induces a superinjective map from $\calcp(S)$ into itself.
The present paper is thus devoted to showing that any superinjective map from $\calcp(S)$ into itself is surjective.
The following theorem is a consequence of these results.

\begin{thm}\label{thm-b}
Let $S$ be a connected, compact and orientable surface such that both the genus and the number of boundary components of $S$ are at least 2.
Then any superinjective map from $\calcp(S)$ into itself is induced by an element of $\mod^*(S)$.
\end{thm}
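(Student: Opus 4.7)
The plan is to reduce Theorem \ref{thm-b} to showing that every superinjective map $\phi\colon\calcp(S)\to\calcp(S)$ is in fact an automorphism. Once that is established, the theorem follows immediately from the natural isomorphism $\mod^*(S)\cong\aut(\calcp(S))$ already proved in \cite{kida-yama}. Since $\phi$ is superinjective, it is already injective on vertices and sends simplices to simplices of the same dimension, so the whole task reduces to showing that $\phi$ is surjective on vertices.

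My strategy is in three steps. First, I would promote the combinatorial characterizations of vertex and simplex types developed in \cite{kida-yama} to show that $\phi$ preserves the topological type of each HBC and each HBP, as well as the topological type of each simplex. The aim is to characterize each type by invariants internal to $\calcp(S)$ (for instance, the maximal dimension of a simplex containing a given vertex, and the combinatorial structure of its link $\lk$), and then to note that $\phi$ must respect all such invariants because it is superinjective and dimension-preserving. Second, I would fix a top-dimensional simplex $\sigma$ of $\calcp(S)$, corresponding to a ``maximal system'' in $S$ by HBCs and HBPs. Since $\phi$ preserves dimension, $\phi(\sigma)$ is again such a maximal system, and Step~1 lets me match up the types of vertices of $\sigma$ and $\phi(\sigma)$ one by one. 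The change-of-coordinates principle for $\mod^*(S)$ then produces an $h\in\mod^*(S)$ carrying $\sigma$ to $\phi(\sigma)$, and after replacing $\phi$ by $h^{-1}\circ\phi$ we may assume $\phi$ fixes $\sigma$ pointwise.

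Third, I would introduce an ``elementary move'' graph whose vertices are maximal systems in $S$ by HBCs and HBPs, with edges connecting systems that differ in a single vertex. By Step~1 together with superinjectivity, $\phi$ is forced to respect every such elementary move: if $\sigma$ and $\sigma'$ differ in exactly one vertex $v$ versus $v'$, then $\phi(\sigma)$ and $\phi(\sigma')$ differ in exactly one vertex as well, and the constraints on topological type pin down the image of the exchanged vertex. Connectedness of this move graph then propagates the fact that $\phi$ fixes $\sigma$ to the statement that $\phi$ fixes every maximal system, and in particular every vertex of $\calcp(S)$. This gives the surjectivity of $\phi$ and completes the proof via the isomorphism $\mod^*(S)\cong\aut(\calcp(S))$.

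The hardest step will be the third one. Identifying a sufficiently flexible family of elementary exchanges between maximal systems that mix HBCs and HBPs, and then verifying connectedness of the resulting move graph, is significantly more delicate than in the pure curve-complex setting handled by Hatcher--Thurston-type arguments. The mixed vertex types of $\calcp(S)$, together with the restriction to curves that become trivial after capping off the boundary components of $S$, complicate both the enumeration of admissible moves and the connectedness argument. By contrast, Step~1 is largely bookkeeping that should be directly inherited from \cite{kida-yama}, and Step~2 is essentially formal once Step~1 and the automorphism-group computation are in place.
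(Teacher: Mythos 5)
Your reduction is the right one: by the isomorphism $\mod^*(S)\cong\aut(\calcp(S))$ from \cite{kida-yama}, the entire content of the theorem is that a superinjective self-map $\phi$ of $\calcp(S)$ is surjective on vertices, and your Step 1 (preservation of topological types of vertices) corresponds to genuine work the paper does in Sections \ref{sec-bn-b} and \ref{sec-bs-bs} --- it is not bookkeeping inherited from \cite{kida-yama}, where the maps are bijective from the outset, but at least the statements are the correct ones. The fatal problem is Step 3, and it is structural. By Proposition \ref{prop-max}, a simplex of maximal dimension in $\calcp(S)$ is the set of all $\binom{p+1}{2}$ HBPs formed from a $(p+1)$-tuple $s$ of mutually HBP-equivalent curves. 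Two consequences: first, such a simplex contains no HBCs, so a move graph on top-dimensional simplices can never certify surjectivity onto the HBC vertices; second, deleting one HBP from such a simplex leaves a collection of HBPs that still involves every curve of $s$, so the codimension-one face determines $s$ and hence determines the maximal simplex containing it uniquely. Two distinct simplices of maximal dimension therefore never share a codimension-one face: your elementary move graph has no edges at all, and the propagation argument cannot start. Even under a more liberal notion of ``elementary exchange'' the key claim that topological type ``pins down the image of the exchanged vertex'' fails, because the relevant completions come in infinite families --- for instance the HBCs and HBPs in the link of a $2$-HBP $b$ are parametrized by the curves in the $4$-holed sphere cut off by $b$, a Farey graph's worth of candidates of identical topological type.

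This is exactly where the paper has to work hardest, and surjectivity is obtained link by link rather than by normalization and moves: Propositions \ref{prop-n-hex} and \ref{prop-s-hex} use hexagons to show that $\phi$ preserves the relation $i(\cdot,\cdot)=2$ inside the link of a $2$-HBP, so the induced self-map of the Farey graph is injective and hence surjective (Lemmas \ref{lem-n2hbp-lk} and \ref{lem-s2hbp-lk}); the links of $1$-HBPs are identified with the arc complexes $\cald(X,\partial)$ and $\cald(Y)$, whose injective simplicial self-maps are shown to be surjective (Proposition \ref{prop-d-inj}); and the pieces are assembled by projecting to $\bar S$ and invoking the known surjectivity of superinjective self-maps of $\calc_n(\bar S)$ and $\calc_s(\bar S)$ when $p=2$, together with an induction on $p$ resting on the connectivity statement of Proposition \ref{prop-p3-conn}. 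Some substitute for these inputs is unavoidable: without an argument that controls \emph{all} completions of a given configuration, not just their topological type, Step 3 as written cannot be repaired.
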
 

As a by-product of the proof of this theorem, in Corollary \ref{cor-bn}, we also prove that any superinjective map from the subcomplex $\calcp_n(S)$ of $\calcp(S)$, called the complex of HBCs and non-separating HBPs for $S$, into $\calcp(S)$ is induced by an element of $\mod^*(S)$.
Combining the above theorem with \cite[Theorem 7.13 (i)]{kida-yama} and applying argument in \cite[Section 3]{iva-aut}, we obtain the following:

\begin{cor}\label{cor-cohop}
Let $S$ be the surface in Theorem \ref{thm-b}.
Then for any finite index subgroup $\Gamma$ of $P(S)$ and any injective homomorphism $f\colon \Gamma \to P(S)$, there exists a unique element $\gamma \in \mod^*(S)$ with $f(x)=\gamma x\gamma^{-1}$ for any $x\in \Gamma$.
In particular, $\Gamma$ is co-Hopfian.
\end{cor}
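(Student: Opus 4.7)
The plan is to chain three ingredients: Theorem 7.13 (i) of \cite{kida-yama}, which converts $f$ into a superinjective self-map of $\calcp(S)$; Theorem \ref{thm-b} above, which realises such a map by an element of $\mod^*(S)$; and an Ivanov-type bookkeeping argument in the spirit of \cite[Sec.~3]{iva-aut} that upgrades this geometric coincidence to an equality of homomorphisms.

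Concretely, I would first apply Theorem 7.13 (i) of \cite{kida-yama} to obtain a superinjective map $\Phi\colon \calcp(S)\to \calcp(S)$ that is $f$-equivariant, in the sense that $\Phi(x\cdot \alpha)=f(x)\cdot \Phi(\alpha)$ for every $x\in \Gamma$ and every vertex $\alpha$ of $\calcp(S)$. By Theorem \ref{thm-b} there exists $\gamma\in \mod^*(S)$ with $\Phi(\alpha)=\gamma\cdot \alpha$ for all vertices $\alpha$. Feeding this into the equivariance identity gives $\gamma(x\cdot \alpha)=f(x)\gamma\cdot\alpha$, i.e.\ $(\gamma^{-1}f(x)\gamma)\cdot \alpha=x\cdot \alpha$ for every $\alpha$. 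Since the natural map $\mod^*(S)\to \aut(\calcp(S))$ is an isomorphism (as recalled in the introduction), it is in particular faithful, and I conclude $f(x)=\gamma x\gamma^{-1}$ for every $x\in \Gamma$. Uniqueness of $\gamma$ reduces to the same faithfulness together with the standard fact that no nontrivial element of $\mod^*(S)$ centralises a finite index subgroup of $P(S)$, which is a curve-configuration argument of the kind carried out in \cite[Sec.~3]{iva-aut} (one exhibits sufficiently many Dehn twist powers inside $\Gamma$ whose simultaneous centraliser is trivial).

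To deduce the co-Hopfian property I take $f\colon \Gamma\to \Gamma$ injective, compose with the inclusion $\Gamma\hookrightarrow P(S)$, and apply the previous step to produce $\gamma\in \mod^*(S)$ with $f(x)=\gamma x\gamma^{-1}$. Because $P(S)$ is normal in $\mod^*(S)$ --- being intrinsically described as the subgroup of $\pmod(S)$ consisting of classes that cap off to the identity on $\bar{S}$ --- conjugation by $\gamma$ is an automorphism of $P(S)$, so $f(\Gamma)=\gamma\Gamma\gamma^{-1}$ has the same finite index in $P(S)$ as $\Gamma$. Combined with $f(\Gamma)\subseteq \Gamma$, this forces $f(\Gamma)=\Gamma$, i.e.\ $f$ is surjective.

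The main obstacle, and indeed the content of the present paper, is Theorem \ref{thm-b}: the passage from superinjectivity of a self-map of $\calcp(S)$ to its being induced by an element of $\mod^*(S)$. Once that is in hand, the remaining steps --- the equivariance of the induced map supplied by \cite{kida-yama}, faithfulness of the action on $\calcp(S)$, normality of $P(S)$ in $\mod^*(S)$, and a bare index count --- amount only to bookkeeping.
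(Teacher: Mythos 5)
Your proposal is correct and follows exactly the route the paper takes: the paper derives Corollary \ref{cor-cohop} by combining Theorem \ref{thm-b} with Theorem 7.13 (i) of \cite{kida-yama} and the argument of Section 3 of \cite{iva-aut}, which is precisely your chain of an $f$-equivariant superinjective self-map of $\calcp(S)$, its realisation by $\gamma\in\mod^*(S)$, faithfulness of the action on $\calcp(S)$, triviality of centralisers of finite index subgroups for uniqueness, and the normality-plus-index count for surjectivity. No discrepancies to report.
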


Parallel results are proved for the subgroup $P_s(S)$ of $P(S)$ and the subcomplex $\calcp_s(S)$ of $\calcp(S)$, called the complex of HBCs and separating HBPs for $S$, that are precisely defined in Section \ref{subsec-cpx}.
Let us summarize the results on them.

\begin{thm}\label{thm-bs}
Let $S$ be the surface in Theorem \ref{thm-b}.
Then any superinjective map from $\calcp_s(S)$ into itself is induced by an element of $\mod^*(S)$.
\end{thm}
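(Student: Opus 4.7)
The plan is to derive Theorem \ref{thm-bs} from Theorem \ref{thm-b} by extending an arbitrary superinjective self-map $\phi\colon \calcp_s(S)\to \calcp_s(S)$ to a superinjective self-map $\tilde\phi$ of the full complex $\calcp(S)$, and then invoking Theorem \ref{thm-b} to produce a mapping class inducing $\tilde\phi$, hence $\phi$.

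First I would show that $\phi$ preserves the two vertex types of $\calcp_s(S)$, sending HBCs to HBCs and separating HBPs to separating HBPs. Such a dichotomy is typically detected by a purely combinatorial invariant (a link property, or a count of maximal disjoint configurations of a given type) that any superinjective map must respect, and should parallel the corresponding step in the proof of Theorem \ref{thm-b}. At the same time one should check that $\phi$ respects the intersection/disjointness relations used to detect the topological type of the subsurfaces cut off by separating HBPs, so that the ``combinatorial topology'' visible inside $\calcp_s(S)$ is transported faithfully.

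The main obstacle is extending $\phi$ over the non-separating HBP vertices of $\calcp(S)$. The idea is to associate to each non-separating HBP $p$ a combinatorial fingerprint inside $\calcp_s(S)$: namely the HBC $c_p$ obtained as the boundary of a regular neighborhood of the two curves of $p$ (which is separating because the union of the two curves separates $S$), together with the collection of HBCs and separating HBPs disjoint from $p$. I would then define $\tilde\phi(p)$ to be the unique non-separating HBP whose fingerprint is the $\phi$-image of the fingerprint of $p$. Proving well-definedness---that a non-separating HBP is uniquely reconstructable from this separating data and that the reconstruction is compatible with superinjectivity---is the delicate point, since non-separating HBPs are invisible to $\calcp_s(S)$ and one must characterize them combinatorially through their relationship with separating configurations. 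In particular, one needs a uniqueness statement: two distinct non-separating HBPs must have distinguishable fingerprints, which should follow from the richness of separating HBPs in the complement once both the genus and the number of boundary components are at least two.

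After checking that the extension $\tilde\phi$ is superinjective on $\calcp(S)$ (disjointness involving the newly defined vertices reducing, via the extension recipe, to disjointness statements among separating data already controlled by $\phi$), I would apply Theorem \ref{thm-b} to obtain a unique $\gamma\in\mod^*(S)$ inducing $\tilde\phi$. Its restriction to $\calcp_s(S)$ then equals $\phi$, and uniqueness of $\gamma$ follows from the faithfulness of the $\mod^*(S)$-action on $\calcp_s(S)$, which itself can be extracted from the corresponding faithfulness on $\calcp(S)$ since $\calcp_s(S)$ already contains all HBCs.
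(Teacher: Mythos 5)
Your plan is to deduce Theorem \ref{thm-bs} from Theorem \ref{thm-b} by extending $\phi$ over the non-separating HBP vertices. Inside this paper that reduction is circular: Theorem \ref{thm-b} is itself obtained by proving that superinjective self-maps of $\calcp(S)$ are surjective, and the final step of that argument (the corollary following Theorem \ref{thm-surj-p3}) explicitly invokes Theorems \ref{thm-bs-surj} and \ref{thm-surj-p3}, i.e.\ the surjectivity of superinjective self-maps of $\calcp_s(S)$ --- which is precisely the hard content of Theorem \ref{thm-bs}. So you cannot take Theorem \ref{thm-b} as an available black box here; the paper's logical order runs in the opposite direction. The actual proof proceeds intrinsically in $\calcp_s(S)$: Section \ref{sec-bs-bs} shows $\psi$ preserves $j$-HBCs and $k$-HBPs, Sections \ref{sec-bs} and \ref{sec-d} control the links of 2-HBPs (via hexagons and Farey graphs) and of 1-HBPs (via the arc complexes $\cald(X,\partial)$, $\cald(Y)$), Section \ref{sec-surj} deduces surjectivity fiberwise over $\calc^*(\bar S)$ by reducing to the known rigidity of $\calc_s(\bar S)$ (or of $\calc_s(S)$ when $g=2$) from \cite{kida-cohop}, and then the automorphism computation of \cite{kida-yama} identifies $\psi$ with a mapping class.

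There are also concrete problems with the extension recipe itself. The ``HBC $c_p$'' is not defined: the two curves of a non-separating HBP are disjoint, so the boundary of a regular neighborhood of their union consists of four curves isotopic in pairs to the original two, none of which is an HBC; and for a non-separating 1-HBP the genus-zero complementary component is a pair of pants containing no essential curve at all, so there is no HBC naturally attached to $p$ on that side. More fundamentally, even the robust fingerprint (the set of all vertices of $\calcp_s(S)$ disjoint from $p$) cannot be transported by $\phi$ without already knowing that $\phi$ carries this set \emph{onto} the corresponding set for some non-separating HBP $q$; if $\phi$ is not yet known to be surjective, you only get containment in such a set, and the uniqueness of $q$ with $\phi(F(p))\subset F(q)$ is exactly the kind of statement whose proof occupies Sections \ref{sec-bs-bs}--\ref{sec-surj}. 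In short, the proposal defers the surjectivity problem rather than solving it.
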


\begin{cor}
Let $S$ be the surface in Theorem \ref{thm-b}.
Then for any finite index subgroup $\Lambda$ of $P_s(S)$ and any injective homomorphism $h\colon \Lambda \to P_s(S)$, there exists a unique element $\lambda \in \mod^*(S)$ with $h(y)=\lambda y\lambda^{-1}$ for any $y\in \Lambda$.
In particular, $\Lambda$ is co-Hopfian.
\end{cor}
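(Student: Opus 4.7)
The plan is to mimic the proof of Corollary~\ref{cor-cohop} with the substitutions $P(S) \to P_s(S)$, $\Gamma \to \Lambda$, $\calcp(S) \to \calcp_s(S)$, and Theorem~\ref{thm-b} $\to$ Theorem~\ref{thm-bs}. The argument should split into three ingredients, only the first of which requires new content.

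First, I would invoke a separating counterpart of Theorem 7.13~(i) of \cite{kida-yama}: any injective homomorphism $h\colon \Lambda \to P_s(S)$ from a finite index subgroup $\Lambda$ induces a superinjective simplicial self-map $h_*\colon \calcp_s(S) \to \calcp_s(S)$. The proof would follow the pattern of the non-separating case, recognizing Dehn twists about separating HBCs and multi-twists about separating HBPs intrinsically inside $P_s(S)$ after passing to suitable powers lying in $\Lambda$, and using the injectivity of $h$ to transport their disjointness relations into $\calcp_s(S)$. Since the introduction advertises ``parallel results'' for $P_s(S)$, one should regard this step as an input from \cite{kida-yama}.

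With $h_*$ in place, Theorem~\ref{thm-bs} produces $\lambda \in \mod^*(S)$ whose induced action on $\calcp_s(S)$ equals $h_*$. Following Section~3 of \cite{iva-aut}, for each $y \in \Lambda$ and each vertex $v$ of $\calcp_s(S)$ one combines
\[
h(y)\lambda \cdot v \;=\; h(y)\cdot h_*(v) \;=\; h_*(y\cdot v) \;=\; \lambda y \cdot v,
\]
so that $\lambda^{-1} h(y) \lambda \cdot v = y \cdot v$ for every vertex $v$. Faithfulness of the $\mod^*(S)$-action on $\calcp_s(S)$ then yields $h(y) = \lambda y \lambda^{-1}$, and the same faithfulness gives the uniqueness of $\lambda$. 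For co-Hopfianness, if $h(\Lambda) \subseteq \Lambda$ then $\lambda\Lambda\lambda^{-1}\subseteq\Lambda$; since $P_s(S)$ is preserved by $\mod^*(S)$-conjugation, $\lambda\Lambda\lambda^{-1}$ has the same finite index in $P_s(S)$ as $\Lambda$, forcing equality.

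The principal obstacle is the first ingredient; granted the separating analog of Theorem 7.13~(i) of \cite{kida-yama}, the remainder of the proof transcribes that of Corollary~\ref{cor-cohop} verbatim.
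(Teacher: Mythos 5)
Your proposal is correct and follows exactly the paper's route: the "separating counterpart of Theorem 7.13 (i)" you posit is precisely Theorem 7.13 (ii) of \cite{kida-yama}, which the paper combines with Theorem \ref{thm-bs} and the argument of Section 3 of \cite{iva-aut}, just as you do. The equivariance computation, the appeal to faithfulness of the $\mod^*(S)$-action, and the index argument for co-Hopfianness all match the paper's (terse) proof and its template in Corollary \ref{cor-cohop} and Section \ref{sec-cor}.
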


This corollary is obtained by combining the last theorem with \cite[Theorem 7.13 (ii)]{kida-yama}.
If the genus of a surface $S$ is equal to 0, then we have the equalities $P(S)=P_s(S)=\pmod(S)$ and $\calcp(S)=\calcp_s(S)=\calc(S)$.
If the genus of $S$ is equal to 1, then we have the equalities $P(S)=\mathcal{I}(S)$ and $P_s(S)=\mathcal{K}(S)$, where $\mathcal{I}(S)$ is the Torelli group for $S$ and $\mathcal{K}(S)$ is the Johnson kernel for $S$.
Moreover, $\calcp(S)$ and $\calcp_s(S)$ are equal to the Torelli complex for $S$ and the complex of separating curves for $S$, respectively.
We refer to \cite{kida-tor} for a definition of these groups and complexes.
It therefore follows from \cite{bm-ar} and \cite{kida-cohop} that if $S$ is a surface with the genus less than 2 and the Euler characteristic less than $-2$, then the same conclusions for $S$ as in the above theorems hold.
The co-Hopfian property of the braid groups on the disk, which are central extensions of the mapping class groups of holed spheres, is discussed in \cite{bm-br} and \cite{bellingeri}.

For a positive integer $n$ and a manifold $M$, we define $B_n(M)$ as the {\it braid group} of $n$ strands on $M$, i.e., the fundamental group of the space of non-ordered distinct $n$ points in $M$.
We also define $PB_n(M)$ as the {\it pure braid group} of $n$ strands on $M$, i.e., the fundamental group of the space of ordered distinct $n$ points in $M$.
The group $PB_n(M)$ is naturally identified with a subgroup of $B_n(M)$ of index $n!$.
We refer to \cite{birman} and \cite{pr} for fundamental facts on these groups.

Let $S$ be a surface of genus at least 2 with $p$ boundary components.
The kernel of the homomorphism from $\mod(S)$ onto $\mod(\bar{S})$ associated with the inclusion of $S$ into $\bar{S}$ is then identified with $B_p(\bar{S})$ (see \cite[Theorem 4.3]{birman}).
Under this identification, $P(S)$ is identified with $PB_p(\bar{S})$.
As a consequence of Corollary \ref{cor-cohop}, we obtain the following corollary, which answers to \cite[Question 4]{bm-br} affirmatively for closed orientable surfaces of genus at least 2.

\begin{cor}\label{cor-braid}
Let $n$ be an integer at least 2.
Let $M$ be a connected, closed and orientable surface of genus at least 2.
Then any finite index subgroup of $B_n(M)$ is co-Hopfian.
\end{cor}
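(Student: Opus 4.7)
The plan is to reduce the corollary to the pure braid case handled by Corollary \ref{cor-cohop}. Choose a compact orientable surface $S$ of the same genus as $M$ with exactly $n$ boundary components, so that $\bar S = M$; then the identifications recalled in the excerpt give $P(S)\cong PB_n(M)$, and the homomorphism $\mod^*(S)\to \mod^*(M)$ obtained by filling in the boundary disks exhibits $B_n(M)$ as a normal subgroup of $\mod^*(S)$.

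Given a finite index subgroup $\Gamma \le B_n(M)$ and an injective homomorphism $f\colon \Gamma \to \Gamma$, I would first pass to a subgroup where the pure braid hypothesis applies. Because $PB_n(M)$ has finite index in $B_n(M)$ and $f$ is injective, the intersection $\Gamma \cap PB_n(M) \cap f^{-1}(PB_n(M))$ has finite index in $\Gamma$; replacing it by its normal core in $\Gamma$ produces a finite index normal subgroup $\Gamma_0 \triangleleft \Gamma$ that lies in $PB_n(M)\cong P(S)$ and satisfies $f(\Gamma_0)\subset P(S)$. Applying Corollary \ref{cor-cohop} to $f|_{\Gamma_0}$ then yields a unique $\gamma \in \mod^*(S)$ with $f(x)=\gamma x \gamma^{-1}$ for every $x \in \Gamma_0$.

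Next I would propagate this conjugation formula to all of $\Gamma$. For $y\in \Gamma$ and $x\in \Gamma_0$ the element $yxy^{-1}$ lies in $\Gamma_0$ by normality, so applying $f$ in two different ways gives $\gamma y x y^{-1}\gamma^{-1} = f(y)\gamma x \gamma^{-1} f(y)^{-1}$, showing that $y^{-1}\gamma^{-1} f(y)\gamma$ centralizes $\Gamma_0$ inside $\mod^*(S)$. The main obstacle I anticipate is verifying that $C_{\mod^*(S)}(\Gamma_0)$ is trivial; I would deduce this from the centerlessness of $P(S)$ for genus $\ge 2$ together with the standard fact that centralizers of infinite order mapping classes are preserved under passage to nonzero powers, giving $C_{\mod^*(S)}(\Gamma_0)=C_{\mod^*(S)}(P(S))=\{1\}$. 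Granted this, $f(y)=\gamma y \gamma^{-1}$ for every $y\in \Gamma$, and normality of $B_n(M)$ in $\mod^*(S)$ forces $\gamma \Gamma \gamma^{-1}$ to have the same finite index in $B_n(M)$ as $\Gamma$; combined with the inclusion $f(\Gamma)=\gamma \Gamma \gamma^{-1}\subset \Gamma$, this yields $\gamma \Gamma \gamma^{-1}=\Gamma$, so $f$ is surjective.
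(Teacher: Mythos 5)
Your proposal is correct and follows essentially the same route as the paper: cut down to a finite index subgroup $\Gamma_0$ of $P(S)$ to which Corollary \ref{cor-cohop} applies, propagate the resulting conjugation formula from $\Gamma_0$ to all of $\Gamma$, and finish with the index count using normality of $B_n(M)$ in $\mod^*(S)$. The one caveat is your justification of the centralizer step: triviality of $C_{\mod^*(S)}(P(S))$ does not follow from centerlessness of $P(S)$ alone (which only controls the centralizer inside $P(S)$); the ingredient actually needed, and the one the paper invokes, is the faithfulness of the $\mod^*(S)$-action on $\calcp(S)$ (Lemma 2.2 of \cite{kida-yama}), applied after observing that an element commuting with all powers of $t_{\alpha}$ for HBCs $\alpha$ and of $t_{\beta}t_{\gamma}^{-1}$ for HBPs $\{\beta,\gamma\}$ must fix every vertex of $\calcp(S)$.
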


The proof of this corollary is presented in Section \ref{sec-cor}.
In the next section, we introduce notation and terminology employed throughout the paper, and recall basic properties of superinjective maps defined on $\calcp(S)$ etc. 
Afterward, we present an organization of the paper, outlining the proof of Theorems \ref{thm-b} and \ref{thm-bs}.


\section{Preliminaries}\label{sec-pre}

\subsection{Notation and terminology}\label{subsec-term}

Unless otherwise stated, we assume that a surface is connected, compact and orientable. 
Let $S=S_{g, p}$ be a surface of genus $g$ with $p$ boundary components, and let $\partial S$ denote the boundary of $S$. 
A simple closed curve in $S$ is called {\it essential} in $S$ if it is neither homotopic to a single point of $S$ nor isotopic to a boundary component of $S$. 
Let $V(S)$ denote the set of isotopy classes of essential simple closed curves in $S$. 
When there is no confusion, we mean by a curve in $S$ either an essential simple closed curve in $S$ or its isotopy class. 
An essential simple closed curve $\alpha$ in $S$ is called {\it non-separating} in $S$ if $S\setminus \alpha$ is connected, and otherwise $\alpha$ is called {\it separating} in $S$.
These properties depend only on the isotopy class of $\alpha$.
For a separating curve $\alpha$ in $S$ and two components $\partial_1$, $\partial_2$ of $\partial S$, we say that $\alpha$ {\it separates} $\partial_1$ and $\partial_2$ if $\partial_1$ and $\partial_2$ are contained in distinct components of $S\setminus \alpha$.

Let $i\colon V(S)\times V(S)\to \mathbb{Z}_{\geq 0}$ denote the {\it geometric intersection number}, i.e., the minimal cardinality of the intersection of representatives for two elements of $V(S)$.
If $a$ and $b$ are essential simple closed curves in $S$ with $|a\cap b|=i([a], [b])$, where $[a]$ and $[b]$ denote the isotopy classes of $a$ and $b$, respectively, then we say that $a$ and $b$ {\it intersect minimally}.
Let $\Sigma(S)$ denote the set of non-empty finite subsets $\sigma$ of $V(S)$ with $i(\alpha, \beta)=0$ for any two elements $\alpha, \beta \in \sigma$.
We extend $i$ to the symmetric function on $(V(S)\sqcup \Sigma(S))^2$ so that $i(\alpha, \sigma)=\sum_{\beta \in \sigma}i(\alpha, \beta)$ and $i(\sigma, \tau)=\sum_{\beta \in \sigma, \gamma \in \tau}i(\beta, \gamma)$ for any $\alpha \in V(S)$ and $\sigma, \tau \in \Sigma(S)$.
We say that two elements $\sigma$, $\tau$ of $V(S)\sqcup \Sigma(S)$ are {\it disjoint} if $i(\sigma, \tau)=0$, and otherwise we say that they {\it intersect}.
We say that two elements $\alpha$, $\beta$ of $V(S)$ {\it fill} $S$ if there exists no element of $V(S)$ disjoint from both $\alpha$ and $\beta$.

For $\sigma \in \Sigma(S)$, we denote by $S_{\sigma}$ the surface obtained by cutting $S$ along all curves in $\sigma$.
When $\sigma$ consists of a single curve $\alpha$, we denote it by $S_{\alpha}$ for simplicity.
We often identify a component of $S_{\sigma}$ with a complementary component of a tubular neighborhood of a one-dimensional submanifold representing $\sigma$ in $S$ if there is no confusion.
If $Q$ is a component of $S_{\sigma}$, then $V(Q)$ is naturally identified with a subset of $V(S)$.

Suppose that the boundary $\partial S$ of $S$ is non-empty.
We say that a simple arc $l$ in $S$ is {\it essential} in $S$ if
\begin{itemize}
\item $\partial l$ consists of two distinct points of $\partial S$;
\item $l$ meets $\partial S$ only at its end points; and
\item $l$ is not isotopic relative to $\partial l$ to an arc in $\partial S$.
\end{itemize}
Unless otherwise stated, isotopy of essential simple arcs in $S$ may move their end points, keeping them staying in $\partial S$.
An essential simple arc $l$ in $S$ is called {\it separating} in $S$ if $S\setminus l$ is not connected.
Otherwise, $l$ is called {\it non-separating} in $S$.
These properties depend only on the isotopy class of $l$.
Let $\partial_1$ and $\partial_2$ be distinct components of $\partial S$.
We say that an essential simple arc $l$ in $S$ {\it connects} $\partial_1$ {\it and} $\partial_2$ (or {\it connects} $\partial_1$ {\it with} $\partial_2$) if one of the end point of $l$ lies in $\partial_1$ and another in $\partial_2$.

\subsection{Complexes and groups associated to surfaces}\label{subsec-cpx}

The following simplicial complex was introduced by Harvey \cite{harvey}.

\medskip

\noindent {\bf Complex $\calc(S)$.} This is defined as the abstract simplicial complex such that the sets of vertices and simplices of $\calc(S)$ are $V(S)$ and $\Sigma(S)$, respectively.
The complex $\calc(S)$ is called the {\it complex of curves} for $S$.

\medskip

We denote by $\bar{S}$ the closed surface obtained from $S$ by attaching disks to all boundary components of $S$. 
Let $\calc^*(\bar{S})$ be the simplicial cone over $\calc(\bar{S})$ with its cone point $\ast$.
Namely, $\calc^*(\bar{S})$ is the abstract simplicial complex such that the set of vertices is the disjoint union $V(\bar{S})\sqcup \{ \ast \}$, and the set of simplices is
\[\Sigma(\bar{S})\cup \{\, \sigma \cup \{ \ast \} \mid \sigma \in \Sigma(\bar{S})\cup \{ \emptyset \}\,\}.\]
We then have the simplicial map
\[\pi \colon \calc(S)\to \calc^*(\bar{S})\]
associated with the inclusion of $S$ into $\bar{S}$.
Note that $\pi^{-1}(\ast)$ consists of all separating curves in $S$ cutting off a holed sphere from $S$.

\medskip

\noindent {\bf Hole-bounding curves (HBC).} A curve $\alpha$ in $S$ is called a {\it hole-bounding curve (HBC)} in $S$ if $\alpha$ lies in $\pi^{-1}(\ast)$. 
When the genus of $S$ is positive and the holed sphere cut off by $\alpha$ contains exactly $k$ components of $\partial S$, we call $\alpha$ a {\it $k$-HBC} in $S$.
\begin{figure}
\begin{center}
\includegraphics[width=8cm]{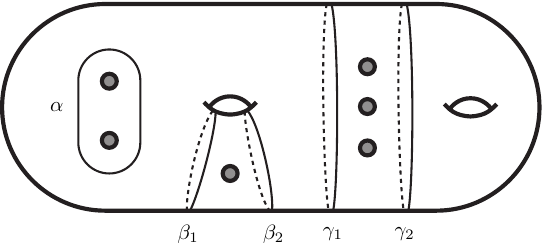}
\caption{The curve $\alpha$ is a 2-HBC, the pair $\{ \beta_1, \beta_2\}$ is a non-separating 1-HBP, and the pair $\{ \gamma_1, \gamma_2\}$ is a separating 3-HBP.}\label{fig-exhbchbp}
\end{center}
\end{figure}
We have $2\leq k\leq p$ (see Figure \ref{fig-exhbchbp}).

\medskip

If $\alpha$ is a $k$-HBC in $S$ and $\partial_1,\ldots, \partial_k$ are the components of $\partial S$ contained in the holed sphere cut off by $\alpha$, then we say that $\alpha$ {\it encircles} $\partial_1,\ldots, \partial_k$.

Let $\alpha$ be a 2-HBC in $S$ encircling two components $\partial_1$, $\partial_2$ of $\partial S$.
Up to isotopy, there exists a unique essential simple arc in $S$ connecting $\partial_1$ with $\partial_2$ and disjoint from $\alpha$.
This arc is called the {\it defining arc} of $\alpha$.
Conversely, if we have $g\geq 1$ and $p\geq 2$ and if $l$ is an essential simple arc in $S$ connecting two distinct components $\partial_1$, $\partial_2$ of $\partial S$, then the boundary component of a regular neighborhood of the union $l\cup \partial_1\cup \partial_2$ is a 2-HBC in $S$.
The 2-HBC is then called the curve in $S$ {\it defined by} $l$.

\medskip

\noindent {\bf Hole-bounding pairs (HBP).} A pair $\{ \alpha, \beta \}$ of curves in $S$ is called a {\it hole-bounding pair (HBP)} in $S$ if $\{ \alpha, \beta \}$ is an edge of $\calc(S)$ and we have $\pi(\alpha)=\pi(\beta)\neq \ast$.
We note that there exists a unique component of $S_{\{ \alpha, \beta\}}$ of genus 0 if the genus of $S$ is at least 2.
In this case, if that component contains exactly $k$ components of $\partial S$, then we call the pair $\{ \alpha, \beta \}$ a {\it $k$-HBP} in $S$.
We have $1\leq k\leq p$ (see Figure \ref{fig-exhbchbp}).

An HBP in $S$ is called {\it non-separating} if both curves in it are non-separating in $S$, and it is called {\it separating} if both curves in it are separating in $S$.
Any HBP in $S$ is either separating or non-separating.
Two disjoint HBPs $a$, $b$ in $S$ are called {\it equivalent} in $S$ if $\pi(a)=\pi(b)$.
Two disjoint curves $\alpha$, $\beta$ in $S$ are called {\it HBP-equivalent} in $S$ if $\pi(\alpha)=\pi(\beta)$.

\medskip

\noindent {\bf Complexes $\calcp(S)$, $\calcp_n(S)$ and $\calcp_s(S)$.} Let $V_c(S)$ be the set of all isotopy classes of HBCs in $S$, and let $V_p(S)$ be the set of all isotopy classes of HBPs in $S$. 
We define $\calcp(S)$ as the abstract simplicial complex such that the set of vertices is the disjoint union $V_c(S)\sqcup V_p(S)$, and a non-empty finite subset $\sigma$ of $V_c(S)\sqcup V_p(S)$ is a simplex of $\calcp(S)$ if and only if any two elements of $\sigma$ are disjoint.

We denote by $V_{np}(S)$ and $V_{sp}(S)$ the subsets of $V_p(S)$ consisting of all non-separating HBPs and all separating HBPs, respectively.
We define $\calcp_n(S)$ and $\calcp_s(S)$ as the full subcomplexes of $\calcp(S)$ spanned by $V_c(S)\sqcup V_{np}(S)$ and $V_c(S)\sqcup V_{sp}(S)$, respectively.

\medskip

\noindent {\bf Mapping class groups.} The {\it extended mapping class group} $\mod^*(S)$ of $S$ is the group of isotopy classes of homeomorphisms from $S$ onto itself, where isotopy may move points in $\partial S$. 
The {\it mapping class group} $\mod(S)$ of $S$ is the group of isotopy classes of orientation-preserving homeomorphisms from $S$ onto itself.
The {\it pure mapping class group} $\pmod(S)$ of $S$ is the group of isotopy classes of orientation-preserving homeomorphisms from $S$ onto itself that fix each component of $\partial S$ as a set.
Both $\mod(S)$ and $\pmod(S)$ are finite index subgroups of $\mod^*(S)$.
For $\alpha \in V(S)$, we denote by $t_{\alpha}\in \pmod(S)$ the {\it (left) Dehn twist} about $\alpha$.

\medskip

\noindent {\bf Surface braid groups.} We have the surjective homomorphism
\[\iota \colon \pmod(S)\to \mod(\bar{S})\]
associated with the inclusion of $S$ into $\bar{S}$.
We define $P(S)$ to be $\ker \iota$. 
By the Birman exact sequence, $P(S)$ is generated by all elements of the forms $t_{\alpha}$ and $t_{\beta}t_{\gamma}^{-1}$ with $\alpha \in V_c(S)$ and $\{ \beta, \gamma \}\in V_p(S)$ (see \cite[Section 4.1]{birman}). 
We define $P_s(S)$ as the group generated by all elements of the forms $t_{\alpha}$ and $t_{\beta}t_{\gamma}^{-1}$ with $\alpha \in V_c(S)$ and $\{ \beta, \gamma \} \in V_{sp}(S)$.
The group $P_s(S)$ is of infinite index in $P(S)$ because no non-zero power of $t_{\delta}t_{\epsilon}^{-1}$ with $\{ \delta, \epsilon \}\in V_{np}(S)$ belongs to $P_s(S)$ by \cite[Lemma 2.3]{kida-yama}.


\subsection{Superinjective maps}\label{subsec-si}

We review basic properties of superinjective maps defined between simplicial complexes introduced in the previous subsection.

\begin{defn}
Let $S$ be a surface.
Let $X$ and $Y$ be any of the simplicial complexes, $\calc(S)$, $\calcp(S)$, $\calcp_n(S)$ and $\calcp_s(S)$.  
We mean by a {\it superinjective map} $\phi \colon X\to Y$ a simplicial map $\phi \colon X\to Y$ satisfying $i(\phi(a), \phi(b))\neq 0$ for any two vertices $a$, $b$ of $X$ with $i(a, b)\neq 0$.
\end{defn}

Let $V(X)$ and $V(Y)$ denote the sets of vertices of $X$ and $Y$, respectively.
We note that a map $\phi \colon V(X)\to V(Y)$ defines a simplicial map from $X$ into $Y$ if and only if $i(\phi(a), \phi(b))=0$ for any two vertices $a, b\in V(X)$ with $i(a, b)=0$.
It can be checked that any superinjective map from $X$ into $Y$ is injective, and that for any superinjective map $\phi \colon X\to Y$, if the induced map from $V(X)$ into $V(Y)$ is surjective, then $\phi$ is a simplicial isomorphism from $X$ onto $Y$.

\begin{lem}\label{lem-hbp}
Let $S=S_{g, p}$ be a surface with $g\geq 2$ and $p\geq 2$.
Let $\phi \colon \calcp_n(S)\to \calcp(S)$ and $\psi \colon \calcp_s(S)\to \calcp(S)$ be superinjective maps.
Then $\phi$ sends each non-separating HBP to an HBP, and $\psi$ sends each separating HBP to an HBP.
\end{lem}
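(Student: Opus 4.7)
The plan is to prove the lemma by contradiction. Since a vertex of $\calcp(S)$ is either an HBC or an HBP, what must be excluded is that $\phi$ sends some non-separating HBP to an HBC, and that $\psi$ sends some separating HBP to an HBC. I will focus on $\phi$; the argument for $\psi$ will be parallel. Suppose, for contradiction, that $\phi(a)=c$ for some non-separating $k$-HBP $a=\{\alpha,\beta\}\in V_{np}(S)$ and some HBC $c\in V_c(S)$.

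The strategy is to exhibit a combinatorial configuration in the star of $a$ in $\calcp_n(S)$ whose intersection/disjointness pattern can be realized around a non-separating HBP but not around any HBC, and then transport it by $\phi$ using superinjectivity. First I would record the topology. Cutting $S$ along $\alpha\cup\beta$ yields a $(k+2)$-holed sphere together with a subsurface $R_a$ of genus $g-1\geq 1$ with $p-k+2$ boundary components, so $R_a$ still supports essential non-separating simple closed curves. By contrast, cutting $S$ along an HBC $c$ yields a holed sphere, which contains no essential non-separating curves, together with a \emph{single} connected subsurface $R_c$ of genus $g$. Hence every non-separating curve of $S$ disjoint from $c$ must lie entirely in $R_c$.

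Next I would construct the witnesses: two non-separating HBPs $a_1,a_2\in V_{np}(S)$ disjoint from $a$ and possibly one or two HBCs $c_1,c_2\in V_c(S)$ in the link of $a$, arranged so that (i) one pair among $\{a_1,a_2\}$ shares a non-separating curve lying in $R_a$, and (ii) the full pattern of disjointness and intersection among $\{a,a_1,a_2,c_1,c_2\}$ records the fact that $a$ is bounded on one side by the \emph{two} distinct curves $\alpha,\beta$ rather than by a single curve. Because $\phi$ is simplicial and superinjective, this intersection pattern is preserved by $\phi$, so the images $\phi(a_1),\phi(a_2),\phi(c_1),\phi(c_2)$ must realize the same pattern in the link of $c$ in $\calcp(S)$. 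Using that every non-separating HBP disjoint from $c$ lies in $R_c$, while any HBC or HBP in the link of $c$ must avoid the holed-sphere side, one finds that no two-sided arrangement around $c$ can mimic the two-sided arrangement that $a_1,a_2$ enjoyed around $a$, and the contradiction follows.

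For $\psi\colon\calcp_s(S)\to\calcp(S)$, the same strategy applies with separating HBPs in place of non-separating ones, using that a separating HBP also decomposes $S$ into a holed sphere and a complementary subsurface whose structure is incompatible with the genus-$g$ complement of any HBC. The main obstacle is step three: one must make the combinatorial signature of an HBP (as opposed to an HBC) explicit and verify that it cannot be matched in the link of \emph{any} HBC. This likely requires first reducing $a$ (or its image) to a canonical form using the $\mod^*(S)$-action on $\calcp(S)$ recorded in \cite{kida-yama}, then handling the extremal parameters ($k=p$, or $k=1$) as separate small cases.
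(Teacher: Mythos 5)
Your proposal is a plan rather than a proof, and the plan leaves exactly the hard part undone. The entire content of the lemma is the existence of a configuration of vertices whose \emph{intersection pattern alone} can be realized in the star of an HBP but not in the star of any HBC; you describe what such a configuration should accomplish ("arranged so that\dots", "one finds that no two-sided arrangement around $c$ can mimic\dots") but never exhibit it or verify non-realizability, and you acknowledge this yourself in the final paragraph. Note also that a superinjective map only preserves the disjointness/intersection pattern, not auxiliary data such as "shares a curve lying in $R_a$", so condition (i) of your step is not transported by $\phi$ unless it is first encoded purely in terms of $i(\cdot,\cdot)=0$ versus $i(\cdot,\cdot)\neq 0$ --- which is precisely the missing encoding. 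A smaller inaccuracy: it is not true that every HBC in the link of an HBC $c$ avoids the holed sphere cut off by $c$; when $c$ is a $k$-HBC with $k\geq 3$, that holed sphere contains $j$-HBCs for $2\leq j<k$.

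The paper's own proof is much shorter and avoids any local configuration analysis: by Proposition \ref{prop-max}, the complexes $\calcp(S)$, $\calcp_n(S)$ and $\calcp_s(S)$ all have the same dimension $\binom{p+1}{2}-1$, and every simplex of $\calcp(S)$ of maximal dimension consists entirely of HBPs (namely, all pairs drawn from $p+1$ mutually HBP-equivalent curves). Any non-separating (resp.\ separating) HBP is contained in a simplex of maximal dimension of $\calcp_n(S)$ (resp.\ $\calcp_s(S)$); since $\phi$ (resp.\ $\psi$) is injective and simplicial, the image of that simplex is a simplex of $\calcp(S)$ of the same, hence maximal, dimension, and therefore consists only of HBPs. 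If you want to salvage your approach, the cleanest "combinatorial signature" is exactly this one --- membership in a clique of size $\binom{p+1}{2}$ --- rather than an ad hoc pentagon-style pattern.
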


This lemma is a direct consequence of the following proposition describing simplices of $\calcp(S)$ of maximal dimension (see Figure \ref{fig-max} for such simplices).
\begin{figure}
\begin{center}
\includegraphics[width=12cm]{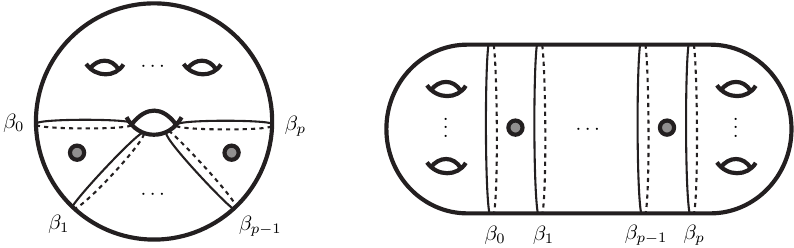}
\caption{Simplices of $\calcp(S)$ of maximal dimension}\label{fig-max}
\end{center}
\end{figure}

\begin{prop}[\ci{Proposition 3.5}{kida-yama}]\label{prop-max}
Let $S=S_{g, p}$ be a surface with $g\geq 2$ and $p\geq 2$.
Then we have
\[\dim(\calcp(S))=\dim(\calcp_s(S))=\dim(\calcp_n(S))=\binom{p+1}{2}-1.\]
Moreover, for any simplex $\sigma$ of $\calcp(S)$ of maximal dimension, there exists a unique simplex $s=\{ \beta_0, \beta_1,\ldots, \beta_p\}$ of $\calc(S)$ such that
\begin{itemize}
\item any two curves in $s$ are HBP-equivalent; and
\item $\sigma$ consists of all HBPs of two curves in $s$.
\end{itemize}
\end{prop}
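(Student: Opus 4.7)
The plan is to analyze an arbitrary simplex $\sigma$ of $\calcp(S)$ by projecting its underlying curves to $\bar S$ via $\pi$ and exploiting a budget on the $p$ marked points of $\bar S$ coming from $\partial S$. Let $\bar{\sigma}\in \Sigma(S)$ denote the set of essential curves appearing in the vertices of $\sigma$. HBCs of $\sigma$ project to the cone point $*$, while HBP-type curves project to essential curves of $\bar S$; I partition the HBP-type curves of $\bar{\sigma}$ into HBP-equivalence classes $P_1,\dots,P_r$ of sizes $k_1,\dots,k_r\geq 2$, where each $P_j$ consists of pairwise disjoint parallel copies in $\bar S$ of a single essential curve $c_j\in V(\bar S)$. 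Since every HBP vertex of $\sigma$ is a pair of distinct curves belonging to a common $P_j$,
\[|\sigma|\leq |H|+\sum_{j=1}^r \binom{k_j}{2},\]
where $H$ is the set of HBCs of $\sigma$.

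For the lower bound, I build a simplex of cardinality $\binom{p+1}{2}$ in each of $\calcp(S)$, $\calcp_s(S)$, and $\calcp_n(S)$. Choose a simple closed curve $c$ in $\bar S$, taking $c$ separating for $\calcp_s(S)$ and non-separating for $\calcp_n(S)$; both types exist because $g\geq 2$. After arranging the $p$ marked points of $\bar S$ inside a tubular annular neighborhood of $c$, take $p+1$ pairwise disjoint parallel copies $\beta_0,\dots,\beta_p$ of $c$ inside this neighborhood so that exactly one marked point lies between each consecutive pair. For any $i\neq j$ the curves $\beta_i,\beta_j$ cobound in $S$ a genus-$0$ subsurface containing $|i-j|\geq 1$ components of $\partial S$, so $\{\beta_i,\beta_j\}$ is an HBP of the appropriate separating or non-separating type; the $\binom{p+1}{2}$ pairwise disjoint HBPs obtained form the desired simplex.

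For the upper bound, I combine two disjoint budget constraints on the $p$ marked points. Inside each class $P_j$, consecutive parallel copies of $c_j$ cobound a sub-annulus of the annular neighborhood of $c_j$ in $\bar S$ which must contain at least one marked point (otherwise the two copies would be isotopic in $V(S)$), so $P_j$ consumes at least $k_j-1$ marked points, and these consumed sets are disjoint across distinct $j$'s. Separately, each outermost HBC $\alpha_i$ bounds a disk $D_i\subset \bar S$ with $e_i\geq 2$ marked points, and these disks are disjoint from each other and from all annular neighborhoods of the $c_j$'s. A standard laminar-family count on proper non-singleton subsets of an $e_i$-element set bounds the number of HBCs of $H$ strictly nested inside $D_i$ by $e_i-2$, so $|H|\leq E-s$ where $E=\sum_i e_i$ and $s$ is the number of outermost HBCs. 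The remaining marked-point budget gives $\sum_j(k_j-1)\leq p-E$, and strict convexity of $k\mapsto \binom{k}{2}$ shows $\sum\binom{k_j}{2}$ is maximized by concentrating in a single class of size $p+1-E$, yielding
\[|\sigma|\leq (E-s)+\binom{p+1-E}{2}.\]
A short algebraic check reduces this to $\binom{p+1}{2}$, with strict inequality once $E\geq 1$.

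Consequently, $|\sigma|=\binom{p+1}{2}$ forces $E=0$ and $r=1$ with $k_1=p+1$, so $\bar{\sigma}=\{\beta_0,\dots,\beta_p\}$ is a simplex of $p+1$ pairwise HBP-equivalent curves and $\sigma$ consists of all $\binom{p+1}{2}$ HBPs formed from pairs in $\bar{\sigma}$; the uniqueness of $s=\bar{\sigma}$ is automatic since $s$ must equal the underlying curve set of $\sigma$. I expect the main technical step to be the laminar-family count bounding nested HBCs inside a disk; the remaining ingredients are standard topological observations and short combinatorial estimates.
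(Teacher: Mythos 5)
This proposition is not proved in the present paper at all: it is quoted from Proposition 3.5 of \cite{kida-yama}, so there is no internal argument to compare yours with. Evaluated on its own terms, your lower-bound construction and the reduction of the uniqueness statement to the dimension count are fine, but the upper bound has a genuine gap. The claim that the outermost HBC disks $D_i$ are ``disjoint \dots\ from all annular neighborhoods of the $c_j$'s'' is false: an HBC disk can be nested inside the gap annulus between two consecutive parallel copies of some $c_j$ (its boundary is disjoint from the two copies and inessential in $\bar S$, so nothing prevents it from sitting in the gap), and then the marked points it contains are exactly the ones that gap uses to certify its contribution of $1$ to $\sum_j(k_j-1)$. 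The two budgets therefore overlap, and the inequality $\sum_j(k_j-1)\leq p-E$ fails.

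Concretely, take $g=2$, $p=4$, let $c$ be a non-separating curve in $\bar S$, take three parallel copies $\beta_0,\beta_1,\beta_2$ of $c$ with two marked points in each of the two gaps, and let $\alpha_1,\alpha_2$ be $2$-HBCs encircling the two marked points of the respective gaps. Then $\sigma=\{\alpha_1,\alpha_2,\{\beta_0,\beta_1\},\{\beta_1,\beta_2\},\{\beta_0,\beta_2\}\}$ is a simplex of $\calcp(S)$ with $|\sigma|=5$, while your quantities are $E=4$, $s=2$, $r=1$, $k_1=3$: the inequality $\sum_j(k_j-1)\leq p-E$ reads $2\leq 0$, and your final bound $|\sigma|\leq (E-s)+\binom{p+1-E}{2}=2$ is violated. (The example does not contradict the proposition, since $5<\binom{5}{2}=10$; it only contradicts your intermediate estimate.) To repair the argument you would need a single joint count in which the gap annuli and all the HBC disks are treated as one nested family of regions carrying marked points --- for instance a laminar count over the forest formed by inclusion among gaps and disks --- rather than two independent budgets; as written, the chain of inequalities does not establish the dimension formula, and hence the characterization of top-dimensional simplices does not follow either.
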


Using this description of simplices of maximal dimension, we can also show that the maps $\phi$ and $\psi$ in Lemma \ref{lem-hbp} preserve rooted simplices defined as follows.

\begin{defn}
Let $S$ be a surface, and let $\sigma$ be a simplex of $\calcp(S)$ consisting of HBPs.
We say that $\sigma$ is {\it rooted} if there exists a curve $\alpha$ in $S$ contained in any HBP of $\sigma$.
In this case, if $|\sigma|\geq 2$, then $\alpha$ is uniquely determined and called the {\it root curve} of $\sigma$.
\end{defn}

Rooted simplices were first introduced in \cite{kida-tor} for the Torelli complex of $S$ in an analogous way. 
The proof of the following two lemmas are verbatim translations of those of the cited lemmas, where only superinjective maps from $\calcp(S)$ into itself and ones from $\calcp_s(S)$ into itself are dealt with.

\begin{lem}[\ci{Lemma 3.12}{kida-yama}]\label{lem-rooted}
Let $S=S_{g, p}$ be a surface with $g \geq 2$ and $p \geq 2$. 
Then any superinjective map from $\calcp_n(S)$ into $\calcp(S)$ preserves rooted simplices.
Moreover, the same conclusion holds for any superinjective map from $\calcp_s(S)$ into $\calcp(S)$.
\end{lem}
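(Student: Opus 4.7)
My plan is to reduce the assertion to a bijection of edge sets of complete graphs via Proposition \ref{prop-max}, and then use auxiliary HBCs in the ambient complex to detect the vertex that roots a star.

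Assume $|\sigma| \geq 2$; the case $|\sigma| = 1$ is vacuous. Let $\alpha$ be the root of $\sigma$, so $\sigma = \{\{\alpha, \beta_1\}, \ldots, \{\alpha, \beta_k\}\}$. I first enlarge $\{\alpha, \beta_1, \ldots, \beta_k\}$ to a pairwise disjoint, pairwise HBP-equivalent family of $p+1$ non-separating curves $\{\beta_0 = \alpha, \beta_1, \ldots, \beta_p\}$, so that $\tau = \{\{\beta_i, \beta_j\} : 0 \leq i < j \leq p\}$ is a simplex of $\calcp_n(S)$. Its cardinality $\binom{p+1}{2}$ is maximal by Proposition \ref{prop-max}. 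Since $\phi$ is injective on vertices, $\phi(\tau)$ is a simplex of $\calcp(S)$ of the same cardinality, hence maximal, and by Proposition \ref{prop-max} it has the form $\{\{\gamma_i, \gamma_j\} : 0 \leq i < j \leq p\}$ for some pairwise disjoint, HBP-equivalent family $\{\gamma_0, \ldots, \gamma_p\}$. It therefore suffices to show that the induced bijection $\phi|_\tau$ between edge sets of two copies of $K_{p+1}$ sends stars to stars, for then $\phi(\sigma)$, contained in the star of some $\gamma_{\pi(0)}$, is rooted.

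For $p = 2$ the claim is immediate, since every pair of edges in $K_3$ shares a vertex. For $p \geq 3$, I produce, for each $i \in \{0, \ldots, p\}$, an HBC $\delta_i$ in $S$ that is disjoint from every $\beta_j$ with $j \neq i$ and intersects $\beta_i$, so that
\begin{equation*}
  i(\delta_i, \{\beta_j, \beta_k\}) = 0 \iff i \notin \{j, k\}.
\end{equation*}
Such $\delta_i$ are built inside the complementary components of $\bigcup_{j \neq i} \beta_j$ in $S$, using $g \geq 2$ and $p \geq 2$ to ensure enough topology. By superinjectivity of $\phi$, the subset $T_i \subseteq \phi(\tau)$ of vertices disjoint from $\phi(\delta_i)$ has cardinality $\binom{p}{2}$. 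Since $T_i \cup \{\phi(\delta_i)\}$ is a simplex of $\calcp(S)$ properly larger than $T_i$, extending it to a maximal simplex yields another $K_{p+1}$-configuration $\{\eta_0, \ldots, \eta_p\}$ by Proposition \ref{prop-max}; because two distinct maximal $K_{p+1}$-configurations of $\calcp(S)$ share at most $\binom{p}{2}$ edges (corresponding to sharing at most $p$ of the underlying curves), the set $T_i$ is forced to be the intersection, that is, the sub-$K_p$ on $\{\gamma_a : a \neq \pi(i)\}$ for a uniquely determined index $\pi(i)$. This produces a permutation $\pi$ of $\{0, \ldots, p\}$ with $\phi(\{\beta_i, \beta_j\}) = \{\gamma_{\pi(i)}, \gamma_{\pi(j)}\}$, whence stars go to stars. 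The argument for $\psi \colon \calcp_s(S) \to \calcp(S)$ is identical after replacing ``non-separating'' with ``separating'' and adapting the auxiliary curves accordingly.

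The main obstacle is the construction of the auxiliary HBCs $\delta_i$ and the verification that $T_i$ singles out a single curve $\gamma_{\pi(i)}$ on the target side. Both steps require a case-by-case analysis of the complementary subsurfaces of the family $\{\beta_j\}$ in $S$, tracking the distribution of boundary components among the resulting regions and handling separately the non-separating and separating cases. This is exactly the content of the proof of Lemma 3.12 in \cite{kida-yama}, where only endomorphisms of $\calcp(S)$ and of $\calcp_s(S)$ are considered; because every step of that argument involves only the ambient complex $\calcp(S)$ on the target side, the very same proof applies without change to the present superinjective maps from $\calcp_n(S)$ or $\calcp_s(S)$ into $\calcp(S)$.
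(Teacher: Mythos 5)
Your proposal is correct and takes the same route as the paper: the paper offers no independent argument for this lemma, stating only that the proof is a verbatim translation of that of Lemma 3.12 in \cite{kida-yama} (which treats maps into $\calcp(S)$ on the target side and hence applies unchanged), and your closing paragraph makes exactly this observation. Your combinatorial sketch of that argument is a faithful reconstruction --- the only small imprecision is that the auxiliary vertices $\delta_i$ cannot always be chosen to be HBCs (for the two ``extreme'' curves of the family $\{\beta_0,\ldots,\beta_p\}$ one must use $1$-HBPs instead, since the relevant complementary piece contains only one component of $\partial S$), and the identification of $T_i$ as a co-star is most cleanly seen by noting that $T_i$ consists of all pairs from the set $W$ of curves $\gamma_a$ disjoint from $\phi(\delta_i)$, so $\binom{|W|}{2}=\binom{p}{2}$ forces $|W|=p$; both points are covered by the case analysis you defer to the cited lemma.
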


\begin{lem}[\ci{Lemma 3.13}{kida-yama}]\label{lem-root-curve}
Let $S=S_{g, p}$ be a surface with $g \geq 2$ and $p \geq 2$.
Let $\phi \colon \calcp_n(S) \to \calcp(S)$ be a superinjective map.
Pick a simplex $\sigma$ of $\calcp_n(S)$ of maximal dimension.
We denote by $\{ \alpha_0,\alpha_1,\ldots, \alpha_p\}$ the collection of curves in HBPs of $\sigma$. 
Then there exists a collection of curves in $S$, $\{ \beta_0,\beta_1,\ldots, \beta_p\}$, satisfying the equality
\[\phi(\{ \alpha_j, \alpha_k\})=\{ \beta_j, \beta_k\}\]
for any distinct $j, k=0,1,\ldots, p$.
Moreover, the same conclusion holds for any superinjective map from $\calcp_s(S)$ into $\calcp(S)$ and any simplex of $\calcp_s(S)$ of maximal dimension.
\end{lem}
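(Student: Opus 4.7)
The plan is to combine Proposition \ref{prop-max} (which describes maximal simplices of $\calcp(S)$) with Lemma \ref{lem-rooted} (preservation of rooted simplices) to read off the labeling of the image.

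First I will observe that $\sigma$ has maximal dimension in $\calcp(S)$: by Proposition \ref{prop-max}, $\sigma$ is the set of all $\binom{p+1}{2}$ HBPs formed from the $p+1$ pairwise HBP-equivalent curves $\alpha_0,\ldots,\alpha_p$. Since a superinjective map is injective, $\phi(\sigma)$ has the same cardinality and is again a maximal simplex of $\calcp(S)$. By Lemma \ref{lem-hbp} every element of $\phi(\sigma)$ is an HBP, so Proposition \ref{prop-max} applies once more and yields a collection $\{\beta'_0,\ldots,\beta'_p\}$ of pairwise HBP-equivalent curves in $S$ whose $\binom{p+1}{2}$ HBPs are precisely the elements of $\phi(\sigma)$. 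The task is then to match the indices on the domain and target sides.

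To do this, for each $j\in\{0,\ldots,p\}$ I will consider the sub-simplex $\sigma_j=\{\,\{\alpha_j,\alpha_k\}:k\neq j\,\}$ of $\sigma$, which is rooted with root curve $\alpha_j$ and has cardinality $p\geq 2$, so its root curve is uniquely determined. Lemma \ref{lem-rooted} then tells me that $\phi(\sigma_j)$ is rooted; denote its unique root curve by $\beta_j$. Since $|\phi(\sigma_j)|=p\geq 2$, the curve $\beta_j$ appears in at least two distinct HBPs drawn from pairs of $\{\beta'_0,\ldots,\beta'_p\}$, and this forces $\beta_j\in\{\beta'_0,\ldots,\beta'_p\}$.

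It then remains to check that $\phi(\{\alpha_j,\alpha_k\})=\{\beta_j,\beta_k\}$ for distinct $j,k$. The HBP $\phi(\{\alpha_j,\alpha_k\})$ lies in both $\phi(\sigma_j)$ and $\phi(\sigma_k)$ and therefore contains both $\beta_j$ and $\beta_k$; once I verify that $\beta_j\neq\beta_k$ for $j\neq k$, the desired equality follows because an HBP has exactly two curves. For distinctness, if $\beta_j=\beta_k$ then $\phi(\sigma_j)$ and $\phi(\sigma_k)$ would both have to coincide with the set of HBPs in $\phi(\sigma)$ containing this common root curve, a set of cardinality only $p$, whereas $|\sigma_j\cup\sigma_k|=2p-1>p$ (using $p\geq 2$), contradicting injectivity of $\phi$. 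The same argument, word for word, handles the superinjective map $\psi\colon\calcp_s(S)\to\calcp(S)$, invoking the cases of Proposition \ref{prop-max} and Lemma \ref{lem-rooted} that apply to separating HBPs. There is no genuine obstacle here beyond organizing the tools in the right order; the one point that deserves care is ensuring that each $\sigma_j$ has at least two HBPs so that the root curve of its image is well defined, which is precisely where the assumption $p\geq 2$ is used.
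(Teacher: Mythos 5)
Your argument is correct, and it is essentially the intended one: the paper does not reprove this lemma but defers to Lemma 3.13 of \cite{kida-yama}, whose proof is exactly this combination of Proposition \ref{prop-max} (to identify $\phi(\sigma)$ as the set of all HBPs from a collection of $p+1$ mutually HBP-equivalent curves) and Lemma \ref{lem-rooted} (to attach a well-defined root curve $\beta_j$ to each $\phi(\sigma_j)$), followed by the counting argument showing the $\beta_j$ are distinct. No gaps.
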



\subsection{Outline}

Let $S=S_{g, p}$ be a surface with $g\geq 2$ and $p\geq 2$.
Our aim is to show surjectivity of any superinjective map from $\calcp(S)$ into itself and any superinjective map from $\calcp_s(S)$ into itself.
In Section \ref{sec-bn-b}, we discuss topological types of vertices preserved under a superinjective map $\phi \colon \calcp_n(S)\to \calcp(S)$.
More precisely, we show that for any integers $j$ and $k$ with $2\leq j\leq p$ and $1\leq k\leq p$, the map $\phi$ preserves $j$-HBCs and non-separating $k$-HBPs, respectively.
The inclusion $\phi(\calcp_n(S))\subset \calcp_n(S)$ therefore holds.
In Section \ref{sec-bs-bs}, we deal with a superinjective map $\psi \colon \calcp_s(S)\to \calcp_s(S)$, and similarly show that for any $j$ and $k$, the map $\psi$ preserves $j$-HBCs and separating $k$-HBPs, respectively.

In Section \ref{sec-bn}, we focus on the case of $p=2$, and discuss topological types of hexagons in $\calcp_n(S)$, provided that topological types of their vertices are given.
As a result, for any superinjective map $\phi \colon \calcp_n(S)\to \calcp_n(S)$ and any non-separating 2-HBP $a$ in $S$, the map $\phi$ sends the link of $a$ onto the link of $\phi(a)$.
In Section \ref{sec-bs}, we still focus on the case of $p=2$, and obtain a similar result for any superinjective map $\psi \colon \calcp_s(S)\to \calcp_s(S)$ and any separating 2-HBP $b$ in $S$.
Namely, we show that $\psi$ sends the link of $b$ onto the link of $\psi(b)$.

In Section \ref{sec-d}, to obtain a similar surjectivity on the link of any 1-HBP, we introduce two natural subcomplexes of the complex of arcs, denoted by $\cald(X, \partial)$ and $\cald(Y)$.
We show that any injective simplicial maps from those complexes into themselves are surjective.

In Section \ref{sec-surj}, we prove that any superinjective maps $\phi \colon \calcp_n(S)\to \calcp_n(S)$ and $\psi \colon \calcp_s(S)\to \calcp_s(S)$ are surjective.
This is proved by induction on $p$.
In the case of $p=2$, we show that for any non-separating 1-HBP $a$, the restriction of $\phi$ to the link of $a$ induces an injective simplicial map from $\cald(X, \partial)$ into itself.
It follows from the result in Section \ref{sec-d} that $\phi$ sends the link of $a$ onto the link of $\phi(a)$.
Similarly, using $\cald(Y)$ in place of $\cald(X, \partial)$, we show that for any separating 1-HBP $b$, the map $\psi$ sends the link of $b$ onto the link of $\psi(b)$.
Combining surjectivity on the link of a 2-HBP proved in Sections \ref{sec-bn} and \ref{sec-bs}, we obtain surjectivity of $\phi$ and $\psi$.

Finally, surjectivity of any $\phi$ and $\psi$ leads to surjectivity of any superinjective map from $\calcp(S)$ into itself.


\subsection{Proof of Corollary \ref{cor-braid}}\label{sec-cor}

Assuming Corollary \ref{cor-cohop}, we prove Corollary \ref{cor-braid}.
Let $S=S_{g, p}$ be a surface with $g\geq 2$ and $p\geq 2$.
We denote by $\jmath \colon \mod(S)\to \mod(\bar{S})$ the homomorphism associated with the inclusion of $S$ into $\bar{S}$, which is an extension of $\iota$.
We define $B(S)$ as the kernel of $\jmath$.
Our aim is to prove that any finite index subgroup of $B(S)$ is co-Hopfian.

Let $\Gamma$ be a finite index subgroup of $B(S)$.
Let $f\colon \Gamma \to \Gamma$ be an injective homomorphism.
We put $\Gamma_0=P(S)\cap f^{-1}(P(S)\cap \Gamma)$, which is a finite index subgroup of $P(S)$.
By Corollary \ref{cor-cohop}, there exists $\gamma \in \mod^*(S)$ with $f(x)=\gamma x\gamma^{-1}$ for any $x\in \Gamma_0$.
For any $y\in \Gamma$ and any HBC $\alpha$ in $S$, there exists a non-zero integer $N$ such that $t_{\alpha}^N$ and $t_{y\alpha}^N$ belong to $\Gamma_0$.
We then have
\[f(yt_{\alpha}^Ny^{-1})=f(y)f(t_{\alpha}^N)f(y)^{-1}=f(y)\gamma t_{\alpha}^N\gamma^{-1}f(y)^{-1}=t_{f(y)\gamma \alpha}^N.\]
On the other hand, we have
\[f(yt_{\alpha}^Ny^{-1})=f(t_{y\alpha}^N)=\gamma t_{y\alpha}^N\gamma^{-1}=t_{\gamma y\alpha}^N.\]
We thus have $f(y)\gamma \alpha =\gamma y\alpha$.
Replacing $\alpha$ with any HBP $b=\{ b_1, b_2\}$ in $S$ and replacing $t_{\alpha}$ with $t_{b_1}t_{b_2}^{-1}$, we can show the equality $f(y)\gamma b=\gamma yb$ along a verbatim argument.
Since the action of $\mod^*(S)$ on $\calcp(S)$ is faithful by \cite[Lemma 2.2]{kida-yama}, the equality $f(y)=\gamma y\gamma^{-1}$ holds for any $y\in \Gamma$.

We have $\gamma^{-1}B(S)\gamma =B(S)$ because $B(S)$ is a normal subgroup of $\mod^*(S)$.
It follows that
\[[B(S):\Gamma]\leq [B(S):f(\Gamma)]=[B(S):\gamma \Gamma \gamma^{-1}]=[B(S): \Gamma].\]
The equality $[B(S):\Gamma]=[B(S):f(\Gamma)]$ holds.
We thus have $f(\Gamma)=\Gamma$.
Corollary \ref{cor-braid} is proved.


\section{Superinjective maps from $\calcp_n(S)$ into $\calcp(S)$}\label{sec-bn-b}

We mean by a {\it pentagon} in $\calcp(S)$ a subgraph of $\calcp(S)$ consisting of five vertices $v_1,\ldots, v_5$ with $i(v_k, v_{k+1})=0$ and $i(v_k, v_{k+2})\neq 0$ for any $k$ mod $5$. 
In this case, let us say that the pentagon is defined by the 5-tuple $(v_1,\ldots, v_{5})$.
A pentagon in $\calcp_n(S)$ is defined as a pentagon in $\calcp(S)$ any of whose vertices belongs to $\calcp_n(S)$.
Examples of pentagons in $\calcp_n(S_{g, 2})$ are drawn in Figures \ref{fig-nhbp-pen} and \ref{fig-nhbp-c-pen}.
\begin{figure}
\begin{center}
\includegraphics[width=10cm]{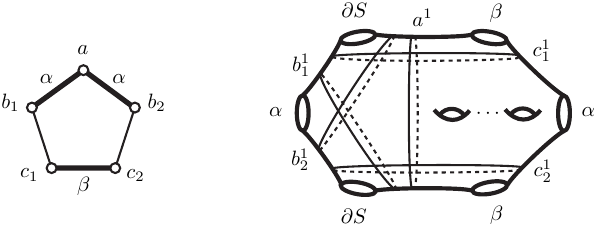}
\caption{A pentagon in $\calcp_n(S_{g, 2})$ with $a=\{ \alpha, a^1\}$, $b_j=\{ \alpha, b_j^1\}$ and $c_j=\{ \beta, c_j^1\}$ for $j=1, 2$. Thick edges of the pentagon in the left hand side and nearby symbols signify rooted edges and their root curves, respectively. Rooted edges and their root curves are similarly indicated in other Figures. The surface obtained by cutting $S_{g, 2}$ along $\alpha$ and $\beta$ is drawn in the right hand side.}\label{fig-nhbp-pen}
\end{center}
\end{figure}
The following two lemmas are observations on pentagons in $\calcp(S_{g, 2})$ consisting of HBPs.

\begin{lem}\label{lem-no-pent}
Let $S=S_{g, 2}$ be a surface with $g\geq 2$. 
Then there exists no pentagon $\Pi$ in $\calcp(S)$ such that
\begin{itemize}
\item any vertex of $\Pi$ corresponds to an HBP; and
\item there exists a curve in $S$ contained in any HBP of $\Pi$.
\end{itemize}
\end{lem}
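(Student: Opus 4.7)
The plan is to assume, for contradiction, that such a pentagon $\Pi$ exists and attach to each vertex a sign whose parity gives an obstruction. Write the vertices as $v_i=\{\alpha,\beta_i\}$ for $i=1,\dots,5$ (indices modulo $5$), with the $\beta_i$ pairwise distinct isotopy classes in $S$ that are disjoint from $\alpha$ and isotopic to $\alpha$ in $\bar{S}$ (since each $\{\alpha,\beta_i\}$ is an HBP). For each $i$, the unique genus-$0$ component $R_i$ of $S_{\{\alpha,\beta_i\}}$ has boundary $\alpha\cup\beta_i\cup T_i$ for some non-empty subset $T_i\subseteq\{\partial_1,\partial_2\}$, and $R_i$ lies on a well-defined side of $\alpha$ in $\bar{S}$ which I record as $s_i\in\{+1,-1\}$ (when $\alpha$ is separating only one side supports HBPs and I fix $s_i=+1$ throughout). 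Define
\[
\epsilon_i \;:=\; s_i\cdot(-1)^{|T_i|}\in\{+1,-1\}.
\]
I will show that $\epsilon_i=-\epsilon_{i+1}$ across every pentagon edge; running around the $5$-cycle then yields $\epsilon_1=(-1)^5\epsilon_1=-\epsilon_1$, the desired contradiction.

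To verify the sign-flip on a fixed edge $v_iv_{i+1}$, I analyze the three pairwise disjoint curves $\alpha,\beta_i,\beta_{i+1}$, all pairwise isotopic in $\bar{S}$. They admit a nesting trichotomy—exactly one of the three is characterized as lying between the other two, because the hypothesis $g\geq 2$ makes the genus-$0$ complementary region of any two disjoint curves representing $[\alpha]$ in $\bar{S}$ uniquely the annulus side. If $\alpha$ is the middle curve, then $\beta_i,\beta_{i+1}$ sit on opposite sides of $\alpha$, so $s_i\neq s_{i+1}$, and $R_i, R_{i+1}$ are contained in disjoint subsurfaces of $\bar{S}_\alpha$, forcing $T_i\cap T_{i+1}=\emptyset$. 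Two non-empty disjoint subsets of the two-element set $\{\partial_1,\partial_2\}$ must both be singletons, so $|T_i|=|T_{i+1}|=1$; combined with $s_i\neq s_{i+1}$ this gives $\epsilon_i=-\epsilon_{i+1}$.

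In the remaining case, one of $\beta_i,\beta_{i+1}$ is the middle curve, so $s_i=s_{i+1}$. Supposing $\beta_i$ is in the middle, $R_i$ is properly contained in $R_{i+1}$, hence $T_i\subseteq T_{i+1}$; the inclusion is strict, since otherwise the annulus in $\bar{S}$ between $\beta_i$ and $\beta_{i+1}$ would contain no boundary component of $S$, so $\beta_i$ and $\beta_{i+1}$ would be isotopic in $S$, contradicting distinctness of the pentagon vertices. With $\emptyset\neq T_i\subsetneq T_{i+1}\subseteq\{\partial_1,\partial_2\}$, necessarily $|T_i|=1$ and $|T_{i+1}|=2$, so $(-1)^{|T_i|}\neq(-1)^{|T_{i+1}|}$ while $s_i=s_{i+1}$, again producing $\epsilon_i=-\epsilon_{i+1}$; the subcase with $\beta_{i+1}$ in the middle is identical by symmetry. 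The main obstacle is justifying the nesting trichotomy and the strict inclusion of the $T_i$'s; both are routine once one notes that for $g\geq 2$ the pair $(T_i,s_i)$ is intrinsically attached to each HBP containing $\alpha$.
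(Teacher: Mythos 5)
Your parity argument is a genuinely different (and cleaner) strategy than the paper's, which proceeds by a case analysis on how many $2$-HBPs the pentagon contains and on which boundary components of $S_{\alpha}$ each curve encircles. In fact your argument, if correct, proves the stronger statement that the ``disjointness graph'' on the set of HBPs containing $\alpha$ is bipartite, so it never uses the hypothesis $i(v_k,v_{k+2})\neq 0$. The nesting trichotomy and the strictness of the inclusion $T_i\subsetneq T_{i+1}$ are fine for $g\geq 2$, as you say.

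However, there is a genuine error in the separating case. Your convention ``when $\alpha$ is separating only one side supports HBPs and I fix $s_i=+1$ throughout'' rests on a false claim. If $\alpha$ is separating in $S$ and the two components $\partial_1,\partial_2$ of $\partial S$ lie in \emph{different} components $P, P'$ of $S_{\alpha}$ (both of which have positive genus, since $\alpha$ is not an HBC), then \emph{both} sides support HBP partners of $\alpha$: a curve in $P$ cutting off a pair of pants containing $\partial_1$ and the copy of $\alpha$ forms a separating $1$-HBP with $\alpha$, and likewise on the $P'$ side. In this sub-case every HBP containing $\alpha$ has $|T_i|=1$, same-side pairs are never disjoint, and every disjoint pair is an opposite-side pair --- i.e.\ exactly your ``$\alpha$ is the middle curve'' case, whose conclusion $\epsilon_i=-\epsilon_{i+1}$ requires $s_i\neq s_{i+1}$. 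With your convention $s_i\equiv +1$ you instead get $\epsilon_i=\epsilon_{i+1}$ on every edge, and no contradiction is derived, so this sub-case is simply not covered (the lemma is still true there, since the disjointness graph is complete bipartite between the two sides, but your proof does not show it). The fix is easy and makes the special convention unnecessary: in all cases define $s_i$ as the actual local side of $\alpha$ (equivalently, which of the two boundary components of $S_{\alpha}$ coming from $\alpha$ lies in $R_i$); with that definition your edge analysis goes through verbatim and the proof is complete.
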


\begin{proof}
Assuming that there exists such a $\Pi$ defined by a 5-tuple $(a, b, c, d, e)$, we deduce a contradiction. 
Let $\alpha$ denote the curve shared by the five HBPs of $\Pi$. 
Label all boundary components of $S_{\alpha}$ by $\alpha_1$, $\alpha_2$, $\partial_1$ and $\partial_2$ so that $\alpha_1$ and $\alpha_2$ correspond to $\alpha$, and $\partial_1$ and $\partial_2$ are components of $\partial S$. 
We define $a^1$ as the curve in $a$ distinct from $\alpha$, and define $b^1$, $c^1$, $d^1$ and $e^1$ similarly.

We claim that at least one of vertices of $\Pi$ is a 2-HBP.
Suppose that this is not true.
If $x$ and $y$ are distinct and disjoint 1-HBPs in $S$, then the pair of pants cut off by $x$ from $S$ and that cut off by $y$ from $S$ contain distinct components of $\partial S$.
Apply this to each edge of $\Pi$.
We then obtain a contradiction because the number of edges of $\Pi$ is odd.
The claim follows.

We may therefore assume that $a$ is a 2-HBP and that $a^1$ encircles $\alpha_1$, $\partial_1$ and $\partial_2$ as a curve in $S_{\alpha}$ if $\alpha$ is non-separating, or as a curve in a component of $S_{\alpha}$ if $\alpha$ is separating.
It follows that $b$ and $e$ are 1-HBPs. 
Without loss of generality, we may assume that $b^1$ encircles $\alpha_1$ and $\partial_1$ (see Figure \ref{fig-hbp-pf}).
\begin{figure}
\begin{center}
\includegraphics[width=11cm]{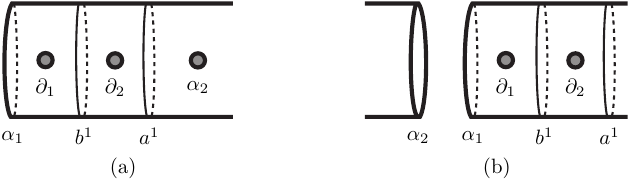}
\caption{The surface $S_{\alpha}$ is drawn.
When $\alpha$ is non-separating in $S$, we have (a).
When $\alpha$ is separating in $S$, we have (b).}\label{fig-hbp-pf}
\end{center}
\end{figure}
If both $c$ and $d$ were 1-HBPs, then since $c$ and $d$ contain $\alpha$, the curve $\alpha$ would be non-separating, $c^1$ would encircle $\alpha_2$ and $\partial_2$, and $d^1$ would encircle $\alpha_1$ and $\partial_1$. 
It turns out that $e^1$ encircles $\alpha_2$ and $\partial_2$ and cannot be disjoint from $a^1$. 
This is a contradiction. 
If $c$ were a 2-HBP, then $c^1$ would have to encircle $\alpha_1$, $\partial_1$ and $\partial_2$. 
It follows that each of $d^1$ and $e^1$ encircles either $\alpha_1$ and $\partial_1$ or $\alpha_1$ and $\partial_2$, and that $d^1$ and $e^1$ cannot be disjoint and distinct. 
This is also a contradiction. 
By symmetry, one can deduce a contradiction if $d$ is assumed to be a 2-HBP.
\end{proof}

\begin{lem}\label{lem-nhbp-pen-neq}
Let $S=S_{g, 2}$ be a surface with $g\geq 2$. 
Let $(a, b_1, c_1, c_2, b_2)$ be a 5-tuple defining a pentagon $\Pi$ in $\calcp(S)$ such that
\begin{itemize}
\item any vertex of $\Pi$ corresponds to an HBP; and
\item any of the edges $\{ a, b_1\}$, $\{ a, b_2\}$ and $\{ c_1, c_2\}$ is rooted.
\end{itemize}
Let $\alpha_1$, $\alpha_2$ and $\beta$ denote the root curves of the three edges in the second condition, respectively (see Figure \ref{fig-pen-pf} (a)). 
Then the equality $\alpha_1=\alpha_2$ holds, and $\alpha_1$ and $\beta$ are disjoint, but not HBP-equivalent.
\end{lem}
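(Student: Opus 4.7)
The plan is to establish in order the three conclusions: $\alpha_1 = \alpha_2$, $i(\alpha_1, \beta) = 0$, and $\pi(\alpha_1) \neq \pi(\beta)$.

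For $\alpha_1 = \alpha_2$, argue by contradiction. Suppose $\alpha_1 \neq \alpha_2$, so that $a = \{\alpha_1, \alpha_2\}$ is determined, and write $b_j = \{\alpha_j, \beta_j\}$ for $j=1,2$. The pentagon edges $\{a, b_j\}$ together with the non-edge $\{b_1, b_2\}$ force $\alpha_1, \alpha_2, \beta_1, \beta_2$ to be pairwise disjoint in $S$ except for the pair $(\beta_1, \beta_2)$, and all four project to the single curve $\bar c := \pi(\alpha_1)$ in $\bar S$. If $a$ is a $1$-HBP, then the triple $\{\alpha_1, \alpha_2, \beta_1\}$ realizes three parallel copies of $\bar c$ in $\bar S$ with the two boundary components of $S$ distributed one to each of the two intermediate annular regions (to prevent pairs from becoming isotopic in $S$); checking every possible position of the fourth parallel copy $\beta_2$ (between $\alpha_1$ and $\alpha_2$, or beyond either end) shows that each position either forces $\beta_2$ to equal some $\alpha_i$ (whence $b_2 = a$ or $b_2$ is invalid) or forces $\beta_2$ isotopic to $\beta_1$ (contradicting $i(\beta_1, \beta_2) \neq 0$). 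If $a$ is a $2$-HBP, then $\alpha_1, \alpha_2$ bound a $4$-holed sphere $Q$ in $S$, each $\beta_j$ must lie in $Q$ (otherwise $\beta_j$ is isotopic to some $\alpha_i$ in $S$), and the only two essential isotopy classes in $Q \cong S_{0,4}$ whose image under $\pi$ is $\bar c$ intersect each other, so $\{\beta_1, \beta_2\}$ are these two classes. The vertices $c_1, c_2$ then enter: the pentagon edges $\{b_j, c_j\}$ and $\{c_1, c_2\}$ force $\beta$ disjoint from $\alpha_1, \alpha_2, \beta_1, \beta_2$, and hence $\beta$ cannot lie in $Q$ (every essential curve of $Q$ intersects $\beta_1$ or $\beta_2$), so $\beta \subset S \setminus Q$; a topological analysis of how $\gamma_1, \gamma_2$ can cross $\alpha_2, \alpha_1$ respectively while remaining disjoint from the rest of the pentagon data and HBP-equivalent to $\beta$ then produces the required contradiction.

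For $i(\alpha_1, \beta) = 0$, set $\alpha := \alpha_1 = \alpha_2$. Since $\alpha$ is a curve of $b_1$ and the pentagon edge $\{b_1, c_1\}$ gives $i(b_1, c_1) = 0$, the curve $\alpha$ is disjoint from both curves of $c_1 = \{\beta, \gamma_1\}$, so $i(\alpha, \beta) = 0$.

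For the non-HBP-equivalence, argue by contradiction that $\pi(\alpha) = \pi(\beta)$. If $\alpha = \beta$, then $\alpha$ is contained in every vertex of $\Pi$, contradicting Lemma \ref{lem-no-pent} directly. Otherwise $\alpha \neq \beta$; the four curves $\alpha, \beta, \gamma_1, \gamma_2$ are pairwise disjoint (from pentagon edges $\{b_j, c_j\}$ and $\{c_1, c_2\}$) and all project to $\bar c := \pi(\alpha)$. A parallel-copies counting argument shows that in $S_{g, 2}$ at most three pairwise disjoint curves can be pairwise non-isotopic in $S$ while all projecting to a single curve in $\bar S$, because each intermediate annular region between consecutive parallel copies must contain one of the two boundary components of $S$ to avoid an isotopy coincidence. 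Combined with $\alpha \neq \beta$, $\beta \neq \gamma_j$, and $\gamma_1 \neq \gamma_2$ (else $c_1 = c_2$), this forces $\alpha = \gamma_1$ or $\alpha = \gamma_2$; after relabeling take $\alpha = \gamma_1$, so that $c_1 = \{\alpha, \beta\}$ and $\alpha$ lies in four of the five vertices of $\Pi$. Writing $a = \{\alpha, \alpha'\}$, a second application of the counting argument to the disjoint tuple $\{\alpha, \alpha', \beta_2, \gamma_2\}$ forces $i(\alpha', \gamma_2) \neq 0$. The HBP $e := \{\alpha, \gamma_2\}$ is then distinct from each of $a, b_1, c_1, b_2$, and a direct check of the pertinent disjointness and intersection numbers shows that the $5$-tuple $(a, b_1, c_1, e, b_2)$ defines a pentagon in $\calcp(S)$ every vertex of which is an HBP containing $\alpha$, contradicting Lemma \ref{lem-no-pent}.

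The main obstacle is the $2$-HBP subcase of the first step, where the interplay between the curves inside $Q$ and the curves $\beta, \gamma_1, \gamma_2$ living partly in $S \setminus Q$ demands careful topological bookkeeping; once the parallel-copies counting is in hand, the remaining pieces fall into place.
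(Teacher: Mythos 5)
Your overall skeleton matches the paper's proof (contradiction for $\alpha_1\neq\alpha_2$ split by the type of $a$, the easy disjointness observation, and a contradiction with Lemma \ref{lem-no-pent} for HBP-equivalence), but the first step has a genuine gap in the 1-HBP subcase. There you derive the contradiction from the configuration of $\alpha_1,\alpha_2,\beta_1,\beta_2$ alone, treating $\beta_2$ as a ``fourth parallel copy'' of the linearly ordered triple $\{\alpha_1,\alpha_2,\beta_1\}$. But $\beta_2$ intersects $\beta_1$, so it is not a parallel copy of that triple, and the possible positions of $\beta_2$ relative to $\{\alpha_1,\alpha_2\}$ do not determine its isotopy class: the region of $S$ on the far side of the pair of pants bounded by $\alpha_1\cup\alpha_2\cup\partial_1$ contains infinitely many non-isotopic curves in the fiber $\pi^{-1}(\bar{c})$. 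In fact the sub-configuration $(a,b_1,b_2)$ with $\alpha_1\neq\alpha_2$ is realizable --- take $\beta_j$ to be the boundary of a regular neighborhood of $\alpha_j\cup\partial_2\cup(\text{an arc joining them})$; then $\beta_1$ and $\beta_2$ automatically intersect and all the required disjointness holds --- so no contradiction can come from $a$, $b_1$, $b_2$ alone. The paper's argument in this subcase is forced to use $c_1$ and $c_2$: the components of $c_1^1\cap Q$ and $c_2^1\cap Q$ are essential arcs in the pair of pants $Q$ based on $\alpha_2$ and $\alpha_1$ respectively, and two such arcs must cross, contradicting $i(c_1,c_2)=0$. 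Your proof never brings $c_1,c_2$ into this subcase, so it cannot close.

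Two further points. In the 2-HBP subcase your claim that $Q\cong S_{0,4}$ contains only two essential isotopy classes mapping to $\bar{c}$ is false (there is a Farey graph's worth of curves separating $\{\alpha_1,\partial_i\}$ from $\{\alpha_2,\partial_{3-i}\}$), and the decisive final step is left as ``a topological analysis \dots produces the required contradiction,'' which is exactly the part the paper actually carries out (showing $b_1^1$ and $b_2^1$ are each determined as a boundary of a regular neighborhood of $(c_j^1\cap Q)\cup\alpha_{3-j}$, forcing $b_1^1=b_2^1$). In the last step, once your counting argument yields $\alpha\in\{\gamma_1,\gamma_2\}$, say $c_1=\{\alpha,\beta\}$, you are already done: $\alpha\in b_2$ and $\beta\in c_2$ give $i(c_1,b_2)=0$, contradicting the pentagon. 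The detour through the auxiliary $5$-tuple $(a,b_1,c_1,e,b_2)$ is not only unnecessary but cannot work, since that tuple fails to be a pentagon for precisely this reason ($c_1$ and $b_2$ are disjoint); moreover the intermediate claim that the counting argument forces $i(\alpha',\gamma_2)\neq 0$ overlooks the coincidence $\gamma_2=\beta_2$, which is not excluded by your list.
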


\begin{proof}
We define curves $b_1^1$, $b_2^1$, $c_1^1$ and $c_2^1$ so that we have $b_1=\{ \alpha_1, b_1^1\}$, $b_2=\{ \alpha_2, b_2^1\}$, $c_1=\{ \beta, c_1^1\}$ and $c_2=\{ \beta, c_2^1\}$.
It follows from $i(b_1, c_1)=i(b_2, c_2)=0$ that $i(\alpha_1, \beta)=i(\alpha_2, \beta)=0$.
\begin{figure}
\begin{center}
\includegraphics[width=12cm]{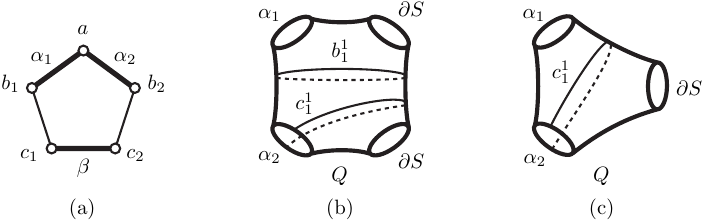}
\caption{}\label{fig-pen-pf}
\end{center}
\end{figure}
Let $Q$ denote the holed sphere cut off by $a$ from $S$.

If $\alpha_1$ and $\alpha_2$ were distinct, then we would have the equality $a=\{ \alpha_1, \alpha_2\}$.
Since $i(c_1, a)\neq 0$ and $i(c_2, a)\neq 0$, we have $i(c_1^1, \alpha_2)\neq 0$ and $i(c_2^1, \alpha_1)\neq 0$.
Suppose that $a$ is a 2-HBP. 
It then follows that $b_1^1$ and $b_2^1$ are curves in $Q$ separating the two boundary components of $Q$ that correspond to $\alpha_1$ and $\alpha_2$. 
Since $i(c_1^1, \alpha_2)\neq 0$ and $i(c_1^1, b_1^1)=i(c_1^1, \alpha_1)=0$, the intersection $c_1^1\cap Q$ consists of mutually isotopic, essential simple arcs in $Q$ (see Figure \ref{fig-pen-pf} (b)). 
Since $i(c_2^1, \alpha_1)\neq 0$ and $i(c_2^1, b_2^1)=i(c_2^1, \alpha_2)=0$, the intersection $c_2^1\cap Q$ also consists of mutually isotopic, essential simple arcs in $Q$. 
It however follows from $i(c_1, c_2)=0$ that the equality $b_1^1=b_2^1$ has to hold because $b_1^1$ is a boundary component of a regular neighborhood of the union $(c_1^1\cap Q)\cup \alpha_2$ and a similar property holds for $b_2^1$. 
This contradicts $i(b_1, b_2)\neq 0$. 

We next suppose that $a$ is a 1-HBP. 
The component $Q$ is then a pair of pants containing $\alpha_1$ and $\alpha_2$ as boundary components.
Since we have $i(c_1^1, \alpha_2)\neq 0$, $i(c_2^1, \alpha_1)\neq 0$ and $i(c_1^1, \alpha_1)=i(c_2^1, \alpha_2)=0$, any component of $c_1^1\cap Q$ and any component of $c_2^1\cap Q$ have to intersect (see Figure \ref{fig-pen-pf} (c)). 
This contradicts $i(c_1, c_2)=0$.

We have shown the equality $\alpha_1=\alpha_2$, and denote the curve by $\alpha$. 
We prove the latter assertion of the lemma.
By Lemma \ref{lem-no-pent}, we have $\alpha \neq \beta$. 
Assuming that $\alpha$ and $\beta$ are HBP-equivalent, we deduce a contradiction. 
It then follows that $b_1$ and $c_1$ are equivalent and have a common curve $\gamma$.
If $\gamma =\alpha$, then we have $c_1=\{ \alpha, \beta \}$, and this contradicts $i(c_1, b_2)\neq 0$. 
Similarly, if $\gamma =\beta$, then we have $b_1=\{ \alpha, \beta \}$, and this contradicts $i(b_1, c_2)\neq 0$.
We thus have $\gamma \neq \alpha$ and $\gamma \neq \beta$.
This implies the equalities $b_1=\{ \alpha, \gamma \}$ and $c_1=\{ \beta, \gamma \}$.
It follows from $i(c_2, \alpha)=i(c_2, \gamma)=0$ that $i(c_2, b_1)=0$.
This is a contradiction.
\end{proof}

As an application of the last two lemmas, we prove the following property on a superinjective map from $\calcp_n(S)$ into $\calcp(S)$ in the case of $p=2$.
Subsequently, we prove an analogous property in the case of $p\geq 3$.

\begin{lem}\label{lem-hbp-neq-p2}
Let $S=S_{g, 2}$ be a surface with $g\geq 2$, and let $\phi \colon \calcp_n(S)\to \calcp(S)$ be a superinjective map.
Let $a$ and $b$ be non-separating 1-HBPs in $S$ such that
\begin{itemize}
\item $a$ and $b$ are disjoint; and
\item if we choose a curve $\alpha$ in $a$ and a curve $\beta$ in $b$, then $S_{\{ \alpha, \beta \}}$ is connected.
\end{itemize}
Then the HBPs $\phi(a)$ and $\phi(b)$ are not equivalent.
\end{lem}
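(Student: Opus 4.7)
The plan is to argue by contradiction, assuming $\phi(a)$ and $\phi(b)$ are equivalent, and to construct a pentagon in $\calcp_n(S)$ whose image under $\phi$ violates Lemma~\ref{lem-nhbp-pen-neq}. Writing $a=\{\alpha_1,\alpha_2\}$ and $b=\{\beta_1,\beta_2\}$, the hypothesis that $S_{\{\alpha,\beta\}}$ is connected for every $\alpha\in a$, $\beta\in b$ first rules out any shared curve: if, for instance, $\alpha_1=\beta_1$, then choosing $\alpha=\alpha_2\in a$ and $\beta=\alpha_1\in b$ gives $S_{\{\alpha_2,\alpha_1\}}=S_a$, which is disconnected into the pair of pants cut off by $a$ and its complement. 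A short Euler-characteristic argument (together with the fact that a pants contains no essential simple closed curves) further shows that two disjoint 1-HBPs around the same boundary component must share a curve, so $a$ and $b$ encircle distinct boundary components of $S$, say $\partial_1$ and $\partial_2$ respectively.

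The main construction is a 5-tuple $\Pi=(A,a,b,C,B)$ defining a pentagon in $\calcp_n(S)$ of the type appearing in Lemma~\ref{lem-nhbp-pen-neq}, where $A=\{\alpha_1,\alpha_A\}$ and $B=\{\alpha_1,\alpha_B\}$ are non-separating 1-HBPs encircling $\partial_1$, and $C=\{\beta_1,\beta_C\}$ is a non-separating 1-HBP encircling $\partial_2$. The auxiliary curves $\alpha_A,\alpha_B,\beta_C$ must be chosen so that every adjacent pair of vertices of $\Pi$ is disjoint while every non-adjacent pair intersects; this translates into concrete intersection/disjointness data on the new curves relative to $\alpha_2,\beta_1,\beta_2$. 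The hypothesis that $S_{\{\alpha,\beta\}}$ is connected for each choice of $\alpha\in a$ and $\beta\in b$ supplies enough topological room in the complementary subsurface to realize all these conditions simultaneously while keeping $A,B,C$ non-separating. By construction, the edges $\{A,a\}$ and $\{A,B\}$ are rooted at $\alpha_1$ and the edge $\{b,C\}$ is rooted at $\beta_1$, so $\Pi$ satisfies the hypotheses of Lemma~\ref{lem-nhbp-pen-neq}.

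Applying $\phi$ to $\Pi$ yields a pentagon $\phi(\Pi)$ in $\calcp(S)$: its vertices are HBPs by Lemma~\ref{lem-hbp}, and superinjectivity preserves both the adjacency and the non-adjacency structure of a pentagon. By Lemma~\ref{lem-rooted}, the $2$-simplex $\{\phi(A),\phi(a),\phi(B)\}$ is rooted at some common curve $\alpha'\in\phi(a)$, and the edge $\{\phi(b),\phi(C)\}$ is rooted at some common curve $\beta'\in\phi(b)$. Lemma~\ref{lem-nhbp-pen-neq} applied to $\phi(\Pi)$ then forces $\alpha'$ and $\beta'$ to be disjoint but \emph{not} HBP-equivalent. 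However, the assumption $\pi(\phi(a))=\pi(\phi(b))$ means every curve of $\phi(a)$ has the same image under $\pi$ as every curve of $\phi(b)$, so $\pi(\alpha')=\pi(\beta')\neq\ast$ and $\alpha',\beta'$ \emph{are} HBP-equivalent---the desired contradiction. I expect the main technical obstacle to be the pentagon construction itself: coordinating the many simultaneous intersection and disjointness conditions on $\alpha_A,\alpha_B,\beta_C$ while keeping $A,B,C$ non-separating requires a careful topological surgery argument, and this is precisely where the connectedness hypothesis on $S_{\{\alpha,\beta\}}$ is used in an essential way.
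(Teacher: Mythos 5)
Your argument is correct and is essentially the paper's own proof: the paper likewise produces the pentagon of Figure \ref{fig-nhbp-pen}, with $a$ and $b$ placed as the vertices $y_1$ and $z_1$ and with the edges $\{ x, y_1\}$, $\{ x, y_2\}$ and $\{ z_1, z_2\}$ rooted, and then applies Lemma \ref{lem-nhbp-pen-neq} to the image pentagon exactly as you do. The one slip is that $\{ \phi(A), \phi(a), \phi(B)\}$ is not a $2$-simplex (the vertices $\phi(a)$ and $\phi(B)$ intersect); what you actually need is only that the two edges $\{ \phi(A), \phi(a)\}$ and $\{ \phi(A), \phi(B)\}$ are rooted, which Lemma \ref{lem-rooted} gives, and that their root curves coincide, which is the first conclusion of Lemma \ref{lem-nhbp-pen-neq}.
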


\begin{proof}
Using Figure \ref{fig-nhbp-pen}, we can find a pentagon $\Pi$ in $\calcp_n(S)$ defined by a 5-tuple $(x, y_1, z_1, z_2, y_2)$ such that
\begin{itemize}
\item $y_1=a$ and $z_1=b$;
\item any vertex of $\Pi$ corresponds to an HBP; and
\item any of the edges $\{ x, y_1\}$, $\{ x, y_2\}$ and $\{ z_1, z_2\}$ is rooted.
\end{itemize}
Applying Lemma \ref{lem-nhbp-pen-neq} to $\phi(\Pi)$, we obtain the lemma.
\end{proof}

\begin{lem}\label{lem-hbp-neq-p3}
Let $S=S_{g, p}$ be a surface with $g\geq 2$ and $p\geq 3$.
Then the following assertions hold:
\begin{enumerate}
\item Let $\phi \colon \calcp_n(S)\to \calcp(S)$ be a superinjective map.
If $a$ and $b$ are non-separating HBPs in $S$ which are disjoint, but not equivalent, then the HBPs $\phi(a)$ and $\phi(b)$ are not equivalent.
\item Let $\psi \colon \calcp_s(S)\to \calcp(S)$ be a superinjective map.
If $a$ and $b$ are separating HBPs in $S$ which are disjoint, but not equivalent, then the HBPs $\psi(a)$ and $\psi(b)$ are not equivalent.
\end{enumerate}
\end{lem}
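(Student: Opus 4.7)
The plan is to prove the lemma by contradiction in both parts. I treat (i) in detail; part (ii) will be parallel, using $\calcp_s(S)$ in place of $\calcp_n(S)$ and the analogous maximal-simplex structure from Proposition \ref{prop-max}. Assume for contradiction that $\pi(\phi(a))=\pi(\phi(b))$ in $\bar S$. I would first fix a maximal simplex $\sigma$ of $\calcp_n(S)$ containing $a$. By Lemma \ref{lem-root-curve}, $\sigma$ arises from $p+1$ pairwise HBP-equivalent curves $\alpha_0,\ldots,\alpha_p$ with $a=\{\alpha_0,\alpha_1\}$, and there exist curves $\beta_0,\ldots,\beta_p$ with $\phi(\{\alpha_j,\alpha_k\})=\{\beta_j,\beta_k\}$ for all distinct $j,k$. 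These $\beta_i$ are then pairwise HBP-equivalent, so $\pi(\beta_i)$ is a common curve $\bar z$ in $\bar S$; under the assumption, $\pi(\phi(b))=\bar z$ as well, and the two curves of $\phi(b)$ are disjoint from $\beta_0$ and $\beta_1$ by simpliciality.

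Next I would invoke Proposition \ref{prop-max}: the maximum size of a pairwise HBP-equivalent family of curves in $S$ is $p+1$. Thus each curve $y$ of $\phi(b)$ either coincides with some $\beta_i$ or intersects some $\beta_m$, for otherwise $\{y,\beta_0,\ldots,\beta_p\}$ would be a pairwise HBP-equivalent family of cardinality $p+2$. In the favorable case that both curves of $\phi(b)$ belong to $\{\beta_0,\ldots,\beta_p\}$, I obtain $\phi(b)=\phi(\{\alpha_i,\alpha_j\})$ for some $i,j$; the injectivity of $\phi$ on $\sigma$ from Lemma \ref{lem-root-curve} then forces $b=\{\alpha_i,\alpha_j\}\in\sigma$, so $b$ is HBP-equivalent to $a$, contradicting the hypothesis.

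The remaining case, in which some curve $y$ of $\phi(b)$ meets a $\beta_m$ with $m\geq 2$, is the main obstacle. Here I would first apply superinjectivity to the edge $\{b,\{\alpha_0,\alpha_m\}\}$ of $\calcp_n(S)$—whose image has intersecting vertices because $y$ meets $\beta_m$—to conclude that $b$ meets $\alpha_m$, since $b$ is disjoint from $\alpha_0$. I would then use the extra room afforded by $p\geq 3$ to construct an auxiliary HBP $c\in\calcp_n(S)$ sharing a curve with $a$, so that $\{a,c\}$ is rooted and, by Lemma \ref{lem-rooted}, $\{\phi(a),\phi(c)\}$ is rooted in $\calcp(S)$. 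Combining the rooted relation between $\phi(a)$ and $\phi(c)$ with the hypothetical equivalence of $\phi(a)$ and $\phi(b)$, together with superinjectivity and Proposition \ref{prop-max}, the aim is to force $\phi(c)$ to coincide with an HBP already in $\phi(\sigma)$, contradicting injectivity of $\phi$ on vertices.

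The key technical step, and the main place where the hypothesis $p\geq 3$ is essentially used, is to verify that such a witness $c$ exists uniformly across all topological configurations of the pair $(a,b)$—in contrast to the delicate pentagon construction needed when $p=2$ (Lemma \ref{lem-hbp-neq-p2}). The extra boundary components ensure enough curves isotopic to $\pi(a_1)$ in $\bar S$ but distinct from $a_1,a_2$ in $S$ to serve as the second curve of $c$, while arranging the desired intersection pattern with $b$.
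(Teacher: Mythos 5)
Your setup and the first half of the argument are fine: the dichotomy that each curve $y$ of $\phi(b)$ either coincides with some $\beta_i$ or intersects some $\beta_m$ with $m\geq 2$ follows correctly from Proposition \ref{prop-max} and the contradiction hypothesis, and the ``favorable'' case is handled correctly. But the remaining case --- which is exactly where the content of the lemma lies --- is not proved. You only announce an \emph{aim}: construct some HBP $c$ sharing a curve with $a$, make $\{\phi(a),\phi(c)\}$ rooted via Lemma \ref{lem-rooted}, and ``force $\phi(c)$ to coincide with an HBP already in $\phi(\sigma)$.'' No candidate $c$ is exhibited, no intersection pattern is specified, and no argument is given for why $\phi(c)$ would collide with anything; you yourself flag the existence of a uniform witness $c$ as the unverified key step. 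The one concrete deduction you do make in this case --- that $b$ meets $\alpha_m$ (which follows from simpliciality, not superinjectivity) --- does not constrain $\phi(b)$ any further. As written, this is a gap, not a proof.

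The hard case is an artifact of your normalization: you put $a$ inside the maximal family and leave $b$ free to intersect $\alpha_2,\ldots,\alpha_p$, so $\phi(b)$ may intersect the $\beta_m$'s. The paper does the opposite. It first replaces $a$ and $b$ by equivalent HBPs (legitimate because $\phi$ carries maximal simplices to maximal simplices, hence preserves equivalence of HBPs lying in one) so that $b$ is one of the $\binom{p}{2}$ HBPs of a family $s=\{\beta_1,\ldots,\beta_p\}$ of pairwise HBP-equivalent curves \emph{all disjoint from} $a$, with $a$ a $1$-HBP outside the family. Then $\phi(a)$ is disjoint from all of $\phi(\sigma)$, and the hypothetical equivalence of $\phi(a)$ and $\phi(b)$ forces, via Proposition \ref{prop-max}, that $\phi(a)$ shares a curve with some $\phi(b_j)$ for each rooted subsimplex $\sigma_j$; an auxiliary HBP $c$ with $i(c,a)=i(c,b_j')=0$ and $i(c,b_j)\neq 0$ (available because $p\geq 3$) shows via superinjectivity that this shared curve must be the root curve $\gamma_j$ of $\phi(\sigma_j)$, whence $\phi(a)\supset\{\gamma_1,\ldots,\gamma_p\}$ with $p\geq 3$ --- impossible for a pair of curves. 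To repair your version you would either have to carry out your collision argument in full, or choose the maximal simplex through $a$ (or, as the paper does, through $b$) so that all of its curves avoid the other HBP, which reduces everything to your favorable case; either way the missing existence/verification step must actually be supplied.
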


\begin{proof}
We prove assertion (i).
Assertion (ii) is proved along a verbatim argument.

Let $a$ and $b$ be non-separating HBPs in $S$ which are disjoint, but not equivalent.
Replacing $a$ and $b$ by HBPs equivalent to themselves, we may assume that $a$ is a 1-HBP, and that there exists a simplex of $\calc(S)$, $s=\{ \beta_1,\ldots, \beta_p\}$, such that 
\begin{itemize}
\item any curve in $s$ is disjoint from $a$;
\item any two curves in $s$ are HBP-equivalent; and
\item $b$ is an HBP of two curves in $s$.
\end{itemize}
Let $\sigma$ be the simplex of $\calcp_n(S)$ consisting of all HBPs of two curves in $s$.
For $j=1,\ldots, p$, we define $\sigma_j$ as the rooted subsimplex of $\sigma$ that consists of $p-1$ HBPs and whose root curve is equal to $\beta_j$.
Let $\gamma_j$ denote the root curve of $\phi(\sigma_j)$.
By Lemma \ref{lem-root-curve}, the equality $\phi(\{ \beta_j, \beta_k\})=\{ \gamma_j, \gamma_k\}$ holds for any distinct $j, k=1,\ldots, p$.

If $\phi(a)$ were equivalent to $\phi(b)$, then those two HBPs would be contained in a simplex of $\calcp(S)$ of maximal dimension.
By Proposition \ref{prop-max}, for any $j=1,\ldots, p$, there exists an HBP $b_j$ in $\sigma_j$ such that $\phi(a)$ and $\phi(b_j)$ share a curve.
Suppose that for some $j$, this shared curve is not equal to $\gamma_j$.
Pick an HBP $b_j'$ in $\sigma_j$ distinct from $b_j$, which exists because $p\geq 3$.
The inclusion $\phi(b_j)\subset \phi(a)\cup \phi(b_j')$ then holds.
This is a contradiction because for any $j$, there exists an HBP $c$ with $i(c, a)=i(c, b_j')=0$ and $i(c, b_j)\neq 0$.
It thus turns out that for any $j=1,\ldots, p$, $\phi(a)$ and $\phi(b_j)$ share $\gamma_j$.
This is a contradiction because $\phi(a)$ consists of two curves and we have $p\geq 3$.
Assertion (i) is proved.
\end{proof}

In the rest of this section, we discuss topological types of vertices preserved under any superinjective map from $\calcp_n(S)$ into $\calcp(S)$.

\begin{lem}\label{lem-phi-nshbp}
Let $S=S_{g, p}$ be a surface with $g\geq 2$ and $p\geq 2$. 
Then any superinjective map from $\calcp_n(S)$ into $\calcp(S)$ preserves non-separating HBPs.
\end{lem}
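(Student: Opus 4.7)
The goal is to strengthen Lemma~\ref{lem-hbp} from ``$\phi(a)$ is an HBP'' to ``$\phi(a)$ is a non-separating HBP'' for every non-separating HBP $a$ in $S$. In accordance with the Strategy outlined in the introduction, I would split the argument into the cases $p\geq 3$ and $p=2$, since the available technical tools (Lemma~\ref{lem-hbp-neq-p3} versus Lemma~\ref{lem-hbp-neq-p2}) differ fundamentally.

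For $p\geq 3$, the plan is to embed $a$ in a maximal simplex $\sigma$ of $\calcp_n(S)$. By Proposition~\ref{prop-max} applied in $\calcp_n(S)$, $\sigma$ consists of all HBPs among a collection $\{\alpha_0,\ldots,\alpha_p\}$ of pairwise HBP-equivalent non-separating curves. Lemma~\ref{lem-root-curve} then produces curves $\{\beta_0,\ldots,\beta_p\}$ with $\phi(\{\alpha_j,\alpha_k\})=\{\beta_j,\beta_k\}$ for all $j\neq k$. Because $\phi|_\sigma$ is injective and the two complexes have the same maximal dimension (Proposition~\ref{prop-max}), $\phi(\sigma)$ is itself a maximal simplex of $\calcp(S)$, so a second application of Proposition~\ref{prop-max} forces the $\beta_j$'s to be pairwise HBP-equivalent and hence all of the same separation type. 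If they are non-separating we are done for $a$. Otherwise all $\beta_j$'s are separating, and I would derive a contradiction by exploiting that the non-separation of the $\alpha_j$'s ensures $S_{\{\alpha_0,\ldots,\alpha_p\}}$ has a connected component $R$ of positive genus carrying the two outermost boundary components of the parallel family; inside $R$ one can build a non-separating HBP $b$ disjoint from every $\alpha_j$ and inequivalent to every HBP of $\sigma$. By Lemma~\ref{lem-hbp-neq-p3}(i) the image $\phi(b)$ is then an HBP disjoint from each $\phi(\{\alpha_j,\alpha_k\})$ and inequivalent to all of them. Under the separating hypothesis, however, $S_{\phi(\sigma)}$ breaks into strictly more components than $S_\sigma$, and the disjointness constraints together with the requirement that $\phi(b)$ consist of two HBP-equivalent curves in $\bar{S}$ distinct from those of $\phi(\sigma)$ cannot be met simultaneously; this is the contradiction.

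For $p=2$, the plan is to run a pentagon argument in the style of Lemma~\ref{lem-nhbp-pen-neq}. I would build a pentagon $\Pi$ in $\calcp_n(S)$ with $a$ as a vertex and the four other vertices chosen so that three of the edges of $\Pi$ are rooted as in the hypothesis of Lemma~\ref{lem-nhbp-pen-neq}, following the template of Figure~\ref{fig-nhbp-pen}; crucially, the root curve through $a$ and the root curve of the opposite rooted edge are chosen to be non-separating. Since $\phi$ sends non-separating HBPs to HBPs (Lemma~\ref{lem-hbp}) and preserves rooted simplices (Lemma~\ref{lem-rooted}), $\phi(\Pi)$ is a pentagon of HBPs in $\calcp(S)$ with the same rooted-edge pattern, and Lemma~\ref{lem-nhbp-pen-neq} pins down the combinatorial behavior of the resulting root curves. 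Combined with the observation that the genus-zero component cut off by $\phi(a)$ would have a different number of $\partial S$-components in the separating case, this constraint will force $\phi(a)$ to be non-separating.

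The main obstacle in both cases is the explicit construction of the auxiliary object---the non-separating HBP $b$ for $p\geq 3$, and the pentagon $\Pi$ for $p=2$---and the verification that its image is combinatorially incompatible with a separating $\phi(a)$. Fundamentally, the contradiction reflects the topological gap between the two components of $S_a$ when $a$ is non-separating and the three components of $S_{\phi(a)}$ when $\phi(a)$ is separating, which must show up as an excess of disjoint, pairwise inequivalent HBPs in the link of $\phi(a)$ that the link of $a$ cannot supply.
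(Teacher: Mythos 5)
Your setup for $p\geq 3$ (maximal simplex through $a$, Lemma \ref{lem-root-curve}, Proposition \ref{prop-max}, and the observation that the $\beta_j$ all have the same separation type) is fine and matches the paper's framework, but the object on which your contradiction rests does not exist. If $\{\alpha_0,\ldots,\alpha_p\}$ is a chain of pairwise HBP-equivalent non-separating curves, there is \emph{no} HBP $b$ that is disjoint from every $\alpha_j$ and inequivalent to every HBP of $\sigma$: both curves of such a $b$ would have to lie in the interior of the positive-genus complementary piece $R$, while every component of $\partial S$ lies in the connected genus-zero "chain region" $N$ bounded by $\alpha_0\sqcup\alpha_p\sqcup\partial S$. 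The genus-zero side of $b$ must contain a component of $\partial S$, hence all of $N$, and it is then obtained by gluing a subsurface of $R$ to $N$ along both $\alpha_0$ and $\alpha_p$; this forces that side to have genus at least one unless a curve of $b$ is isotopic to $\alpha_0$ or $\alpha_p$, in which case $b$ is equivalent to the HBPs of $\sigma$. This is exactly why the paper does something more delicate: it takes \emph{two} HBPs $\{\beta_1,\beta_2\}$ and $\{\beta_2,\beta_3\}$ sharing a curve $\beta_2$, where $\{\beta_1,\beta_2\}$ meets only $\alpha_0$ and $\{\beta_2,\beta_3\}$ meets only $\alpha_p$; only the shared curve $\beta_2$ is disjoint from the whole chain, and one deduces that its image $\delta_2$ is disjoint from all of $t=\{\gamma_0,\ldots,\gamma_p\}$ and (by Lemmas \ref{lem-hbp-neq-p2} and \ref{lem-hbp-neq-p3}) not HBP-equivalent to any curve of $t$. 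The contradiction in the separating case then comes from the position of $\delta_2$ relative to the $p$-HBP $\{\gamma_0,\gamma_p\}$, not from an excess of disjoint inequivalent HBPs.

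The $p=2$ branch has a second problem. Lemma \ref{lem-nhbp-pen-neq} only tells you that the two root curves through $a$ coincide and that this curve is disjoint from, but not HBP-equivalent to, the opposite root curve; nothing in that conclusion sees whether $\phi(a)$ is separating. The observation you want to combine it with is also false as stated: a separating $k$-HBP and a non-separating $k$-HBP cut off genus-zero pieces containing the \emph{same} number of components of $\partial S$; they differ only in whether the remaining, positive-genus part of the complement is connected. In fact the paper does not treat $p=2$ by a separate pentagon argument at all: the proof of Lemma \ref{lem-phi-nshbp} is uniform in $p\geq 2$, with the pentagon machinery entering only indirectly through Lemma \ref{lem-hbp-neq-p2} (which plays the role of Lemma \ref{lem-hbp-neq-p3}(i) when $p=2$). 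So both halves of your plan need the paper's $\beta_1,\beta_2,\beta_3$ device, or something equivalent to it, before the contradiction can be run.
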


\begin{proof}
The following argument appears in the proof of \cite[Lemma 3.14]{kida-yama}.
Let $s=\{\alpha_0, \alpha_1,\ldots,\alpha_p\}$ be the collection of non-separating curves in Figure \ref{fig-nshbp}.
\begin{figure}
\begin{center}
\includegraphics[width=6cm]{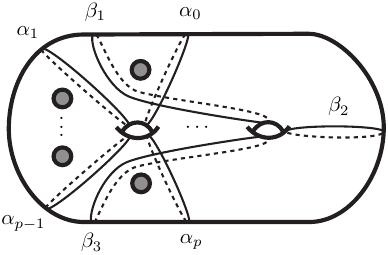}
\caption{}\label{fig-nshbp}
\end{center}
\end{figure}
We define $\sigma$ as the simplex of $\calcp_n(S)$ of maximal dimension that consists of HBPs of two curves in $s$.
Let $\beta_1$, $\beta_2$ and $\beta_3$ be the curves in $S$ in Figure \ref{fig-nshbp}. 
Note that $\beta_2$ is not HBP-equivalent to any curve of $s$, and that $S_{\{ \alpha_j, \beta_2\}}$ is connected for any $j=0,\ldots, p$. 
By Lemma \ref{lem-root-curve}, there exist curves $\gamma_j$ and $\delta_k$ in $S$ with
\[\phi(\{ \alpha_{j_1}, \alpha_{j_2}\})=\{ \gamma_{j_1}, \gamma_{j_2}\},\quad \phi(\{ \beta_{k_1}, \beta_{k_2}\})=\{ \delta_{k_1}, \delta_{k_2}\}\]
for any distinct $j_1, j_2=0,\ldots, p$ and any distinct $k_1, k_2=1, 2, 3$.
We put $t=\{ \gamma_0,\ldots, \gamma_p\}$.
Since $\{ \delta_1, \delta_2\}$ intersects $\gamma_0$ and is disjoint from any curve in $t\setminus \{ \gamma_0\}$ and since $\{ \delta_2, \delta_3\}$ intersects $\gamma_p$ and is disjoint from any curve in $t\setminus \{ \gamma_p\}$, we see that $\delta_2$ is disjoint from any curve in $t$.
By Lemmas \ref{lem-hbp-neq-p2} and \ref{lem-hbp-neq-p3}, $\delta_2$ is not HBP-equivalent to any curve of $t$.
It follows that $\{ \delta_2, \delta_1\}$ is a 1-HBP in $S$, and that there exists a $(p-1)$-HBP of two curves in $t\setminus \{ \gamma_0\}$ such that the other curves in it are contained in the holed sphere cut off by that $(p-1)$-HBP from $S$.
Similarly, $\{ \delta_2, \delta_3\}$ is a 1-HBP in $S$, and there exists a $(p-1)$-HBP of two curves in $t\setminus \{ \gamma_p\}$ such that the other curves in it are contained in the holed sphere cut off by that $(p-1)$-HBP from $S$.
It thus follows that $\{ \gamma_0, \gamma_p\}$ is a $p$-HBP in $S$.

We now assume that any curve of $t$ is separating in $S$.
If $\delta_2$ lies in the component of $S_{\{ \gamma_0, \gamma_p\}}$ of positive genus that contains $\gamma_0$ as a boundary component, then the HBP $\{ \delta_2, \delta_3\}$ cannot intersect $\gamma_p$, kept disjoint from any curve in $t\setminus \{ \gamma_p\}$. 
This is a contradiction.
Similarly, we can deduce a contradiction if we assume that $\delta_2$ lies in the component of $S_{\{ \gamma_0, \gamma_p\}}$ of positive genus that contains $\gamma_p$ as a boundary component.
The curve $\delta_2$ does not lie in the component of $S_{\{ \gamma_0, \gamma_p\}}$ of genus $0$ because any curve in that component either is HBP-equivalent to $\gamma_0$ or is an HBC in $S$.
We thus proved that any curve of $t$ is non-separating in $S$.
\end{proof}

\begin{lem}\label{lem-phi-nskhbp}
Let $S=S_{g, p}$ be a surface with $g\geq 2$ and $p\geq 2$.
Then for any integer $k$ with $1\leq k\leq p$, any superinjective map from $\calcp_n(S)$ into $\calcp(S)$ preserves non-separating $k$-HBPs.
\end{lem}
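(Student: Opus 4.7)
The plan is to extend the given $k$-HBP to a maximal-dimension simplex, apply the argument of Lemma \ref{lem-phi-nshbp} to pin down the extremes of the image chain, and then identify the intermediate chain positions by a refined choice of auxiliary curves modeled on the triple of Figure \ref{fig-nshbp}.

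Let $a=\{\alpha,\beta\}$ be a non-separating $k$-HBP. First I would extend $a$ to a collection $s=\{\alpha_0,\alpha_1,\ldots,\alpha_p\}$ of $p+1$ mutually HBP-equivalent, non-separating curves in $S$ arranged as a chain in $\bar S$ so that $a=\{\alpha_0,\alpha_k\}$, and so that each annulus in $\bar S$ between consecutive chain curves $\alpha_{i-1}$ and $\alpha_i$ contains exactly one boundary component $\partial_i$ of $S$. Under this arrangement $\{\alpha_j,\alpha_\ell\}$ is a $|\ell-j|$-HBP. Let $\sigma$ be the simplex of $\calcp_n(S)$ of maximal dimension consisting of all HBPs of two curves in $s$. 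Applying Lemma \ref{lem-root-curve} yields curves $\gamma_0,\gamma_1,\ldots,\gamma_p$ with $\phi(\{\alpha_j,\alpha_\ell\})=\{\gamma_j,\gamma_\ell\}$, and Lemma \ref{lem-phi-nshbp} ensures these are non-separating and mutually HBP-equivalent, hence form a chain in $\bar S$. The proof of Lemma \ref{lem-phi-nshbp} shows in addition that $\{\gamma_0,\gamma_p\}$ is a $p$-HBP, so $\gamma_0$ and $\gamma_p$ occupy the two extremes of the image chain; after reindexing I may assume $\gamma_0$ is at chain position $0$ and $\gamma_p$ at chain position $p$.

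The decisive step is to pin down the chain position of $\gamma_k$ as $k$; granted this, $\{\gamma_0,\gamma_k\}=\phi(a)$ realizes chain distance $k$ in the image chain, where every annular gap contains exactly one boundary component of $S$, and so $\phi(a)$ is a non-separating $k$-HBP. To do so I would replicate the argument of Lemma \ref{lem-phi-nshbp} with a triple adapted to each intermediate index. For $0<j<p$, choose $\beta_1^{(j)},\beta_2^{(j)},\beta_3^{(j)}$ in $S$ so that $\beta_2^{(j)}$ is non-separating, disjoint from every $\alpha_i$, and not HBP-equivalent to any curve of $s$, while $\{\beta_1^{(j)},\beta_2^{(j)}\}$ and $\{\beta_2^{(j)},\beta_3^{(j)}\}$ are non-separating 1-HBPs whose sphere pieces contain $\partial_j$ and $\partial_{j+1}$, respectively, and whose only intersections with $s$ are with the pairs $\alpha_{j-1},\alpha_j$ and $\alpha_j,\alpha_{j+1}$. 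Pushing this configuration through $\phi$ and invoking Lemmas \ref{lem-hbp-neq-p2} and \ref{lem-hbp-neq-p3} exactly as in the proof of Lemma \ref{lem-phi-nshbp} forces $\delta_2^{(j)}$ to be disjoint from every $\gamma_i$ and not HBP-equivalent to any $\gamma_i$; the resulting pair of 1-HBPs then sandwiches $\gamma_j$ strictly between $\gamma_{j-1}$ and $\gamma_{j+1}$ in the image chain, placing $\gamma_j$ at chain position $j$.

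The main obstacle will be the topological construction and verification of the auxiliary triples $(\beta_1^{(j)},\beta_2^{(j)},\beta_3^{(j)})$ for each $j$ and the uniform running of the $\delta$-analysis: one must confirm that the chosen 1-HBPs have the prescribed intersection pattern with $s$ and that the conclusions parallel to those of Lemma \ref{lem-phi-nshbp} (in particular, that the image HBPs are non-separating 1-HBPs rather than larger HBPs) follow from the disjointness of $\delta_2^{(j)}$ with $t=\{\gamma_0,\ldots,\gamma_p\}$ and its non-equivalence to any $\gamma_i$. Small values of $p$ (especially $p=2$) and the extremal positions $j\in\{0,p\}$ (already recorded in Lemma \ref{lem-phi-nshbp}) should be treated as separate base cases, after which the chain-position identification and hence the $k$-HBP conclusion are immediate.
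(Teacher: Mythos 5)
Your reduction to identifying chain positions in the image chain is reasonable, and your use of the proof of Lemma \ref{lem-phi-nshbp} to place $\gamma_0$ and $\gamma_p$ at the two extremes is correct. The gap is in your ``decisive step''. First, the auxiliary triple you prescribe for a genuinely intermediate index $j$ (that is, $2\leq j\leq p-1$) does not exist. Since $\beta_2^{(j)}$ is disjoint from every $\alpha_i$ and not HBP-equivalent to any curve of $s$, it must lie in the positive-genus component of $S_s$ (each gap of the chain is a pair of pants, whose essential curves are all isotopic in $S$ to curves of $s$). The pair of pants bounded by $\beta_1^{(j)}$, $\beta_2^{(j)}$ and $\partial_j$ is then a connected set meeting both the positive-genus component and the gap between $\alpha_{j-1}$ and $\alpha_j$; cutting $S$ along any $\alpha_i$ with $i\leq j-1$ together with any $\alpha_{i'}$ with $i'\geq j$ separates these two regions, so $\beta_1^{(j)}$ must intersect \emph{every} curve in the prefix $\alpha_0,\ldots,\alpha_{j-1}$ or every curve in the suffix $\alpha_j,\ldots,\alpha_p$ — not just $\alpha_{j-1}$ and $\alpha_j$. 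This is exactly why the configuration of Figure \ref{fig-nshbp} only reaches the two extremal boundary components. Second, even if the intersection pattern is relaxed, the deduction cannot run ``exactly as in the proof of Lemma \ref{lem-phi-nshbp}'': there the two auxiliary HBPs have disjoint exceptional sets $\{\gamma_0\}$ and $\{\gamma_p\}$, so their common curve $\delta_2$ is forced to be disjoint from all of $t$; your two HBPs share the exceptional curve $\gamma_j$, so you only obtain that $\delta_2^{(j)}$ is disjoint from $t\setminus\{\gamma_j\}$, and the conclusion that the image pairs are $1$-HBPs sandwiching $\gamma_j$ does not follow. Finally, the case $p=2$ (where no HBP of two curves of $t$ avoids both $\gamma_{j-1}$ and $\gamma_j$) is left entirely open.

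The paper avoids all of this by proving the case $k=1$ first, and by a different mechanism. Given a non-separating $1$-HBP $a$, it chooses a simplex $\sigma$ of $\calcp_n(S)$ consisting of all HBPs of two curves in a collection of $p$ mutually HBP-equivalent curves disjoint from $a$ (arranged so that the hypothesis of Lemma \ref{lem-hbp-neq-p2} holds when $p=2$). Lemmas \ref{lem-hbp-neq-p2} and \ref{lem-hbp-neq-p3} force $\phi(a)$ to be non-equivalent to every HBP of $\phi(\sigma)$; since the curves of $\phi(\sigma)$ form a chain of $p$ mutually HBP-equivalent curves whose gaps already absorb all but one component of $\partial S$, the holed sphere of $\phi(a)$ can contain only the remaining one, so $\phi(a)$ is a $1$-HBP. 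The general case is then immediate: write the $k$-HBP as $\{\gamma_0,\gamma_k\}$ for a chain $\gamma_0,\ldots,\gamma_k$ of consecutive $1$-HBPs, observe that each $\phi(\{\gamma_j,\gamma_{j+1}\})$ is a $1$-HBP, and apply Lemma \ref{lem-root-curve}: the image curves $\delta_0,\ldots,\delta_k$ are distinct and consecutive ones are at chain distance one, which forces monotonicity and hence chain distance $k$ between $\delta_0$ and $\delta_k$. If you want to keep your framework, the missing ingredient is precisely this separate argument for $k=1$; the intermediate chain positions then come for free.
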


\begin{proof}
Pick a non-separating 1-HBP $a$.
There exists a simplex $s=\{ \beta_1,\ldots, \beta_p\}$ of $\calc(S)$ such that
\begin{itemize}
\item any curve in $s$ is disjoint from $a$;
\item any two curves in $s$ are HBP-equivalent; and
\item the surface obtained by cutting $S$ along $\beta_1$ and a curve of $a$ is connected.
\end{itemize}
Let $\sigma$ be the simplex of $\calcp_n(S)$ consisting of all HBPs of two curves in $s$.
It follows that $\phi(a)$ is a 1-HBP because $\phi(a)$ and any HBP in $\phi(\sigma)$ are not equivalent by Lemmas \ref{lem-hbp-neq-p2} and \ref{lem-hbp-neq-p3}, and because $\phi(\sigma)$ consists of all HBPs of two curves in a collection of mutually HBP-equivalent $p$ curves.

We have shown that $\phi$ preserves non-separating 1-HBPs.
Let $k$ be an integer with $2\leq k\leq p$.
For any non-separating $k$-HBP $c$, there exist curves $\gamma_0,\ldots, \gamma_k$ such that the equality $c=\{ \gamma_0, \gamma_k\}$ holds and for any $j$, $\{ \gamma_j, \gamma_{j+1}\}$ is a 1-HBP.
It follows that $\phi(\{ \gamma_j, \gamma_{j+1}\})$ is a 1-HBP.
Using Lemma \ref{lem-root-curve}, we see that $\phi(c)$ is a $k$-HBP.
\end{proof}

\begin{lem}\label{lem-phi-jhbc}
Let $S=S_{g, p}$ be a surface with $g\geq 2$ and $p\geq 2$.
Then for any integer $j$ with $2\leq j\leq p$, any superinjective map from $\calcp_n(S)$ into $\calcp(S)$ preserves $j$-HBCs.
\end{lem}

We first show this lemma when $p=2$, using the following observation on root curves of edges of pentagons.

\begin{lem}[\ci{Lemma 4.6}{kida-yama}]\label{lem-pent-root}
Let $S=S_{g, 2}$ be a surface with $g\geq 2$, and let $(a, b_1, c_1, c_2, b_2)$ be a $5$-tuple defining a pentagon in $\calcp_n(S)$ such that
\begin{itemize}
\item $b_1$ and $b_2$ are 2-HBPs; $c_1$ and $c_2$ are 1-HBPs; and
\item any of the three edges $\{ b_1, c_1\}$, $\{ c_1, c_2\}$ and $\{ c_2, b_2\}$ is rooted. 
\end{itemize}
Then the root curves of the three edges in the second condition are equal.
\end{lem}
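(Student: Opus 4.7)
The plan is to exploit the fact that the roots of consecutive rooted edges are forced into the HBP at the ``hinge'' of the pentagon. Since both $\alpha_1$ and $\beta$ are curves belonging to the two-element set $c_1$, either $\alpha_1 = \beta$ or $c_1 = \{\alpha_1, \beta\}$, and analogously for $\beta$, $\alpha_2$, $c_2$. The hypothesis is symmetric under the $\mathbb{Z}/2$-involution reversing the cyclic order of the pentagon, which swaps the triples $(\alpha_1, b_1, c_1) \leftrightarrow (\alpha_2, b_2, c_2)$ while fixing $\beta$, so the conclusion reduces to deriving a contradiction from the single assumption $c_1 = \{\alpha_1, \beta\}$ with $\alpha_1 \neq \beta$.

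Writing $b_i = \{\alpha_i, b_i^*\}$ for $i = 1, 2$, I would split according to $\alpha_2$. In \textbf{Subcase A}, where $\alpha_2 = \beta$, the pentagon conditions $i(a, b_1) = i(a, b_2) = 0$ force $a$ to be disjoint from both $\alpha_1 \in b_1$ and $\beta \in b_2$; combining with $c_1 = \{\alpha_1, \beta\}$ yields $i(a, c_1) = 0$, contradicting the non-edge condition $i(a, c_1) \neq 0$. In \textbf{Subcase B}, where $\alpha_2 \neq \beta$ and thus $c_2 = \{\beta, \alpha_2\}$ with $\alpha_2 \neq \alpha_1$, the 1-HBP structures of $c_1$ and $c_2$ together with the absence of essential closed curves in a pair of pants force $c_1$ and $c_2$ to encircle distinct boundary components of $S = S_{g, 2}$, so that $\alpha_1, \beta, \alpha_2$ are three mutually disjoint, pairwise HBP-equivalent non-separating curves cobounding a 4-holed sphere $W := P_1 \cup P_2$ that contains both $\partial_1$ and $\partial_2$.

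In Subcase B the pentagon constraints additionally force $i(b_1^*, \alpha_2) > 0$ and $i(b_2^*, \alpha_1) > 0$, and by analysing the 4-holed spheres cobounded by the 2-HBPs $b_1$ and $b_2$ one concludes that $\{b_1^*, \beta\}$ is a 1-HBP encircling $\partial_2$ and $\{b_2^*, \beta\}$ is a 1-HBP encircling $\partial_1$. The vertex $a$ of the pentagon must be disjoint from $\alpha_1, \alpha_2, b_1^*, b_2^*$ yet intersect $\beta$; these constraints force the curve(s) of $a$ to lie inside $W$ as essential curves there, disjoint from the arc systems $b_1^* \cap P_2$ (arcs from $\alpha_2$ to $\alpha_2$ in the pair of pants $P_2$) and $b_2^* \cap P_1$ (arcs from $\alpha_1$ to $\alpha_1$ in $P_1$). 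A combinatorial case analysis on the isotopy types of these arcs and of essential curves in the 4-holed sphere $W$ shows that no such $a$ can exist, yielding the desired contradiction. The main obstacle is carrying out this final combinatorial analysis, which requires systematically comparing each of the three isotopy types of essential curves in $W$ (the ``$\beta$-type'' separating $\{\alpha_1, \partial_1\}$ from $\{\alpha_2, \partial_2\}$, the ``type separating $\{\alpha_1, \alpha_2\}$ from $\{\partial_1, \partial_2\}$'', and the ``type separating $\{\alpha_1, \partial_2\}$ from $\{\alpha_2, \partial_1\}$'') against the possible configurations of the arcs cut out by $b_1^*$ and $b_2^*$ inside $P_2$ and $P_1$.
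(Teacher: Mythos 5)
First, a remark on the comparison: this paper does not actually prove Lemma \ref{lem-pent-root}; it imports it as Lemma 4.6 of \cite{kida-yama}, so there is no in-text proof to measure you against. Judged on its own terms, your skeleton is sound. The symmetry reduction to ``derive a contradiction from $\alpha_1\neq\beta$, i.e.\ $c_1=\{\alpha_1,\beta\}$'' is valid, Subcase A is complete and correct ($i(a,b_1)=i(a,b_2)=0$ forces $a$ disjoint from $\alpha_1$ and from $\beta=\alpha_2\in b_2$, hence $i(a,c_1)=0$), and every intermediate claim in Subcase B checks out: $\alpha_1,\beta,\alpha_2$ are distinct, mutually disjoint and HBP-equivalent; the two pairs of pants $P_1,P_2$ must contain distinct components of $\partial S$ (an essential curve in a pair of pants is boundary-parallel, and the two pants cannot share a component of $\partial S$ since that component has only one side in $S$); the pentagon conditions $i(b_1,c_2)\neq 0$, $i(b_1,c_1)=0$ do force $i(b_1^*,\alpha_2)\neq 0$, and $\{b_1^*,\beta\}$ is indeed a $1$-HBP cutting off a pair of pants containing $\partial_2$.

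The genuine gap is that the decisive contradiction in Subcase B is asserted, not proved: you name the ``final combinatorial analysis'' as the main obstacle and stop there. It does close, and much more cheaply than a case analysis over the three curve types in $W$. Since $a$ is disjoint from $\alpha_1\in b_1$ and $i(a,c_1)\neq 0$, some curve $\delta$ of $a$ satisfies $i(\delta,\beta)\neq 0$; being disjoint from $\alpha_1\cup\alpha_2=\partial W\cap{\rm int}(S)$, this $\delta$ is contained in $W$, so in minimal position $\delta\cap P_2$ is a non-empty union of essential arcs in $P_2$ with both endpoints on $\beta$ ($\delta$ is disjoint from $\alpha_2$ and $\partial_2$). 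On the other hand $i(b_1^*,\alpha_2)\neq 0$ and $i(b_1^*,\beta)=i(b_1^*,\alpha_1)=0$ force $b_1^*\cap P_2$ to be a non-empty union of essential arcs with both endpoints on $\alpha_2$. In a pair of pants with boundary $\beta\cup\alpha_2\cup\partial_2$ there is a unique isotopy class of essential arc from $\beta$ to itself and a unique one from $\alpha_2$ to itself, and they cannot be realized disjointly: an arc from $\beta$ to $\beta$ disjoint from the $\alpha_2$--$\alpha_2$ arc would lie in the annular component containing $\beta$ of the cut-open pants and hence be boundary-parallel. This contradicts $i(a,b_1)=0$. So neither $b_2^*$ nor the trichotomy of essential curves in $W$ is needed; with this half-page inserted, your argument is a complete proof, in the same style as the arc-in-pants arguments the paper does carry out (e.g.\ Lemma \ref{lem-nhbp-pen-neq} and Proposition \ref{prop-n-hex}).
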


\begin{proof}[Proof of Lemma \ref{lem-phi-jhbc} when $p=2$]
Pick a 2-HBC $a$ in $S$, and choose a pentagon $\Pi$ containing $a$ and drawn as in Figure \ref{fig-nhbp-c-pen}.
\begin{figure}
\begin{center}
\includegraphics[width=10cm]{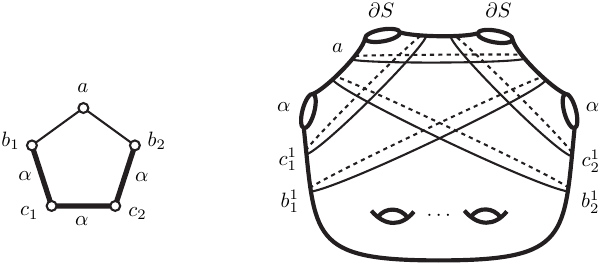}
\caption{A pentagon in $\calcp_n(S_{g, 2})$ with $b_j=\{ \alpha, b_j^1\}$ and $c_j=\{ \alpha, c_j^1\}$ for $j=1, 2$.}\label{fig-nhbp-c-pen}
\end{center}
\end{figure}
Let $(a, b_1, c_1, c_2, b_2)$ denote the 5-tuple defining $\Pi$.
We note that $b_1$ and $b_2$ are 2-HBPs and that $c_1$ and $c_2$ are 1-HBPs. 
By Lemmas \ref{lem-phi-nskhbp} and \ref{lem-pent-root}, the root curves of the three edges $\{ \phi(b_1), \phi(c_1)\}$, $\{ \phi(c_1), \phi(c_2)\}$ and $\{ \phi(c_2), \phi(b_2)\}$ are equal. 
Let $\beta$ denote the common curve. 

If $\phi(a)$ were not a 2-HBC, then $\phi(a)$ would be a 1-HBP and share a curve in $S$ with each of $\phi(b_1)$ and $\phi(b_2)$. 
We define $b_1^2$ and $b_2^2$ as the curves of $\phi(b_1)$ and $\phi(b_2)$ distinct from $\beta$, respectively.
Since we have $i(b_1^2, b_2^2)\neq 0$, $\phi(a)$ has to contain $\beta$.
This contradicts Lemma \ref{lem-no-pent}.
It therefore turns out that $\phi(a)$ is a 2-HBC, and the lemma follows.
\end{proof}

Before giving a proof of Lemma \ref{lem-phi-jhbc} when $p\geq 3$, let us prove the following:

\begin{lem}\label{lem-n-well-p3}
Let $S=S_{g, p}$ be a surface with $g\geq 2$ and $p\geq 3$, and let $\phi \colon \calcp_n(S)\to \calcp(S)$ be a superinjective map.
For any non-separating curve $\alpha$ in $S$, there exists a unique non-separating curve $\beta$ in $S$ such that for any HBP $a$ in $S$ containing $\alpha$, the HBP $\phi(a)$ contains $\beta$.
\end{lem}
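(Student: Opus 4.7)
The plan is to fix a max-dimensional simplex of $\calcp_n(S)$ whose underlying $(p+1)$-curve configuration (in the sense of Proposition \ref{prop-max}) contains $\alpha$, use Lemma \ref{lem-root-curve} to extract a candidate curve $\beta$, and then verify (i) $\beta$ depends only on $\alpha$ and not on the chosen configuration, and (ii) every HBP containing $\alpha$ maps to an HBP containing $\beta$.

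First I would choose a collection $\{\alpha = \alpha_0, \alpha_1, \ldots, \alpha_p\}$ of $p+1$ mutually disjoint, mutually HBP-equivalent non-separating curves in $S$; such a collection exists by taking $p$ additional parallel copies of $\alpha$ in $\bar{S}$ with the $p$ boundary components of $S$ distributed one per gap. Let $\sigma$ be the associated max-dimensional simplex of $\calcp_n(S)$; Lemma \ref{lem-root-curve} produces curves $\gamma_0, \ldots, \gamma_p$ with $\phi(\{\alpha_i, \alpha_j\}) = \{\gamma_i, \gamma_j\}$ for all $i \neq j$, and I set $\beta := \gamma_0$. By Lemma \ref{lem-phi-nshbp} and the injectivity of $\phi$, $\beta$ is non-separating.

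The substance is showing well-definedness of $\beta$. Given two such configurations $C = \{\alpha, \alpha_1, \ldots, \alpha_p\}$ and $C' = \{\alpha, \alpha_1', \ldots, \alpha_p'\}$ with associated $\gamma_0$ and $\gamma_0'$, the base case is when $C$ and $C'$ share $\alpha$ together with at least two other curves, say $\alpha_j = \alpha_j'$ and $\alpha_k = \alpha_k'$ with $j \neq k$. The HBPs $\{\alpha, \alpha_j\}$ and $\{\alpha, \alpha_k\}$ then lie in both $\sigma$ and $\sigma'$, giving the set identities $\{\gamma_0, \gamma_j\} = \{\gamma_0', \gamma_j'\}$ and $\{\gamma_0, \gamma_k\} = \{\gamma_0', \gamma_k'\}$. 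If $\gamma_0 \neq \gamma_0'$, each identity forces $\gamma_0 \in \{\gamma_j', \gamma_k'\}$; combining them yields $\gamma_j' = \gamma_0 = \gamma_k'$, contradicting that distinct indices in a single configuration produce distinct $\gamma$-curves (by Lemma \ref{lem-root-curve} applied to $\sigma'$). Hence $\gamma_0 = \gamma_0'$. For the general case, I would parametrize max-dimensional configurations containing $\alpha$ by orderings of the $p$ boundary components of $S$ in the gaps of the chain of parallel copies around $\pi(\alpha)$ in $\bar{S}$, and observe that any two such orderings are connected by a sequence of adjacent transpositions (acting separately on the two sides of $\alpha$ in the chain if $\alpha$ is not at an end). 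Each adjacent transposition changes exactly one of the $p$ non-$\alpha$ curves, so consecutive configurations in the sequence share $\alpha$ and $p-1 \geq 2$ other curves, and the base case propagates $\gamma_0$ along the sequence.

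Once well-definedness is established, for any HBP $a = \{\alpha, \alpha'\}$ containing $\alpha$ I would extend $\{\alpha, \alpha'\}$ to a max-dimensional configuration $C''$, apply Lemma \ref{lem-root-curve} to $C''$, and conclude that $\phi(a)$ contains the $\gamma_0$-curve associated to $C''$, which equals $\beta$ by well-definedness. Uniqueness is immediate: if another non-separating curve $\beta'$ also satisfied the conclusion, then $\phi(a) \supseteq \{\beta, \beta'\}$ for every HBP $a$ containing $\alpha$; if $\beta \neq \beta'$ this forces $\phi(a) = \{\beta, \beta'\}$ for all such $a$, contradicting the injectivity of $\phi$ since there are many distinct HBPs containing $\alpha$. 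The main obstacle is the combinatorial connectivity step for well-definedness; the hypothesis $p \geq 3$ enters precisely so that $p-1 \geq 2$, the minimum required by the base-case analysis.
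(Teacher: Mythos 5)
Your overall skeleton is the same as the paper's: extract a candidate $\beta$ from the image of a rooted configuration containing $\alpha$ (the paper uses a rooted edge of two disjoint HBPs and Lemma \ref{lem-rooted}, you use a maximal configuration and Lemma \ref{lem-root-curve}), prove well-definedness by a chain-connectivity argument in which consecutive configurations share enough HBPs to force the same root image, and get uniqueness from injectivity. The base case of your well-definedness argument is correct, and $p\geq 3$ enters for exactly the same reason as in the paper (one needs at least two HBPs besides the one being exchanged).

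The genuine gap is in your connectivity step. Maximal configurations containing $\alpha$ are \emph{not} parametrized by orderings of the $p$ boundary components in the gaps of a chain of parallel copies of $\pi(\alpha)$. All curves of such a configuration lie in the fiber $\pi^{-1}(\pi(\alpha))$, which is an infinite set: for a fixed pair of boundary components to be encircled, there are infinitely many non-isotopic curves $\alpha'$ with $\{\alpha,\alpha'\}$ a 1-HBP of the prescribed type, differing by point-pushing maps, and the full subcomplex of $\calc(S)$ spanned by $\pi^{-1}(\pi(\alpha))$ is an infinite tree (Theorem 7.1 in \cite{kls}). Consequently your finite combinatorial model (adjacent transpositions of an ordering) does not see most of the configurations, and connectivity of the exchange graph on configurations containing $\alpha$ does not follow from connectivity of a transposition graph. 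This connectivity statement is precisely the nontrivial input that the paper does not reprove but imports from Lemmas 4.5 and 5.1 of \cite{kida-yama} (connectivity of links of vertices in the fibers of $\pi$, phrased there as a chain of rooted 2-simplices joining any two rooted edges with root curve $\alpha$). Without that input, or an argument of comparable substance, your proof of well-definedness of $\beta$ is incomplete; the remaining steps (extending an arbitrary HBP containing $\alpha$ to a maximal configuration, and uniqueness) are fine.
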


\begin{proof}
Pick a non-separating curve $\alpha$ in $S$.
Choosing two disjoint and distinct HBPs $a$, $b$ in $S$ containing $\alpha$, we define $\beta$ as the root curve of the rooted edge $\{ \phi(a), \phi(b)\}$.
The curve $\beta$ depends only on $\alpha$ thanks to Lemma \ref{lem-rooted} and the following fact: For any two rooted edges $\{ a_1, b_1\}$ and $\{ a_2, b_2\}$ of $\calcp_n(S)$ whose root curves are equal to $\alpha$, there exists a sequence of rooted 2-simplices of $\calcp_n(S)$, $\sigma_1,\ldots, \sigma_m$, such that $a_1, b_1\in \sigma_1$; $a_2, b_2\in \sigma_m$; and $\sigma_j\cap \sigma_{j+1}$ is an edge of $\calcp_n(S)$ for any $j=1,\ldots, m-1$.
Existence of such a sequence follows from \cite[Lemmas 4.5 and 5.1]{kida-yama}.
Uniqueness of $\beta$ follows from injectivity of $\phi$.
\end{proof}

\begin{proof}[Proof of Lemma \ref{lem-phi-jhbc} when $p\geq 3$]
Let $\alpha$ be a $j$-HBC in $S$.
We prove that $\phi(\alpha)$ is a $j$-HBC in $S$, by induction on $j$.

Let us assume $j=2$.
Assuming that $\phi(\alpha)$ is an HBP, we deduce a contradiction.
Pick a simplex $s=\{ \beta_1,\ldots, \beta_p\}$ of $\calc(S)$ such that
\begin{itemize}
\item any curve of $s$ is non-separating in $S$ and is disjoint from $\alpha$; and
\item $\{ \beta_1, \beta_2\}$ is a 2-HBP in $S$, and $\{ \beta_k, \beta_{k+1}\}$ is a 1-HBP in $S$ for any $k=2,\ldots, p-1$
\end{itemize}
(see Figure \ref{fig-max-pf}).
\begin{figure}
\begin{center}
\includegraphics[width=5.5cm]{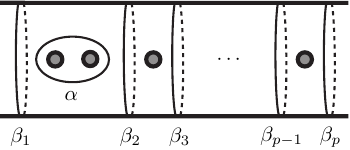}
\caption{}\label{fig-max-pf}
\end{center}
\end{figure}
By Lemma \ref{lem-root-curve}, there exists a simplex $t=\{ \gamma_1,\ldots, \gamma_p\}$ of $\calc(S)$ with $\phi(\{ \beta_k, \beta_l\})=\{ \gamma_k, \gamma_l\}$ for any distinct $k, l=1,\ldots, p$.
By Lemma \ref{lem-phi-nskhbp}, $\{ \gamma_1, \gamma_2\}$ is a 2-HBP in $S$, and $\{ \gamma_k, \gamma_{k+1}\}$ is a 1-HBP for any $k=2,\ldots, p-1$.
The HBP $\phi(\alpha)$ is equivalent to the HBP $\{ \gamma_1, \gamma_p\}$ because the latter is a $p$-HBP and is disjoint from $\phi(\alpha)$.
By Proposition \ref{prop-max}, $\phi(\alpha)$ contains at least one curve in $t$, say $\gamma_{k_0}$.
Pick $k_1\in \{ 1, \ldots, p\}\setminus \{ k_0\}$, and choose a curve $\delta$ in $S$ such that
\begin{itemize}
\item $i(\delta, \beta_{k_0})\neq 0$ and $i(\delta, \alpha)=i(\delta, \beta_k)=0$ for any $k\in \{ 1,\ldots, p\}\setminus \{ k_0\}$; and
\item $\{ \beta_{k_1}, \delta \}$ is an HBP in $S$.
\end{itemize}
It follows that $\phi(\{ \beta_{k_1}, \delta\})$ is disjoint from $\phi(\alpha)$ and intersects $\{ \gamma_{k_1}, \gamma_{k_0}\}$.
The HBP $\phi(\{ \beta_{k_1}, \delta\})$ contains $\gamma_{k_1}$ by Lemma \ref{lem-n-well-p3}.
The curve of $\phi(\{ \beta_{k_1}, \delta\})$ distinct from $\gamma_{k_1}$ thus intersects $\gamma_{k_0}$.
This is a contradiction because $\gamma_{k_0}$ belongs to $\phi(\alpha)$.

We have proved that $\phi(\alpha)$ is an HBC.
Since $\phi(\alpha)$ is disjoint from the 2-HBP $\{ \gamma_1, \gamma_2\}$ and the 1-HBP $\{ \gamma_k, \gamma_{k+1}\}$ for any $k=2,\ldots, p-1$, it has to be a 2-HBC.

We next assume $j\geq 3$.
Pick a $(j-1)$-HBC $\beta$ in $S$ disjoint from $\alpha$ and contained in the holed sphere cut off by $\alpha$ from $S$.
The hypothesis of the induction implies that $\phi(\beta)$ is also a $(j-1)$-HBC in $S$.
Let $Q$ denote the component of $S_{\beta}$ of positive genus, and let $R$ denote the component of $S_{\phi(\beta)}$ of positive genus.
The map $\phi$ induces a superinjective map from $\calcp_n(Q)$ into $\calcp(R)$.
Since $\alpha$ is a 2-HBC in $Q$, the curve $\phi(\alpha)$ is a 2-HBC in $R$ by Lemma \ref{lem-phi-jhbc} in the case of $p=2$.
It thus turns out that $\phi(\alpha)$ is a $j$-HBC in $S$.
\end{proof}

The following lemma is an immediate consequence of Lemmas \ref{lem-phi-nshbp} and \ref{lem-phi-jhbc}.

\begin{lem}\label{lem-bn-b}
Let $S=S_{g, p}$ be a surface with $g\geq 2$ and $p\geq 2$, and let $\phi \colon \calcp_n(S)\to \calcp(S)$ be a superinjective map.
Then the inclusion $\phi(\calcp_n(S))\subset \calcp_n(S)$ holds.
\end{lem}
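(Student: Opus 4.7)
The plan is to reduce this immediately to the two preceding lemmas. Since $\calcp_n(S)$ is a full subcomplex of $\calcp(S)$ (by its definition in Section \ref{subsec-cpx}), a simplex $\sigma$ of $\calcp(S)$ lies in $\calcp_n(S)$ precisely when every vertex of $\sigma$ belongs to $V_c(S) \sqcup V_{np}(S)$. Since $\phi$ is simplicial, it therefore suffices to verify that $\phi$ carries every vertex of $\calcp_n(S)$ into $V_c(S) \sqcup V_{np}(S)$.

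The vertex set of $\calcp_n(S)$ is the disjoint union $V_c(S) \sqcup V_{np}(S)$, so I would argue separately in the two cases. If $v \in V_c(S)$, then $v$ is a $j$-HBC for some integer $j$ with $2 \leq j \leq p$, and Lemma \ref{lem-phi-jhbc} gives that $\phi(v)$ is again a $j$-HBC, hence lies in $V_c(S)$. If $v \in V_{np}(S)$, then $v$ is a non-separating HBP, and Lemma \ref{lem-phi-nshbp} gives that $\phi(v)$ is a non-separating HBP, hence lies in $V_{np}(S)$.

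Combining the two cases, every vertex of $\calcp_n(S)$ is sent by $\phi$ to a vertex of $V_c(S)\sqcup V_{np}(S)$, and by fullness of $\calcp_n(S)$ inside $\calcp(S)$ the image of every simplex of $\calcp_n(S)$ lies in $\calcp_n(S)$. There is no real obstacle here; all the work has already been done in Lemmas \ref{lem-phi-nshbp} and \ref{lem-phi-jhbc}.
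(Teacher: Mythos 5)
Your proposal is correct and matches the paper exactly: the paper states this lemma as an immediate consequence of Lemmas \ref{lem-phi-nshbp} and \ref{lem-phi-jhbc}, which is precisely the vertex-by-vertex case split you carry out (using that every HBC is a $j$-HBC for some $2\leq j\leq p$ and that $\calcp_n(S)$ is a full subcomplex). Nothing is missing.
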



\section{Superinjective maps from $\calcp_s(S)$ into itself}\label{sec-bs-bs}

We discuss topological types of vertices preserved under any superinjective map from $\calcp_s(S)$ into itself.
Throughout this section, we mean by an HBP a separating one, unless otherwise stated.

We mean by a {\it hexagon} in $\calcp(S)$ a subgraph of $\calcp(S)$ consisting of six vertices $v_1,\ldots, v_6$ with $i(v_k,v_{k+1})=0$, $i(v_k,v_{k+2})\ne 0$ and $i(v_k,v_{k+3})\ne 0$ for any $k$ mod $6$.
In this case, let us say that the hexagon is defined by the $6$-tuple $(v_1,\ldots,v_6)$.
A hexagon in $\calcp_s(S)$ (resp.\ $\calcp_n(S)$) is defined as a hexagon in $\calcp(S)$ any of whose vertices belongs to $\calcp_s(S)$ (resp.\ $\calcp_n(S)$).
Examples of hexagons in $\calcp(S)$ are drawn in Figures \ref{fig-shbp-hex}, \ref{fig-nhbp-c-hex} and \ref{fig-shbp-c-hex}.
\begin{figure}
\begin{center}
\includegraphics[width=12cm]{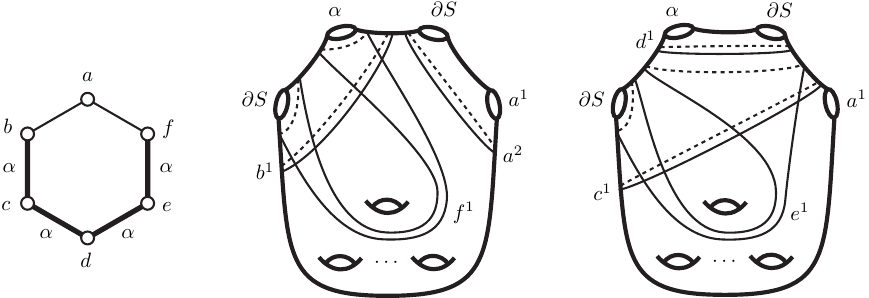}
\caption{A hexagon in $\calcp_s(S_{g, 2})$ with $g\geq 3$ and $a=\{ a^1, a^2\}$, $b=\{ \alpha, b^1\}$, $c=\{ \alpha, c^1\},\ldots, f=\{ \alpha, f^1\}$.}\label{fig-shbp-hex}
\end{center}
\end{figure}

\begin{prop}\label{prop-s-hex-root}
Let $S=S_{g, 2}$ be a surface with $g\geq 2$. Let $(a, b, c, d, e, f)$ be a 6-tuple defining a hexagon in $\calcp_s(S)$ such that
\begin{itemize}
\item any of $a,\ldots, f$ is an HBP; and
\item any of the four edges $\{ b, c\}$, $\{ c, d\}$, $\{ d, e\}$ and $\{ e, f\}$ is rooted.
\end{itemize}
Then the following assertions hold:
\begin{enumerate}
\item The HBP $a$ is equivalent to neither $b$ nor $f$. In particular, we have $g\geq 3$.
\item Any of $a$, $b$, $d$ and $f$ is a 1-HBP, and any of $c$ and $e$ is a 2-HBP.
\end{enumerate}
\end{prop}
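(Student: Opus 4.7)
My approach exploits that each rooted edge $\{x,y\}$ in $\calcp_s(S)$ forces its two HBPs to be \emph{equivalent}: if the common ``root'' curve is $\mu$, then $\pi(x)=\{\pi(\mu)\}=\pi(y)$. The four rooted edges in the hypothesis therefore give $b\sim c\sim d\sim e\sim f$, and all curves appearing in these five HBPs lie in the single fibre $\pi^{-1}(\gamma_0)$ for some $\gamma_0\in V(\bar S)$. A structural lemma I will establish at the outset: in $S_{g,2}$, any pairwise disjoint collection of distinct separating curves in a fixed HBP-equivalence class has cardinality at most $3$. Indeed, in $\bar S$ such curves are $n$ parallel copies of one curve, dividing $\bar S$ into $n-1$ annular regions between consecutive copies; for the curves to be pairwise non-isotopic in $S$, each annulus must contain at least one of the two capping disks, so $n-1\leq 2$.

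For part~(i), I argue by contradiction; assume $a\sim b$ (the case $a\sim f$ is symmetric). Then $a\sim b\sim\cdots\sim f$, so every hexagon vertex has $\pi$-image $\gamma_0$. For any two adjacent vertices $v_k,v_{k+1}$, the condition $i(v_k,v_{k+1})=0$ together with the structural lemma (applied to the four curves of $v_k\cup v_{k+1}$) forces $v_k$ and $v_{k+1}$ to share a curve, so every edge of the hexagon is rooted. Writing $\rho_0,\ldots,\rho_5$ for the six root curves, consecutive $\rho_{k-1},\rho_k$ both lie in the single HBP $v_k$ and are therefore disjoint and HBP-equivalent; the lemma then forces at most three distinct values among the six roots. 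An enumeration of the possible assignments of these values to the cyclic sequence of six edges, combined with the requirement that the six hexagon vertices be pairwise distinct HBPs, produces a contradiction in every case. The statement ``$g\geq 3$'' follows by tightening the same line at $g=2$: the two outer regions in the chain decomposition of three pairwise disjoint HBP-equivalent separating curves are then one-holed tori, in each of which the boundary curve is the unique HBP-equivalent representative; this extra rigidity cuts $\pi^{-1}(\gamma_0)$ down enough that no hexagon satisfying the hypotheses can exist in $S_{2,2}$ at all.

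For part~(ii), write $\rho_{bc},\rho_{cd},\rho_{de},\rho_{ef}$ for the four root curves. The triples $\{\rho_{bc},\rho_{cd},\rho_{de}\}$ and $\{\rho_{cd},\rho_{de},\rho_{ef}\}$ are pairwise disjoint and HBP-equivalent, so each has cardinality at most $3$. When a triple has three distinct elements, the structural lemma yields a linear chain in $\bar S$ whose two middle annular regions each contain exactly one capping disk; the topological type of each pairwise HBP (1-HBP or 2-HBP) is determined by which pair of chain curves forms it. I then enumerate cases based on (a) which coincidences hold among the four roots and (b) where $\rho_{cd},\rho_{de}$ sit in the two chains. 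The intersection conditions $i(v_k,v_{k+2})\neq 0$ and $i(v_k,v_{k+3})\neq 0$, combined with the non-equivalences $a\not\sim b,f$ from~(i), single out (up to relabelling) the configuration in which $\rho_{cd}=\rho_{de}$ and the shared curve is the unique one in $\pi^{-1}(\gamma_0)$ separating both boundary components; this forces $c$ and $e$ to be 2-HBPs bounding 4-holed spheres containing both $\partial_1,\partial_2$, while $b,d,f$ become 1-HBPs. Finally, $a$ is disjoint from $b$ and $f$ but must intersect $c,d,e$ and is not HBP-equivalent to $\gamma_0$; this forces $a$ to be a 1-HBP whose defining arc connects $\partial_1$ to $\partial_2$ transversally to $c, e$.

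The main obstacle is the case analysis in~(ii), particularly the subcase $\rho_{cd}=\rho_{de}$ where $d$ has an ``extra'' second curve outside the chain roots; in that subcase the 1-HBP type of $d$ must be read off from the topological role of the extra curve rather than from the chain structure directly, and the same extra-curve argument has to be repeated for $a$.
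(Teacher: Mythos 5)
Your ``structural lemma'' (at most three pairwise disjoint, pairwise non-isotopic separating curves within one HBP-equivalence class of $S_{g,2}$) is correct, and it does show, under the contradiction hypothesis $a\sim b$ in part (i), that every edge of the hexagon is rooted --- this matches the paper's first step. The proof breaks down, however, at the sentence claiming that ``an enumeration of the possible assignments \dots combined with the requirement that the six hexagon vertices be pairwise distinct HBPs, produces a contradiction in every case.'' Two problems. First, the bound ``at most three distinct values among the six roots'' does not follow from your lemma: writing $\rho_k$ for the root of $\{v_k,v_{k+1}\}$, the opposite roots $\rho_1\in v_1\cap v_2$ and $\rho_4\in v_4\cap v_5$ need not be disjoint, since every one of the pairs $(v_1,v_4)$, $(v_1,v_5)$, $(v_2,v_4)$, $(v_2,v_5)$ is an intersecting pair of the hexagon; so the lemma cannot be applied to the full set of roots. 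Second, and decisively, in the case where all six roots coincide with a single curve $\alpha$ --- which contains the paper's hardest subcase (d) --- each $v_k=\{\alpha,v_k^1\}$ and pairwise distinctness of the companion curves $v_k^1$ yields no contradiction at all: the hexagon relations become a hexagon in the disjointness graph on curves that are HBP-equivalent to and disjoint from $\alpha$, and ruling this out requires genuine topological input. The paper handles it by cutting along $\alpha$, projecting $\calc(R)\to\calc^*(\tilde R)$, and invoking Theorem 7.1 of \cite{kls} (the fiber of this projection over a curve is a simplicial tree). Nothing in your outline supplies a substitute for that step, so part (i) has a real gap at its core.

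Part (ii) has the same defect in amplified form: the argument is announced as a case enumeration but none of the cases is carried out, and the configuration you claim to single out is wrong in one respect --- by Lemma~\ref{lem-shbp-eq}~(ii), a rooted edge joining a 1-HBP to a 2-HBP has root curve that does \emph{not} separate the two components of $\partial S$, whereas you assert the common root of $\{c,d\}$ and $\{d,e\}$ separates both boundary components; also, 1-HBPs do not have ``defining arcs'' in the sense used here. Note that once (i) is available the paper's route to (ii) is much shorter than an enumeration over root-coincidence patterns: $a$, $b$, $f$ are 1-HBPs immediately from Lemma~\ref{lem-shbp-eq}~(i) since $a$ is disjoint from but not equivalent to $b$ and $f$; Lemma~\ref{lem-shbp-sq}~(iii) forces at least one of $c,d,e$ to be a 2-HBP; Lemma~\ref{lem-shbp-eq}~(ii) forbids two adjacent 2-HBPs; so one only has to exclude the possibility that exactly one of $c,d,e$ is a 2-HBP, which is three short arguments rather than a full enumeration.
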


Note that the hexagon in Figure \ref{fig-shbp-hex} satisfies the assumption in this proposition. 
Let us summarize elementary properties on separating HBPs, which will be used to prove the proposition. 
One can readily check the following two lemmas.

\begin{lem}\label{lem-shbp-eq}
Let $S=S_{g, 2}$ be a surface with $g\geq 2$. 
Let $x$ and $y$ be HBPs in $S$ with $\{ x, y\}$ an edge of $\calcp_s(S)$. 
Then the following two assertions hold:
\begin{enumerate}
\item If $x$ and $y$ are not equivalent, then $x$ and $y$ are 1-HBPs (see Figure \ref{fig-shbp-pf} (a)).
\begin{figure}
\begin{center}
\includegraphics[width=10cm]{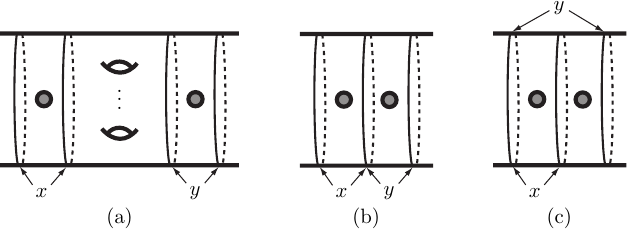}
\caption{}\label{fig-shbp-pf}
\end{center}
\end{figure}
\item If $x$ and $y$ are equivalent, then the edge $\{ x, y\}$ is rooted, and either $x$ and $y$ are 1-HBPs or one of $x$ and $y$ is a 1-HBP and another is a 2-HBP. 
The former is the case if and only if the root curve of $\{ x, y\}$ separates the two components of $\partial S$ (see Figure \ref{fig-shbp-pf} (b) and (c)).
\end{enumerate}
\end{lem}

\begin{lem}\label{lem-shbp-sq}
Let $S=S_{g, 2}$ be a surface with $g\geq 2$. The following assertions hold:
\begin{enumerate}
\item If $x$, $y$ and $z$ are 1-HBPs in $S$ such that $\{ x, y\}$ and $\{ y, z\}$ are rooted edges of $\calcp_s(S)$, then the root curves of $\{ x, y\}$ and $\{ y, z\}$ are equal.
\item If $y$ is a 1-HBP in $S$ and $x$ and $z$ are 2-HBPs in $S$ such that $\{ x, y\}$ and $\{ y, z\}$ are rooted edges of $\calcp_s(S)$, then the root curves of $\{ x, y\}$ and $\{ y, z\}$ are equal.
\item If $x$, $y$, $z$ and $w$ are mutually distinct 1-HBPs in $S$ such that $\{ x, y\}$, $\{ y, z\}$ and $\{ z, w\}$ are rooted edges of $\calcp_s(S)$, then we have $i(w, x)=0$ and the edge $\{ w, x\}$ is rooted. 
\end{enumerate}
\end{lem}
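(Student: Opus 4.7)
The plan is to use Lemma \ref{lem-shbp-eq}(ii) as the organizing principle: for each rooted edge of $\calcp_s(S)$ between HBPs, it classifies the configuration according to whether the root curve separates the two components of $\partial S$. The first step is a structural fact about any separating 1-HBP $\{y_1, y_2\}$ in $S_{g,2}$: exactly one of $y_1, y_2$ separates $\partial_1$ from $\partial_2$ in $S$. Denoting the pair of pants by $P$ with $\partial P = y_1 \cup y_2 \cup \partial_1$, the hypothesis that both curves are separating in $S$ forces $S \setminus \operatorname{int}(P)$ to be disconnected (otherwise cutting $S$ along $y_1$ alone leaves $P$ and its complement joined along $y_2$, making $y_1$ non-separating). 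A case analysis on how $y_1, y_2, \partial_2$ distribute among components of the complement---rejecting distributions that would disconnect $S$ itself---yields exactly two components, and a direct inspection of either resulting configuration shows that exactly one of $y_1, y_2$ separates $\partial_1$ from $\partial_2$.

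With this in hand, parts (i) and (ii) follow at once. In (i), Lemma \ref{lem-shbp-eq}(ii) gives that the root curves $\alpha$ of $\{x,y\}$ and $\beta$ of $\{y,z\}$ both separate $\partial_1$ from $\partial_2$; since both lie in $y$ and the structural fact singles out such a curve in $y$ uniquely, $\alpha = \beta$. For (ii), the same lemma forces $\alpha, \beta$ both \emph{not} to separate $\partial_1$ from $\partial_2$, and the structural fact singles out the unique non-boundary-separating curve in $y$, so again $\alpha = \beta$.

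For (iii), applying (i) to the consecutive triples shows that all three rooted edges share a common root $\alpha$, which separates $\partial_1$ from $\partial_2$. Let $A, B$ be the two components of $S_\alpha$ with $\partial_1 \in A$ and $\partial_2 \in B$. Every 1-HBP $u$ in $\calcp_s(S)$ containing $\alpha$ can be written $u = \{\alpha, u'\}$ with $u' \subset A$ or $u' \subset B$. The key claim is: if $u \neq v$ are two such 1-HBPs with $u', v'$ on the same side, then $u' \cap v' \neq \emptyset$. Assuming $u', v' \subset A$ are disjoint, $v'$ sits either in the pair of pants $P_u$ cut off by $u'$ in $A$ or in the complementary piece $R_u$. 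In $P_u$ the essential curves are boundary-parallel, and each of the three options ($v'$ parallel to $\alpha$, to $\partial_1$, or to $u'$) contradicts either that $v$ consists of two distinct essential curves of $S$ or that $u \neq v$; if instead $v' \subset R_u$, then $P_v$ must cross $u'$, so $u' \subset \operatorname{int}(P_v)$, and the same three-option analysis applied in $P_v$ (with $u'$ in the role of $v'$) delivers the analogous contradiction. Hence the two free curves of a rooted edge of distinct 1-HBPs with root $\alpha$ lie on opposite sides of $\alpha$; along the chain $\{x,y\}, \{y,z\}, \{z,w\}$ the sides therefore alternate, placing $x'$ and $w'$ on opposite sides of $\alpha$. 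Consequently $i(x', w') = 0$, hence $i(w, x) = 0$, and $\alpha \in w \cap x$ makes $\{w, x\}$ rooted. The most delicate step is the subsurface analysis supporting the key claim, which ultimately reduces to the classification of essential curves in a pair of pants.
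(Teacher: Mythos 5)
Your proof is correct. Note that the paper offers no argument for this lemma at all: it is introduced with ``One can readily check the following two lemmas,'' so there is no proof to compare against, and your write-up simply supplies the omitted verification. The route you take is the natural one and checks out in every detail: the structural fact that a separating $1$-HBP in $S_{g,2}$ contains exactly one curve separating the two boundary components is what makes Lemma \ref{lem-shbp-eq} (ii) pin down the root curve inside $y$ uniquely in parts (i) and (ii); and for part (iii) the observation that two distinct $1$-HBPs rooted at $\alpha$ with free curves on the same side of $\alpha$ must intersect (via the classification of essential curves in a pair of pants, applied to whichever of $P_u$, $P_v$ contains the other free curve) correctly forces the sides to alternate along the chain, giving $i(w,x)=0$ and the common root $\alpha\in w\cap x$. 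The only point worth stating explicitly, which you use implicitly, is that a rooted edge of HBPs automatically consists of equivalent HBPs (they share a curve, hence have the same image under $\pi$), so Lemma \ref{lem-shbp-eq} (ii) indeed applies to each edge in the hypotheses.
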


\begin{proof}[Proof of Proposition \ref{prop-s-hex-root} (i)]
Let $\Pi$ be the hexagon in $\calcp_s(S)$ defined by the 6-tuple $(a, b, c, d, e, f)$.
Assuming that the former assertion of assertion (i) is not true, we deduce a contradiction.
Note that in this case, $a$ is equivalent to both $b$ and $f$ because the six HBPs of $\Pi$ associate the same curve in $\bar{S}$, the closed surface obtained by attaching disks to all components of $\partial S$, under the inclusion of $S$ into $\bar{S}$.
It then follows that the edges $\{ a, b\}$ and $\{ f, a\}$ are also rooted.
We do not hence need to specify $a$, and deduce a contradiction in the following four cases: (1) $\Pi$ contains no 2-HBP; (2) $\Pi$ contains exactly one 2-HBP; (3) $\Pi$ contains exactly two 2-HBPs; and (4) $\Pi$ contains three 2-HBPs.

Cases (1) and (2) are impossible by Lemma \ref{lem-shbp-sq} (iii).
In case (3), we first assume that $a$ and $d$ are 2-HBPs.
Let $Q_1$ denote the holed sphere cut off by $a$ from $S$.
The curves in $b$ and $f$ that do not belong to $a$ intersect and thus fill $Q_1$.
They belong to $c$ and $e$, respectively, by Lemma \ref{lem-shbp-eq} (ii).
The equality $a=d$ thus has to hold.
This is a contradiction.
We next assume that $c$ and $e$ are 2-HBPs. By Lemma \ref{lem-shbp-sq} (ii), $c$, $d$ and $e$ share a curve, denoted by $\alpha$. 
Similarly, by Lemma \ref{lem-shbp-sq} (i), $b$, $a$ and $f$ share a curve, denoted by $\beta$.
By Lemma \ref{lem-shbp-eq} (ii), $\alpha$ does not separate the two components of $\partial S$, but $\beta$ does.
We define $c^1$, $d^1$ and $e^1$ as the curves in $c$, $d$ and $e$ distinct from $\alpha$, respectively.
It follows from $i(b, c)=i(d, c)=0$ that $\beta$ and $d^1$ are curves in the holed sphere, denoted by $Q_2$, cut off by $c$ from $S$.
Since $b$ consists of $\beta$ and a curve in $c$ and since we have $i(b, d)\neq 0$, the curves $\beta$ and $d^1$ intersect and fill $Q_2$.
This contradicts the existence of the curve $e^1$ satisfying $i(e^1, \beta)=i(e^1, d)=0$ and $i(e^1, c^1)\neq 0$.

In case (4), we may assume that $a$, $c$ and $e$ are 2-HBPs. 
By Lemma \ref{lem-shbp-sq} (ii), the three HBPs in each of the triplets $\{ c, d, e\}$, $\{ e, f, a\}$ and $\{ a, b, c\}$ share a curve in $S$, and we denote it by $\alpha$, $\beta$ and $\gamma$, respectively.
Note that $\alpha$, $\beta$ and $\gamma$ are mutually disjoint.
If $\alpha$, $\beta$ and $\gamma$ were mutually distinct, then we would have the equalities $a=\{ \beta, \gamma \}$, $c=\{ \gamma, \alpha \}$ and $e=\{ \alpha, \beta \}$, and these three HBPs would form a 2-simplex of $\calcp_s(S)$.
This is a contradiction.
If $\alpha =\beta \neq \gamma$, then we would have $a=c$ and obtain a contradiction.
A similar argument implies the equality $\alpha =\beta =\gamma$.
We define $a^1,\ldots, f^1$ as the curves of $a,\ldots, f$ distinct from $\alpha$, respectively.

Let $R$ denote the component of $S_{\alpha}$ containing $\partial S$, which contains all the curves $a^1,\ldots, f^1$.
We have the three 3-HBCs $a^1$, $c^1$ and $e^1$ in $R$ encircling the two components of $\partial S$ and the boundary component of $R$, denoted by $\partial_{\alpha}$, that corresponds to $\alpha$.
The other three 2-HBCs $b^1$, $d^1$ and $f^1$ in $R$ encircle one component of $\partial S$ and $\partial_{\alpha}$.
Without loss of generality, we may assume that $b^1$ and $d^1$ encircle $\partial_{\alpha}$ and the same component of $\partial S$, denoted by $\partial_1$.
Another component of $\partial S$ is denoted by $\partial_2$.
Let $\tilde{R}$ denote the surface obtained from $R$ by attaching a disk to $\partial_2$.
We have the simplicial map $\rho \colon \calc(R)\to \calc^*(\tilde{R})$ associated with the inclusion of $R$ into $\tilde{R}$, where $\calc^*(\tilde{R})$ is the simplicial cone over $\calc(\tilde{R})$ with the cone point $\ast$.
Note that $\rho^{-1}(\ast)$ consists of all 2-HBCs in $R$ encircling $\partial_2$ and another boundary component of $R$.

The two disjoint curves $a^1$ and $b^1$ in $R$ are isotopic in $\tilde{R}$ because $a^1$ encircles $\partial_1$, $\partial_2$ and $\partial_{\alpha}$, and $b^1$ encircles $\partial_1$ and $\partial_{\alpha}$.
We thus have the equality $\rho(a^1)=\rho(b^1)$.
Similarly, we have $\rho(b^1)=\rho(c^1)$.
Replacing $b^1$ by $d^1$, we obtain the equality $\rho(c^1)=\rho(d^1)=\rho(e^1)$.
We therefore have the equality $\rho(a^1)=\rho(b^1)=\rho(c^1)=\rho(d^1)=\rho(e^1)$.

If $f^1$ encircles $\partial_1$ and $\partial_{\alpha}$, then we have the equality $\rho(f^1)=\rho(a^1)$.
This contradicts \cite[Theorem 7.1]{kls} asserting that the inverse image of a curve in $\tilde{R}$ under $\rho$ is a simplicial tree.
If $f^1$ encircles $\partial_2$ and $\partial_{\alpha}$, then the equality $\rho(a^1)=\rho(e^1)$ implies the equality $a^1=e^1$ because $a^1$ and $e^1$ are disjoint from $f^1$ and lie in the component of $R_{f^1}$ that does not contain $\partial_2$.
This is also a contradiction.

We proved the former assertion of assertion (i).
If $g=2$, then any two disjoint and separating HBPs in $S$ are equivalent.
The existence of $\Pi$ therefore implies $g\geq 3$.
The latter assertion of assertion (i) follows.
\end{proof}

\begin{proof}[Proof of Proposition \ref{prop-s-hex-root} (ii)]
Assertion (i) and Lemma \ref{lem-shbp-eq} (i) imply that $a$, $b$ and $f$ are 1-HBPs.
By Lemma \ref{lem-shbp-sq} (iii), at least one of $c$, $d$ and $e$ is a 2-HBP.
Assuming that exactly one of $c$, $d$ and $e$ is a 2-HBP, we deduce a contradiction.
This completes the proof.

We first assume that $c$ is a 2-HBP.
By Lemma \ref{lem-shbp-sq} (i), $d$, $e$ and $f$ share a curve, denoted by $\alpha$.
The curve $\alpha$ separates the two components of $\partial S$.
It follows that $d^1$ and $f^1$, the curves of $d$ and $f$ distinct from $\alpha$, respectively, are contained in the same component of $S_{\alpha}$, and that the two curves of $a$ are contained in another component of $S_{\alpha}$.
The equality $i(a, d)=0$ therefore holds.
This is a contradiction.
By symmetry, we can also deduce a contradiction if we assume that $e$ is a 2-HBP.

We next assume that $d$ is a 2-HBP. 
Let $\alpha$, $\beta$, $\gamma$ and $\delta$ denote the root curves of $\{ b, c\}$, $\{ c, d\}$, $\{ d, e\}$ and $\{ e, f\}$, respectively.
By Lemma \ref{lem-shbp-eq} (ii), we have $\alpha \neq \beta$ and $\gamma \neq \delta$, and the equalities $c=\{ \alpha, \beta \}$ and $e=\{ \gamma, \delta \}$ thus hold.
Since $d$ contains $\beta$ and $\gamma$ and since we have $i(c, e)\neq 0$, the two curves $\alpha$ and $\delta$ intersect and fill the holed sphere cut off by $d$ from $S$.
This contradicts the existence of the 1-HBP $a$ disjoint from $\alpha$ and $\delta$.
\end{proof}

We now turn our attention to superinjective maps from $\calcp_s(S)$ into itself.

\begin{lem}\label{lem-psi-khbp}
Let $S=S_{g, p}$ be a surface with $g\geq 2$ and $p\geq 2$.
Then for any integer $k$ with $1\leq k\leq p$, any superinjective map from $\calcp_s(S)$ into itself preserves $k$-HBPs.
\end{lem}

We first prove this lemma when $p=2$. 

\begin{proof}[Proof of Lemma \ref{lem-psi-khbp} when $p=2$]
If $g\geq 3$, then the lemma is obtained by using the hexagon in Figure \ref{fig-shbp-hex} and Proposition \ref{prop-s-hex-root}.
If $g=2$, then pick a superinjective map $\psi \colon \calcp_s(S)\to \calcp_s(S)$ and the hexagon in $\calcp_s(S)$ drawn in Figure \ref{fig-shbp-c-hex}, which is defined by a 6-tuple $(a, c_1, b_1, c_3, b_2, c_2)$ such that
\begin{itemize}
\item $a$ is a 2-HBC; $b_1$ and $b_2$ are 1-HBPs; $c_1$, $c_2$ and $c_3$ are 2-HBPs; and
\item any of the four edges $\{ c_1, b_1\}$, $\{ b_1, c_3\}$, $\{ c_3, b_2\}$ and $\{ b_2, c_2\}$ is rooted.
\end{itemize}
By Lemma \ref{lem-hbp}, any of $\psi(b_1)$, $\psi(b_2)$, $\psi(c_1)$, $\psi(c_2)$ and $\psi(c_3)$ is an HBP.
By Lemma \ref{lem-rooted}, any of $\{ \psi(c_1), \psi(b_1)\}$, $\{ \psi(b_1), \psi(c_3)\}$, $\{ \psi(c_3), \psi(b_2)\}$ and $\{ \psi(b_2), \psi(c_2)\}$ is rooted.
By Proposition \ref{prop-s-hex-root} (i), $\psi(a)$ is not an HBP and is thus a 2-HBC.
It follows that $\psi$ preserves 2-HBCs.
Since any HBP in $S$ disjoint from a 2-HBC is a 2-HBP, the map $\psi$ also preserves 2-HBPs.
Since any HBP in $S$ disjoint and distinct from a 2-HBP is a 1-HBP, the map $\psi$ also preserves 1-HBPs.
\end{proof}

In the rest of the proof of Lemma \ref{lem-psi-khbp}, we assume $p\geq 3$ and prove it in the cases of $g=2$ and $g\geq 3$ separately.

\begin{proof}[Proof of Lemma \ref{lem-psi-khbp} when $g=2$ and $p\geq 3$]
Let $\psi \colon \calcp_s(S)\to \calcp_s(S)$ be a superinjective map.
Note that any two disjoint and separating HBPs in $S$ are equivalent by the assumption $g=2$.

We claim that $\psi$ preserves 2-HBCs.
Let $\alpha$ be a 2-HBC in $S$.
Choose a simplex $s=\{ \beta_1,\ldots, \beta_p\}$ of $\calc(S)$ disjoint from $\alpha$ and consisting of mutually HBP-equivalent and separating curves in $S$.
Let $\sigma$ denote the simplex of $\calcp_s(S)$ consisting of all HBPs of two curves in $s$. 
If $\psi(\alpha)$ were an HBP, then it would be equivalent to any HBP in $\psi(\sigma)$ by the assumption $g=2$.
We can deduce a contradiction along the proof of Lemma \ref{lem-hbp-neq-p3}.
It thus turns out that $\psi(\alpha)$ is an HBC.
The existence of $\psi(\sigma)$ implies that $\psi(\alpha)$ is a 2-HBC.
Our claim follows.

By induction on $p$, we prove that for any integer $k$ with $1\leq k\leq p$, the map $\psi$ preserves $k$-HBPs.
The case of $p=2$ is already proved.
The following argument appears in the proof of \cite[Lemma 3.15]{kida-yama}.
Pick a simplex $\tau$ of $\calcp_s(S)$ of maximal dimension.
Let $\{ \gamma_0,\ldots, \gamma_p\}$ denote the collection of curves in HBPs of $\tau$ so that $\{ \gamma_j, \gamma_{j+1}\}$ is a 1-HBP for any $j=0,\ldots, p-1$.
By Lemma \ref{lem-root-curve}, there exist curves $\delta_0,\ldots, \delta_p$ in $S$ with $\psi(\{ \gamma_j, \gamma_k\})=\{ \delta_j, \delta_k\}$ for any distinct $j, k=0, \ldots, p$.
Choose two distinct 2-HBCs $\epsilon_1$ and $\epsilon_2$ in $S$ contained in the holed sphere cut off by $\{ \gamma_0, \gamma_2\}$ from $S$ (see Figure \ref{fig-ep-pf} (a)).
\begin{figure}
\begin{center}
\includegraphics[width=12cm]{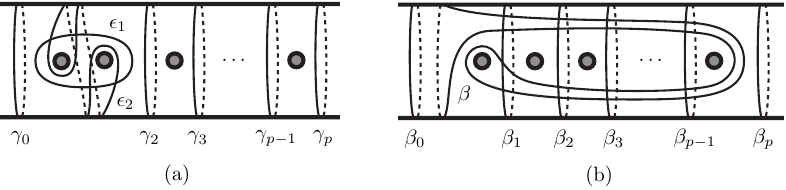}
\caption{}\label{fig-ep-pf}
\end{center}
\end{figure}
The claim in the last paragraph implies that $\psi(\epsilon_1)$ and $\psi(\epsilon_2)$ are 2-HBCs.
Let $Q$ denote the component of $S_{\epsilon_1}$ of positive genus, and let $R$ denote the component of $S_{\psi(\epsilon_1)}$ of positive genus.
We now apply the hypothesis of the induction to the superinjective map from $\calcp_s(Q)$ into $\calcp_s(R)$ induced by $\psi$.
It then follows that each of $\{ \delta_0, \delta_2\}$ and $\{ \delta_j, \delta_{j+1}\}$ for any $j=2,\ldots, p-1$ is a 1-HBP in $R$, and that $\{ \delta_0, \delta_p\}$ is a $(p-1)$-HBP in $R$.

We have two possibilities: Either $\{ \delta_0, \delta_2\}$ is a 2-HBP in $S$ or for some $j=2,\ldots, p-1$, the pair $\{ \delta_j, \delta_{j+1}\}$ is a 2-HBP in $S$.
Assuming that the latter holds, we deduce a contradiction.
We choose a curve $\gamma$ in $S$ such that $\{ \gamma_0, \gamma \}$ is an HBP in $S$; and $\gamma$ intersects $\gamma_{j+1}$ and is disjoint from $\gamma_k$ for any $k\in \{ 0,\ldots, p\} \setminus \{ j+1\}$.
Note that $\{ \gamma_0, \gamma \}$ is disjoint from $\epsilon_1$ and $\epsilon_2$.
On the other hand, $\psi(\epsilon_1)$ and $\psi(\epsilon_2)$ fill the holed sphere cut off by the 2-HBP $\{ \delta_j, \delta_{j+1}\}$ in $S$.
Since the 2-simplex of $\calcp_s(S)$ consisting of $\{ \gamma_0, \gamma \}$, $\{ \gamma_0, \gamma_1\}$ and $\{ \gamma_0, \gamma_2\}$ is rooted, the HBP $\psi(\{ \gamma_0, \gamma\})$ contains $\delta_0$.
Another curve of $\psi(\{ \gamma_0, \gamma \})$ intersects $\delta_{j+1}$, and thus intersects $\psi(\epsilon_1)$ or $\psi(\epsilon_2)$.
This is a contradiction.

We have shown that $\{ \delta_0, \delta_2\}$ is a 2-HBP in $S$, and that for any $j=2,\ldots, p-1$, $\{ \delta_j, \delta_{j+1}\}$ is a 1-HBP in $S$.
It follows that $\{ \delta_0, \delta_1\}$ and $\{ \delta_1, \delta_2\}$ are 1-HBPs in $S$, and that $\psi$ preserves $k$-HBPs for any $k$.
\end{proof}

\begin{proof}[Proof of Lemma \ref{lem-psi-khbp} when $g\geq 3$ and $p\geq 3$]
Let $\psi \colon \calcp_s(S)\to \calcp_s(S)$ be a superinjective map.
Pick a simplex $\sigma$ of $\calcp_s(S)$ of maximal dimension.
Let $s=\{ \beta_0,\ldots, \beta_p\}$ be the collection of curves in HBPs of $\sigma$ with $\{ \beta_j, \beta_{j+1}\}$ a 1-HBP for any $j=0,\ldots, p-1$.
Since we have $g\geq 3$, we may assume that the component of $S_{\sigma}$ of positive genus containing $\beta_p$ as a boundary component is of genus at least 2. 
This component of $S_{\sigma}$ is denoted by $Q$.
By Lemma \ref{lem-root-curve}, there exist curves $\gamma_0,\ldots, \gamma_p$ in $S$ with $\psi(\{ \beta_j, \beta_k\})=\{ \gamma_j, \gamma_k\}$ for any distinct $j, k=0,\ldots, p$.
Since the genus of $Q$ is at least 2, there exists a simplex $\tau$ of $\calcp_s(S)$ consisting of $p(p-1)/2$ HBPs which are mutually equivalent and any of which is disjoint from $\{ \beta_0, \beta_1\}$, but not equivalent to it.
By Lemma \ref{lem-hbp-neq-p3} (ii), the HBP $\{ \gamma_0, \gamma_1\}$ and any HBP in $\psi(\tau)$ are not equivalent.
We thus see that $\{ \gamma_0, \gamma_1\}$ is a 1-HBP and that there is a unique component of $S_{\{ \gamma_0, \gamma_1\}}$ of positive genus which has exactly one boundary component.
This component of $S_{\{ \gamma_0, \gamma_1\}}$ is denoted by $R$.

Choose a curve $\beta$ in $S$ such that $\{ \beta_0, \beta \}$ is an HBP and we have $i(\beta, \beta_p)=0$ and $i(\beta, \beta_j)\neq 0$ for any $j=1,\ldots, p-1$ (see Figure \ref{fig-ep-pf} (b)). 
Since $\{ \beta_0, \beta\}$ and $\{ \beta_0, \beta_p\}$ form a rooted edge of $\calcp_s(S)$, Lemma \ref{lem-rooted} implies that $\psi(\{ \beta_0, \beta \})$ contains $\gamma_0$ or $\gamma_p$.
Another curve, denoted by $\gamma$, of $\psi(\{ \beta_0, \beta \})$ intersects $\gamma_j$ for any $j=1,\ldots, p-1$.
It then follows that $R$ contains $\gamma_0$ as a boundary component because otherwise $R$ would contain $\gamma_1$ as a boundary component, and $\gamma$ could not intersect $\gamma_1$ and $\gamma_2$ simultaneously, kept disjoint from $\gamma_0$.

We have shown that $\{ \gamma_0, \gamma_1\}$ is a 1-HBP in $S$, and that $\partial S$ is contained in a component of $S_{\gamma_0}$.
Pick a simplex $\rho$ of $\calcp_s(S)$ consisting of $(p-1)(p-2)/2$ HBPs which are mutually equivalent and any of which is disjoint from $\{ \beta_0, \beta_2\}$, but not equivalent to it.
It follows that $\{ \gamma_0, \gamma_2\}$ is a 2-HBP in $S$ because it is disjoint from $\psi(\rho)$ and is equivalent to no HBP in $\psi(\rho)$ by Lemma \ref{lem-hbp-neq-p3} (ii).
Repeating this argument, we see that for any $j=1,\ldots, p$, the pair $\{ \gamma_0, \gamma_j\}$ is a $j$-HBP in $S$.
It follows that for any $j, k=0,\ldots, p$ with $j<k$, the pair $\{ \gamma_j, \gamma_k\}$ is a $(k-j)$-HBP in $S$.
\end{proof}

\begin{lem}\label{lem-psi-jhbc}
Let $S=S_{g, p}$ be a surface with $g\geq 2$ and $p\geq 2$.
Then for any integer $j$ with $2\leq j\leq p$, any superinjective map from $\calcp_s(S)$ into itself preserves $j$-HBCs.
\end{lem}

\begin{proof}[Proof of Lemma \ref{lem-psi-jhbc} when $p=2$]
Let $\psi \colon \calcp_s(S)\to \calcp_s(S)$ be a superinjective map. 
It follows from Lemma \ref{lem-psi-khbp} that the image of the hexagon $\Pi$ in Figure \ref{fig-shbp-c-hex} under $\psi$ contains three 2-HBPs and four rooted edges.
Proposition \ref{prop-s-hex-root} implies that the image of the 2-HBC in $\Pi$ has to be a 2-HBC.
This proves that $\psi$ preserves 2-HBCs.
\end{proof}

The proof of Lemma \ref{lem-psi-jhbc} when $p\geq 3$ is a verbatim translation of that of Lemma \ref{lem-phi-jhbc} when $p\geq 3$.
In place of Lemma \ref{lem-n-well-p3}, we use the following lemma, which can be verified by using \cite[Lemmas 4.10 and 5.3]{kida-yama}.

\begin{lem}
Let $S=S_{g, p}$ be a surface with $g\geq 2$ and $p\geq 3$, and let $\psi \colon \calcp_s(S)\to \calcp_s(S)$ be a superinjective map.
For any separating curve $\alpha$ in $S$ that is not an HBC in $S$, there exists a unique separating curve $\beta$ in $S$ that is not an HBC in $S$, such that for any HBP $a$ in $S$ containing $\alpha$, the HBP $\psi(a)$ contains $\beta$.
\end{lem}


\section{Hexagons in $\calcp_n(S_{g, 2})$}\label{sec-bn}

In this section, we focus on the case of $p=2$, and discuss topological types of hexagons in $\calcp_n(S_{g, 2})$.
We then show that any superinjective map $\phi$ from $\calcp_n(S_{g, 2})$ into itself sends the link of any 2-HBP $a$ onto the link of $\phi(a)$.
Results in this section will also be used in subsequent sections.
Throughout this section, we put $S=S_{g, 2}$ with $g\geq 2$, and mean by an HBP a non-separating one unless otherwise stated, because we mainly deal with $\calcp_n(S)$.

\begin{prop}\label{prop-n-hex}
Let $(a, c_1, b_1, c_3, b_2, c_2)$ be a 6-tuple defining a hexagon in $\calcp_n(S)$ such that
\begin{itemize}
\item $a$ is a 2-HBC; $b_1$ and $b_2$ are 1-HBPs; $c_1$, $c_2$ and $c_3$ are 2-HBPs; and
\item any of the four edges $\{ c_1, b_1\}$, $\{ b_1, c_3\}$, $\{ c_3, b_2\}$ and $\{ b_2, c_2\}$ is rooted.
\end{itemize}
Then the following two assertions hold:
\begin{enumerate}
\item The root curves of the four edges in the second condition are equal.
\item We have $i(b_1, b_2)=i(b_2, a)=i(a, b_1)=2$.
\end{enumerate}
\end{prop}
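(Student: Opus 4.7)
The plan is to prove assertion (i) first, after which assertion (ii) follows by a direct geometric count in the configuration that (i) forces. Denote by $\alpha_1, \alpha_2, \alpha_3, \alpha_4$ the root curves of the four rooted edges $\{c_1, b_1\}$, $\{b_1, c_3\}$, $\{c_3, b_2\}$, $\{b_2, c_2\}$, respectively, so that assertion (i) amounts to the equality $\alpha_1 = \alpha_2 = \alpha_3 = \alpha_4$. The key structural fact I would exploit is that the 2-HBC $a$ cuts $S$ into a pair of pants $P$ containing both boundary components of $S$ and a one-holed genus-$g$ surface $Q$. Since $P$ admits no essential simple closed curves, any essential curve of $S$ disjoint from $a$ lies in $Q$; in particular every curve of $c_1$ and of $c_2$ lies in $Q$, so $\alpha_1, \alpha_4 \subset Q$.

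For assertion (i) I would first prove $\alpha_1 = \alpha_2$ (and by symmetry $\alpha_3 = \alpha_4$) by contradiction. If $\alpha_1 \neq \alpha_2$, then $b_1 = \{\alpha_1, \alpha_2\}$ as a pair of curves. The distance-two relation in the hexagon forces $i(b_1, a) \neq 0$, and since $\alpha_1 \subset Q$ is disjoint from $a$, the curve $\alpha_2$ must cross $a$. Letting $P_0$ denote the pair of pants cut off from $S$ by $b_1$, whose boundary consists of $\alpha_1$, $\alpha_2$, and one $\partial_i \subset P$, I would examine the arcs of $a \cap P_0$: each such arc has both endpoints on $\alpha_2$ and separates $\alpha_1$ from $\partial_i$ inside $P_0$. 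Combining this with the 2-HBP structure of $c_1 = \{\alpha_1, c_1^1\}$ (whose curves all lie in $Q$) and of $c_3 = \{\alpha_2, c_3^1\}$, together with the constraints that $c_3$ is disjoint from $b_1$ and $c_1$ but must satisfy $i(c_1, c_3) \neq 0$, I would extract a topological contradiction. Once $\alpha_1 = \alpha_2$ and $\alpha_3 = \alpha_4$ are in hand, the equality $\alpha_2 = \alpha_3$ follows analogously: if the two common curves were distinct, then $c_3 = \{\alpha_2, \alpha_3\}$, and comparison with the 1-HBPs $b_1 \ni \alpha_2$, $b_2 \ni \alpha_3$ (which share at most one curve yet must satisfy $i(b_1, b_2) \neq 0$ while both being disjoint from $c_3$) would violate the hexagon's intersection pattern.

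With (i) established, write $\alpha$ for the common root curve and let $b_j^1, c_3^1$ be the partners of $\alpha$ in $b_j, c_3$. Because $\alpha$ is disjoint from $a$ and from $c_3^1$, the intersection numbers in assertion (ii) collapse to $i(a, b_1^1)$, $i(a, b_2^1)$, and $i(b_1^1, b_2^1)$. Each $b_j$ is a 1-HBP whose pair of pants has boundary $\alpha \cup b_j^1 \cup \partial_{i(j)}$, and since $\alpha \subset Q$ while $\partial_{i(j)} \subset P$, the curve $b_j^1$ must traverse $a$: a direct count of the essential arcs of $b_j^1 \cap P$ and $b_j^1 \cap Q$, aided by the hexagon's disjointness relations which rule out superfluous intersections, yields $i(a, b_j^1) = 2$. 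The same arc-counting strategy inside $Q$, using that $b_1^1$ and $b_2^1$ are each disjoint from $c_3^1$ but not from each other, gives $i(b_1^1, b_2^1) = 2$. The main obstacle throughout is the case analysis underlying the equality $\alpha_1 = \alpha_2$: converting the combinatorial hexagon data into a definite topological contradiction requires careful bookkeeping of how the 2-HBP $c_3$, which is permitted to cross $a$, interacts with the neighbouring vertices $b_1$ and $c_1$ whose positions are tightly constrained relative to $a$.
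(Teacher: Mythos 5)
Your plan for assertion (i) breaks down at its first and most important step. You propose to prove $\alpha_1=\alpha_2$ (and, by symmetry, $\alpha_3=\alpha_4$) using only the data attached to the four consecutive vertices $a$, $c_1$, $b_1$, $c_3$: the rootedness of $\{c_1,b_1\}$ and $\{b_1,c_3\}$, the disjointness relations $i(a,c_1)=i(c_1,b_1)=i(b_1,c_3)=0$, and the intersection relations $i(a,b_1)\neq 0$, $i(c_1,c_3)\neq 0$, $i(a,c_3)\neq 0$. This local data does not force $\alpha_1=\alpha_2$. Concretely, in $S=S_{g,2}$ take three disjoint curves $\alpha_1,\alpha_2,c_1^1$, pairwise isotopic in $\bar{S}$ and arranged in linear order with $\partial_1$ in the gap between $\alpha_1$ and $\alpha_2$ and $\partial_2$ in the gap between $\alpha_2$ and $c_1^1$; put $c_1=\{\alpha_1,c_1^1\}$ (a 2-HBP) and $b_1=\{\alpha_1,\alpha_2\}$ (a 1-HBP); let $a$ be the 2-HBC defined by an arc from $\partial_1$ to $\partial_2$ crossing $\alpha_2$ once and missing $\alpha_1\cup c_1^1$; and let $c_3=\{\alpha_2,c_3^1\}$, where $c_3^1$ is obtained by band-summing $\alpha_1$ to $\partial_2$ along an arc avoiding $\alpha_2$ (such an arc necessarily crosses $c_1^1$, which is what makes $i(c_1,c_3)\neq 0$). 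One checks that $\{\alpha_2,c_3^1\}$ is a 2-HBP and that all six conditions above hold, yet $\alpha_1\neq\alpha_2$. The paper's proof reflects this necessity: every contradiction it derives uses the far side of the hexagon; for example the configuration $\alpha_1\neq\alpha_2=\alpha_3=\alpha_4$ is excluded only because then $b_1$ would have one curve in $c_1$ and the other in $c_2$, so $a$, being disjoint from both $c_1$ and $c_2$, would be disjoint from $b_1$. So your step cannot be completed without bringing $b_2$ and $c_2$ into the argument, which undoes the claimed symmetry and the whole structure of your case analysis. (Note also the slip that $c_3$ is \emph{not} disjoint from $c_1$ in a hexagon.)

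Two further points. In your last step of (i), the contradiction you point to does not materialize: if $c_3=\{\alpha_2,\alpha_3\}$ with $\alpha_2\in b_1\cap c_1$ and $\alpha_3\in b_2\cap c_2$, the complementary curves of $b_1$ and $b_2$ lie in the four-holed sphere cut off by $c_3$, and two non-isotopic essential curves there always intersect, so $i(b_1,b_2)\neq 0$ is not violated. The correct (and simpler) contradiction is that $a$ is disjoint from $\alpha_2\in c_1$ and $\alpha_3\in c_2$, hence from $c_3$, contradicting $i(a,c_3)\neq 0$. For assertion (ii), the sentence ``a direct count of the essential arcs \dots\ yields $i(a,b_j^1)=2$'' is precisely where the work lies: the lower bound $2$ is automatic, but the upper bound requires showing that the relevant pairs of arcs can be isotoped to be disjoint, which the paper achieves by producing, for each would-be excess intersection, a vertex of the hexagon that would then have nowhere to live ($a$ itself for $i(b_1,b_2)$, and $b_1^1$ viewed in $S_{\alpha}$ for $i(b_2,a)$). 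That mechanism is absent from your sketch, so (ii) is asserted rather than proved.
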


The hexagon in Figure \ref{fig-nhbp-c-hex} satisfies the assumption in this proposition.
\begin{figure}
\begin{center}
\includegraphics[width=12cm]{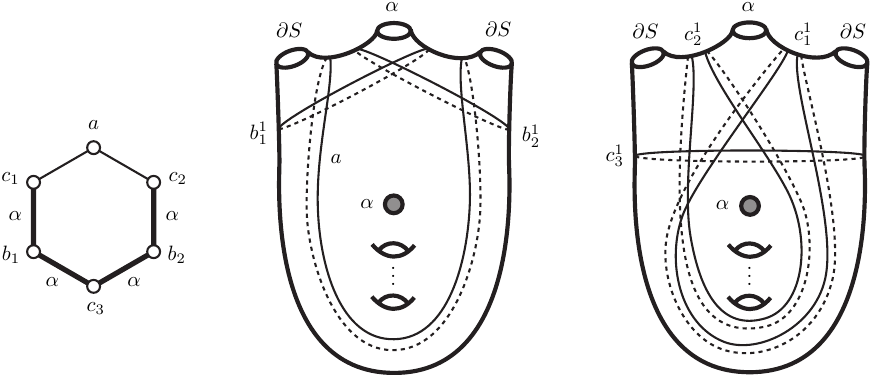}
\caption{A hexagon in $\calcp_n(S_{g, 2})$ with $b_j=\{ \alpha, b_j^1\}$ and $c_k=\{ \alpha, c_k^1\}$ for $j=1, 2$ and $k=1, 2, 3$.}\label{fig-nhbp-c-hex}
\end{center}
\end{figure}

\begin{proof}[Proof of Proposition \ref{prop-n-hex} (i)]
Let $\alpha$, $\beta$, $\gamma$ and $\delta$ denote the root curves of the edges $\{ c_1, b_1\}$, $\{ b_1, c_3\}$, $\{ c_3, b_2\}$ and $\{ b_2, c_2\}$, respectively (see Figure \ref{fig-nhex-pf} (a)).
\begin{figure}
\begin{center}
\includegraphics[width=12cm]{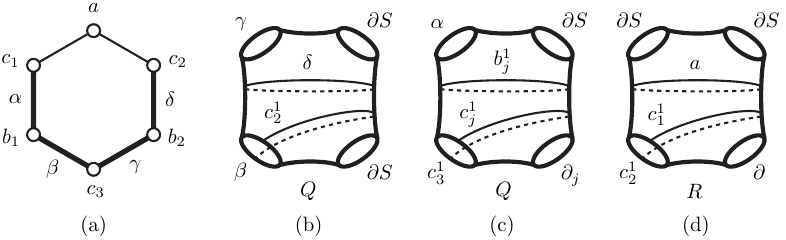}
\caption{}\label{fig-nhex-pf}
\end{center}
\end{figure}
We first assume $\beta \neq \gamma$, and deduce a contradiction.
We have $c_3=\{ \beta, \gamma \}$, and denote by $Q$ the holed sphere cut off by $c_3$ from $S$.
We define a curve $c_2^1$ so that $c_2=\{ \delta, c_2^1\}$.
We assume that any two of these curves intersect minimally unless they are isotopic.

Suppose that the equality $\alpha =\beta$ holds. 
If we had $\gamma =\delta$, then we would have $i(a, c_3)=0$ and a contradiction.
We thus have $\gamma \neq \delta$ and $b_2=\{ \gamma, \delta \}$.
Note that $\delta$ is a curve in $Q$.
Since $c_2^1$ intersects $\beta$ and is disjoint from $\delta$ and $\gamma$, the intersection $c_2^1\cap Q$ consists of mutually isotopic, essential simple arcs in $Q$ each of which cuts off an annulus containing exactly one component of $\partial S$ (see Figure \ref{fig-nhex-pf} (b)).
It is then impossible to find a position of $a$ because $a$ is a 2-HBC disjoint from $\beta =\alpha$ and $c_2$.

We thus proved $\alpha \neq \beta$.
Similarly, we have $\gamma \neq \delta$ and the equalities $b_1=\{ \alpha, \beta \}$, $c_3=\{ \beta, \gamma \}$ and $b_2=\{ \gamma, \delta \}$.
It turns out that $\alpha$ and $\delta$ intersect in $Q$ and thus fill $Q$.
Since $a$ is disjoint from $\alpha$ and $\delta$, it is impossible that $a$ is a 2-HBC.

We have shown the equality $\beta =\gamma$. 
In what follows, we deduce a contradiction in the following three cases: (1) $\alpha$, $\beta$ and $\delta$ are mutually distinct; (2) either $\alpha =\beta \neq \delta$ or $\alpha \neq \beta =\delta$; and (3) $\alpha =\delta \neq \beta$.

In case (1), we have $b_1=\{ \alpha, \beta \}$ and $b_2=\{ \beta, \delta \}$.
It then follows that $\alpha$ and $\delta$ fill the holed sphere cut off by $c_3$ from $S$.
Since $a$ is disjoint from $\alpha$ and $\delta$, it is impossible that $a$ is a 2-HBC. 
In case (2), we have either $b_2=\{ \alpha, \delta \}$ or $b_1=\{ \alpha, \delta \}$. 
Each of the two equalities contradicts the condition that $a$ is disjoint from $\alpha$ and $\delta$, but intersects $b_1$ and $b_2$. 
In case (3), we have the equality $b_1=b_2=\{ \alpha, \beta \}$ and a contradiction.
\end{proof}

\begin{proof}[Proof of Proposition \ref{prop-n-hex} (ii)]
Let $\alpha$ denote the root curve of the edges $\{ c_1, b_1\}$, $\{ b_1, c_3\}$, $\{ c_3, b_2\}$ and $\{ b_2, c_2\}$.
We define curves $b_j^1$ and $c_k^1$ so that $b_j=\{ \alpha, b_j^1\}$ and $c_k=\{ \alpha, c_k^1\}$ for $j=1, 2$ and $k=1, 2, 3$.
We assume that any two of these curves intersect minimally.

Let $Q$ denote the holed sphere cut off by $c_3$ from $S$.
Note that $b_1^1$ and $b_2^1$ are curves in $Q$.
For any $j=1, 2$, using $b_j^1$, we see that the intersection $c_j^1\cap Q$ consists of mutually isotopic, essential simple arcs in $Q$ each of which cuts off an annulus containing exactly one component of $\partial S$, denoted by $\partial_j$, and joins two points of $c_3^1$ (see Figure \ref{fig-nhex-pf} (c)).
Let $l_j$ denote a component of $c_j^1\cap Q$.
It follows from $b_1^1\neq b_2^1$ that $l_1$ and $l_2$ are not isotopic.
If $l_1$ and $l_2$ could not be isotoped so that they are disjoint, then the union of a subarc of $l_1$ and a subarc of $l_2$ would be a simple closed curve in $Q$ isotopic to $\partial_1$.
This is a contradiction because $a$ is a 2-HBC in $S$ disjoint from $c_1$ and $c_2$.
It thus turns out that $l_1$ and $l_2$ are non-isotopic and can be isotoped so that they are disjoint.
The equality $i(b_1, b_2)=i(b_1^1, b_2^1)=2$ now follows.

Replacing the 6-tuple $(a, c_1, b_1, c_3, b_2, c_2)$ with the 6-tuple $(b_1, c_3, b_2, c_2, a, c_1)$, we show $i(b_2, a)=2$.
Let $R$ denote the holed sphere cut off by $c_2^1$ from $S_{\alpha}$.
Let $\partial$ denote the boundary component of $S_{\alpha}$ that corresponds to $\alpha$ and is contained in $R$.
The holed sphere cut off by $b_2^1$ from $S_{\alpha}$ contains $\partial$ because $b_2^1$ is disjoint from $c_2^1$.
Repeating this argument, we see that the holed sphere cut off by each of $c_3^1$, $b_1^1$ and $c_1^1$ from $S_{\alpha}$ contains $\partial$.
Since $a$ is a curve in $R$ disjoint from $c_1^1$, the intersection $c_1^1\cap R$ consists of mutually isotopic, essential simple arcs in $R$ each of which cuts off an annulus containing $\partial$ and joins two points of $c_2^1$ (see Figure \ref{fig-nhex-pf} (d)).
Since $b_2^1$ is a curve in $R$ disjoint from $c_3^1$, the intersection $c_3^1\cap R$ also consists of mutually isotopic, essential simple arcs in $R$.
Let $r_1$ and $r_3$ be components of $c_1^1\cap R$ and $c_3^1\cap R$, respectively.
It follows from $a\neq b_2^1$ that $r_1$ and $r_3$ are not isotopic.
If $r_1$ and $r_3$ could not be isotoped so that they are disjoint, then the union of a subarc of $r_1$ and a subarc of $r_3$ would be a simple closed curve in $R$ isotopic to $\partial$.
This is a contradiction because $b_1^1$ is a 2-HBC in $S_{\alpha}$ which is disjoint from $c_1^1$ and $c_3^1$ and cuts off a pair of pants containing $\partial$ from $S_{\alpha}$.
It follows that $r_1$ and $r_3$ are non-isotopic and can be isotoped so that they are disjoint.
The equality $i(b_2, a)=i(b_2^1, a)=2$ is obtained.

By symmetry, we obtain the equality $i(a, b_1)=2$.
\end{proof}

As an application of the last proposition, surjectivity of a superinjective map on the link of a 2-HBP is verified.

\begin{lem}\label{lem-n2hbp-lk}
Let $\phi \colon \calcp_n(S)\to \calcp_n(S)$ be a superinjective map.
Then for any 2-HBP $b$ in $S$, we have the equality
\[\phi(\lk(b))=\lk(\phi(b)),\]
where for a vertex $c$ of $\calcp_n(S)$, $\lk(c)$ denotes the link of $c$ in $\calcp_n(S)$.
\end{lem}

Before proving this lemma, we recall the simplicial graph associated to $S_{0, 4}$.

\medskip

\noindent {\bf Graph $\cal{F}(X)$.} For a surface $X$ homeomorphic to $S_{0, 4}$, we define a simplicial graph $\cal{F}(X)$ so that the set of vertices of $\cal{F}(X)$ is $V(X)$ and two elements $\alpha, \beta \in V(X)$ are connected by an edge of $\cal{F}(X)$ if and only if we have $i(\alpha, \beta)=2$.

\medskip

This graph is known to be isomorphic to the Farey graph (see \cite[Section 3.2]{luo}).
It is shown that any injective simplicial map from the Farey graph into itself is surjective.
To prove Lemma \ref{lem-n2hbp-lk}, we also need the following:

\begin{lem}\label{lem-n-well}
Let $\phi \colon \calcp_n(S)\to \calcp_n(S)$ be a superinjective map. 
Suppose that for $k=1,2,3,4$, we have a non-separating HBP $a_k=\{\alpha, \alpha_k\}$ in $S$ such that $\{ a_1, a_2\}$ and $\{ a_3, a_4\}$ are edges of $\calcp_n(S)$. 
Then the root curves of the two edges $\{ \phi(a_1), \phi(a_2)\}$ and $\{ \phi(a_3), \phi(a_4)\}$ of $\calcp_n(S)$ are equal. 
\end{lem}

\begin{proof}
Since we have already shown that $\phi$ preserves 2-HBCs, 1-HBPs and 2-HBPs, respectively, the proof of \cite[Lemma 4.7]{kida-yama} is now valid for the present setting.
\end{proof}

\begin{proof}[Proof of Lemma \ref{lem-n2hbp-lk}]
Let $b$ be a 2-HBP in $S$.
Define curves $\beta_1$, $\beta_2$, $\gamma_1$ and $\gamma_2$ so that the equalities $b=\{ \beta_1, \beta_2\}$ and $\phi(b)=\{ \gamma_1, \gamma_2\}$ hold. 
By Lemma \ref{lem-n-well}, we may assume that for any $j=1, 2$ and any rooted edge $e$ of $\calcp_n(S)$ whose root curve is equal to $\beta_j$, the root curve of $\phi(e)$ is equal to $\gamma_j$. 
By Lemmas \ref{lem-phi-nskhbp} and \ref{lem-phi-jhbc}, $\phi$ induces a map from the union of the set of HBCs in $\lk(b)$ and the set of HBPs in $\lk(b)$ containing $\beta_1$ into the union of the set of HBCs in $\lk(\phi(b))$ and the set of HBPs in $\lk(\phi(b))$ containing $\gamma_1$.
Applying Proposition \ref{prop-n-hex} (ii), we conclude that this map induces an injective simplicial map between the Farey graphs associated to the holed spheres cut off by $b$ and by $\phi(b)$ from $S$.
Since such a simplicial map is surjective, the set $\phi(\lk(b))$ contains all HBCs in $\lk(\phi(b))$ and all HBPs in $\lk(\phi(b))$ containing $\gamma_1$.
In a similar way, we can show that $\phi(\lk(b))$ contains all HBPs in $\lk(\phi(b))$ containing $\gamma_2$.
\end{proof}

Let us denote by $\mathscr{H}_n=\mathscr{H}_n(S)$ the set of all hexagons in $\calcp_n(S)$ satisfying the assumption in Proposition \ref{prop-n-hex}.
The following lemma motivates us to relate the link of a 1-HBP $b$ in $\calcp_n(S)$ with the complex of arcs for the component of $S_b$ of positive genus.
The latter complex will be examined in Section \ref{sec-d}.

\begin{lem}\label{lem-n-hex-arc}
Let $b_1$ be a 1-HBP in $S$ and pick a curve $\alpha$ in $b_1$.
We denote by $X$ the component of $S_{b_1}$ of positive genus.
Let $c_1=\{ \alpha, c_1^1\}$ and $c_3=\{ \alpha, c_3^1\}$ be 2-HBPs in $S$ with $c_1\neq c_3$ and $i(c_1, b_1)=i(c_3, b_1)=0$.
Then there exists a hexagon in $\mathscr{H}_n$ containing $b_1$, $c_1$ and $c_3$ if and only if the defining arcs of $c_1^1$ and $c_3^1$ as curves in $X$ can be disjoint.
\end{lem}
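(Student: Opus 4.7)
The plan is to translate the combinatorial condition on the existence of a hexagon into a geometric statement about disjoint arcs in $X$. First I would clarify the setup: since $c_j=\{\alpha,c_j^1\}$ is a 2-HBP in $S$ with $i(c_j,b_1)=0$, the curve $c_j^1$ is essential and disjoint from both curves of $b_1$, so $c_j^1\subset X$. Indeed, an essential curve in the other component $Y$ of $S_{b_1}$ (a pair of pants with boundary $\partial_0,\alpha,\alpha'$, where $\alpha'$ is the curve of $b_1$ distinct from $\alpha$) must be isotopic in $Y$ to one of its boundary components, each of which contradicts either essentiality in $S$ or the fact that $c_j$ is a 2-HBP distinct from $b_1$. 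Writing $\partial_1$ for the component of $\partial S$ contained in $X$, the four-holed-sphere component of $S_{c_j}$ decomposes as $Y$ glued along $\alpha'$ to a pair of pants in $X$ with boundary $\alpha',\partial_1,c_j^1$, exhibiting $c_j^1$ as a 2-HBC in $X$ encircling $\{\alpha',\partial_1\}$, and the defining arc $l_j$ of $c_j^1$ is then the unique isotopy class of essential arc in $X$ from $\alpha'$ to $\partial_1$ sitting in the pair of pants cut off by $c_j^1$.

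For the direction ($\Leftarrow$), given disjoint representatives of $l_1,l_3$ in $X$, I would construct the remaining vertices $c_2,b_2,a$ of the hexagon. Since $X$ has genus $g-1\geq 1$, I can choose a third essential arc $l_2$ from $\alpha'$ to $\partial_1$ in $X$ disjoint from $l_1\cup l_3$, yielding a 2-HBC $c_2^1$ in $X$ and the 2-HBP $c_2=\{\alpha,c_2^1\}$. Then I would build $b_2=\{\alpha,b_2^1\}$ and the 2-HBC $a$ explicitly, taking $b_2^1$ and $a$ to be simple closed curves in $S$ that each cross $\alpha'$ exactly twice and are disjoint from $\alpha$, by concatenating arcs in $X$ (positioned relative to the disjoint $l_j$) with arcs in $Y$ running around $\partial_0$. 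Direct verification of the intersection and disjointness relations predicted by Proposition~\ref{prop-n-hex} then shows that the resulting 6-tuple $(a,c_1,b_1,c_3,b_2,c_2)$ defines a hexagon in $\mathscr{H}_n$.

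For the direction ($\Rightarrow$), assuming a hexagon exists, I would invoke the normal form from the proof of Proposition~\ref{prop-n-hex}(ii): in the four-holed sphere $Q$ cut off by $c_3$ from $S$, the intersection $c_1^1\cap Q$ consists of mutually isotopic essential arcs joining two points of $c_3^1$ that cut off an annulus around one component of $\partial S$. Restricting to the $X$-part of $Q$, namely the pair of pants $P_3\subset X$ with boundary $\alpha',\partial_1,c_3^1$, identifies $l_3$ as the unique essential arc from $\alpha'$ to $\partial_1$ in $P_3$; a defining arc $l_1$ of $c_1^1$ lives inside the pair of pants $P_1\subset X$ cut off by $c_1^1$. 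Using the parallel family $c_1^1\cap P_3$ and the position of $c_1^1$ relative to $c_3^1$, I would isotope $l_1$ to route along an outermost component of $c_1^1\cap P_3$ lying away from $l_3$, producing a representative disjoint from $l_3$. The main obstacle is this last isotopy: because $c_1^1$ and $c_3^1$ can intersect heavily in $X$, one must carefully exploit the parallel family structure provided by Proposition~\ref{prop-n-hex}(ii) to confirm that the produced arcs are genuinely in the isotopy classes of the defining arcs while being disjoint; the backward direction is by contrast a hands-on construction whose main subtlety is verifying the correct topological types of the constructed curves.
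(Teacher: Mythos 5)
Your setup is correct and matches the paper: each $c_j^1$ lies in $X$, is a 2-HBC in $X$ encircling $\partial_1$ and the copy of $\alpha'=b_1^1$, and its defining arc joins those two boundary components of $X$. Both directions of your argument, however, contain genuine errors. In the backward direction your choice of $c_2$ is fatal: you take $c_2^1$ to be the 2-HBC in $X$ defined by a third arc in $X$ from $\alpha'$ to $\partial_1$ disjoint from $l_1\cup l_3$, but any such curve is contained in $X$ and hence disjoint from $b_1^1$, so $i(c_2,b_1)=0$. Since $b_1$ and $c_2$ are opposite vertices of the $6$-tuple $(a,c_1,b_1,c_3,b_2,c_2)$, the definition of a hexagon requires $i(b_1,c_2)\neq 0$, so no hexagon can contain your $c_2$. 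The paper instead builds $a$ and $b_2$ first, extending the defining arc of $c_1^1$ to an arc $l_a$ in $S_\alpha$ joining the two components of $\partial S$ (which defines the 2-HBC $a$) and the defining arc of $c_3^1$ to an arc joining $\partial_1$ to a copy of $\alpha$ (which defines $b_2^1$), keeping the two extensions disjoint; only then is $c_2^1$ taken to be the unique curve disjoint from $a$ and $b_2^1$ with $\{\alpha,c_2^1\}$ a 2-HBP, and that curve necessarily crosses $b_1^1$, as the paper explicitly verifies.

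In the forward direction the data you propose to use cannot suffice. The parallel-family description of $c_1^1\cap Q$ ($Q$ the holed sphere cut off by $c_3$) is obtained in the proof of Proposition~\ref{prop-n-hex} (ii) using only that $c_1^1$ is disjoint from $\alpha$ and $b_1^1$ and meets $\partial Q$ only in $c_3^1$; these facts hold for \emph{every} pair $c_1,c_3$ satisfying the hypotheses of the lemma, in particular for pairs whose defining arcs intersect essentially and cannot be made disjoint (take any two arcs $r_1,r_3$ from $\alpha'$ to $\partial_1$ with essential intersection; the 2-HBPs they define are still disjoint from $b_1$). Hence no routing argument based on that structure alone can prove disjointness of the defining arcs --- the ``main obstacle'' you flag is not a technical isotopy issue but the entire content of the implication, and it cannot be overcome without using the remaining vertices of the hexagon. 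The paper's proof does exactly that: the defining arcs of $a$ and of $b_2^1$, viewed as HBCs in $S_\alpha$, can be realized mutually disjointly because $i(a,b_1)=i(b_1,b_2)=i(b_2,a)=2$ by Proposition~\ref{prop-n-hex} (ii), and their intersections with $X$ are precisely the defining arcs of $c_1^1$ and $c_3^1$. You should restructure both halves of your argument around the vertices $a$ and $b_2$ rather than around $c_1$, $c_3$ and a third arc in $X$.
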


\begin{proof}
Suppose that there exists a hexagon $\Pi$ in $\mathscr{H}_n$ containing $b_1$, $c_1$ and $c_3$.
We then have a 6-tuple $(a, c_1, b_1, c_3, b_2, c_2)$ defining $\Pi$.
For $j=1, 2$, let $b_j^1$ denote the curve of $b_j$ distinct from $\alpha$.
Let $l_a$, $l_1$ and $l_2$ denote the defining arcs of $a$, $b_1^1$ and $b_2^1$ as curves in $S_{\alpha}$, respectively.
Each of $l_1$ and $l_2$ meets exactly one of the two boundary components of $S_{\alpha}$ corresponding to $\alpha$.
Since $b_1^1$ and $b_2^1$ are curves in the holed sphere cut off by $c_3$ from $S$, the arcs $l_1$ and $l_2$ meet the same boundary component of $S_{\alpha}$ corresponding to $\alpha$.
Proposition \ref{prop-n-hex} (ii) implies that $l_a$, $l_1$ and $l_2$ can mutually be disjoint.
The defining arcs of $c_1^1$ and $c_3^1$ as curves in $X$ are $l_a\cap X$ and $l_2\cap X$, respectively, and are thus disjoint.

Conversely, suppose that the defining arcs of $c_1^1$ and $c_3^1$ as curves in $X$, denoted by $r_1$ and $r_3$, respectively, are disjoint.
Label as $\partial_1$ and $\partial_2$ the components of $\partial S$ so that $\partial_2$ is contained in the pair of pants cut off by $b_1$ from $S$.
Let $\partial_3$ denote the boundary component of $S_{\alpha}$ that corresponds to $\alpha$ and is contained in the holed sphere cut off by $c_1^1$ from $S_{\alpha}$.
The component $\partial_3$ is also contained in the holed sphere cut off by $c_3^1$ from $S_{\alpha}$.
Let $b_1^1$ denote the curve of $b_1$ distinct from $\alpha$.
Each of $r_1$ and $r_3$ connects the two component of $\partial X$ corresponding to $\partial_1$ and $b_1^1$ (see Figure \ref{fig-arc-pf}).
\begin{figure}
\begin{center}
\includegraphics[width=4cm]{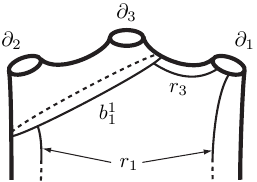}
\caption{}\label{fig-arc-pf}
\end{center}
\end{figure}

We now extend $r_1$ to an essential simple arc $l_a$ in $S_{\alpha}$ which connects $\partial_1$ and $\partial_2$.
We next extend $r_3$ to an essential simple arc $l_2$ in $S_{\alpha}$ which connects $\partial_1$ and $\partial_3$ and is disjoint from $l_a$.
Let $a$ and $b_2^1$ denote the curves in $S_{\alpha}$ defined by $l_a$ and $l_2$, respectively.
Since $l_a$ and $l_2$ are disjoint, there exists a curve $c_2^1$ disjoint from $a$ and $b_2^1$ and with the pair $\{ \alpha, c_2^1\}$ a 2-HBP in $S$.
Such a curve is unique up to isotopy.

We check that the 6-tuple $(a, c_1, b_1, c_3, b_2, c_2)$ defines a hexagon in $\mathscr{H}_n$, where $b_2=\{ \alpha, b_2^1\}$ and $c_2=\{ \alpha, c_2^1\}$.
Let $l_1$ denote the defining arc of $b_1^1$ as a curve in $S_{\alpha}$, which connects $\partial_2$ and $\partial_3$.
Since $l_2$ connects $\partial_1$ and $\partial_3$, we have $i(b_1^1, b_2^1)\neq 0$.
Similarly, we have $i(b_1^1, a)\neq 0$ and $i(a, b_2^1)\neq 0$.
If $c_2^1$ were disjoint from $b_1^1$, then $c_2^1$ would be disjoint from $b_1^1\cup a$, and we would have $c_2^1=c_1^1$ because $c_1^1$ is a boundary component of a regular neighborhood of $b_1^1\cup a$ in $S_{\alpha}$.
We also have $c_2^1=c_3^1$ because of a similar reason.
This contradicts $c_1^1\neq c_3^1$.
We thus have $i(b_1^1, c_2^1)\neq 0$.
In particular, $c_1^1$, $c_2^1$ and $c_3^1$ are mutually distinct.
The same kind of argument shows that $i(c_3^1, a)\neq 0$ and $i(c_1^1, b_2^1)\neq 0$.
Since $c_1$, $c_2$ and $c_3$ are mutually distinct 2-HBPs in $S$ containing $\alpha$, the curves $c_1^1$, $c_2^1$ and $c_3^1$ mutually intersect.
\end{proof}


\section{Hexagons in $\calcp_s(S_{g, 2})$}\label{subsec-s-hex}\label{sec-bs}

In this section, we discuss hexagons in $\calcp_s(S_{g, 2})$, and obtain results similar to those in the previous section (see Figure \ref{fig-shbp-c-hex} for such a hexagon).
\begin{figure}
\begin{center}
\includegraphics[width=12cm]{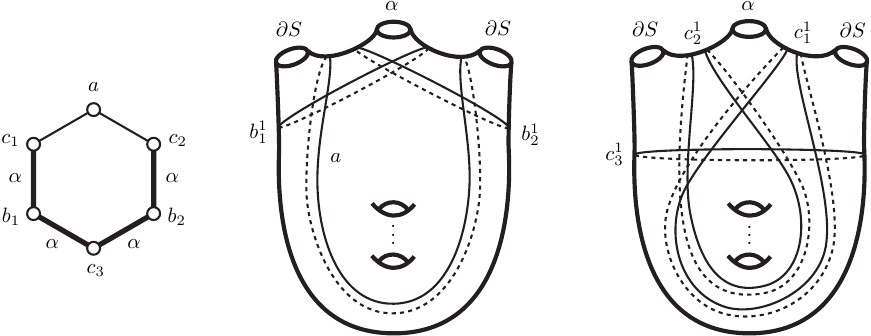}
\caption{A hexagon in $\calcp_s(S_{g, 2})$ with $b_j=\{ \alpha, b_j^1\}$ and $c_k=\{ \alpha, c_k^1\}$ for $j=1, 2$ and $k=1, 2, 3$.}\label{fig-shbp-c-hex}
\end{center}
\end{figure}
The proof is also similar, and its large part is thus omitted.
Throughout this section, we put $S=S_{g, 2}$ with $g\geq 2$, and mean by an HBP a separating one unless otherwise stated, because we mainly deal with $\calcp_s(S)$.

\begin{prop}\label{prop-s-hex}
Let $(a, c_1, b_1, c_3, b_2, c_2)$ be a 6-tuple defining a hexagon in $\calcp_s(S)$ such that
\begin{itemize}
\item $a$ is a 2-HBC; $b_1$ and $b_2$ are 1-HBPs; $c_1$, $c_2$ and $c_3$ are 2-HBPs; and
\item any of the four edges $\{ c_1, b_1\}$, $\{ b_1, c_3\}$, $\{ c_3, b_2\}$ and $\{ b_2, c_2\}$ is rooted.
\end{itemize}
Then the following two assertions hold:
\begin{enumerate}
\item The root curves of the four edges in the second condition are equal.
\item We have $i(b_1, b_2)=i(b_2, a)=i(a, b_1)=2$.
\end{enumerate}
\end{prop}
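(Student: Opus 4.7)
The plan is to mirror the proof of Proposition~\ref{prop-n-hex}, using the identical hexagon combinatorics while exploiting the rigidity of separating HBPs to handle the equality of root curves.

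For assertion (i), write $\alpha,\beta,\gamma,\delta$ for the root curves of the rooted edges $\{c_1,b_1\}$, $\{b_1,c_3\}$, $\{c_3,b_2\}$ and $\{b_2,c_2\}$, respectively. The first step is to apply Lemma~\ref{lem-shbp-sq}~(ii) twice: to the triple $(c_1,b_1,c_3)$ of type (2-HBP, 1-HBP, 2-HBP), yielding $\alpha=\beta$; and to $(c_3,b_2,c_2)$ of the same type, yielding $\gamma=\delta$. The remaining equality $\beta=\gamma$ is not directly covered by Lemma~\ref{lem-shbp-sq}, since the triple $(b_1,c_3,b_2)$ is of type (1-HBP, 2-HBP, 1-HBP). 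The plan to handle it is to argue by contradiction: if $\beta\neq\gamma$, then $c_3=\{\beta,\gamma\}$, and because $\alpha=\beta\in c_1$ and $\delta=\gamma\in c_2$, the hexagon conditions $i(a,c_1)=i(a,c_2)=0$ force $i(a,\beta)=i(a,\gamma)=0$, and hence $i(a,c_3)=0$; this contradicts the distance-three hexagon condition $i(a,c_3)\neq 0$.

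For assertion (ii), denote by $\alpha$ the common root curve and write $b_j=\{\alpha,b_j^1\}$ for $j=1,2$ and $c_k=\{\alpha,c_k^1\}$ for $k=1,2,3$. The argument is to be a verbatim translation of the proof of Proposition~\ref{prop-n-hex}~(ii): cut $S$ along $\alpha$, pass to the defining arcs $l_a,l_1,l_2$ in $S_\alpha$ of the curves $a$, $b_1^1$, $b_2^1$, and use the fact that $c_j^1$ is a boundary component of a regular neighborhood of the union of two of the other curves. The hexagon's disjointness and non-disjointness data force $l_a,l_1,l_2$ to be mutually non-isotopic but isotopable to be pairwise disjoint, giving $i(b_1,b_2)=2$; the remaining equalities $i(b_2,a)=i(a,b_1)=2$ follow by cyclically relabelling the hexagon.

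The main obstacle is the equality $\beta=\gamma$ in part~(i): it does not follow from any of the three clauses of Lemma~\ref{lem-shbp-sq}, so the (1-HBP, 2-HBP, 1-HBP) case must be ruled out by hand. The cleanest way to do so appears to be the distance-three contradiction sketched above, which is available only because the hexagon places $a$ opposite to $c_3$ while the rooted edges already route $\beta\in c_1$ and $\gamma\in c_2$. Once part~(i) is secured, the proof of part~(ii) is mechanical, although one must double-check that every regular-neighborhood and essential-arc step in the proof of Proposition~\ref{prop-n-hex}~(ii) remains valid in the separating setting; this is expected because the entire computation occurs inside $S_\alpha$ after $\alpha$ has been fixed, and the topological types of the remaining curves in that surface are not used.
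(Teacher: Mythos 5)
Your proposal is correct, and for assertion (i) it takes a genuinely different route from the paper. The paper disposes of (i) by citing Lemma 4.12 of \cite{kida-yama}, whereas you give a short self-contained argument inside this paper: Lemma \ref{lem-shbp-sq} (ii) applied to the triples $(c_1,b_1,c_3)$ and $(c_3,b_2,c_2)$ yields $\alpha=\beta$ and $\gamma=\delta$, and the remaining equality $\beta=\gamma$ follows from your distance-three contradiction ($\beta\in c_1$ and $\gamma\in c_2$ force $i(a,c_3)=i(a,\beta)+i(a,\gamma)=0$ if $c_3=\{\beta,\gamma\}$, against the hexagon condition $i(a,c_3)\neq 0$). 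I checked this against the definitions of rooted edges and of hexagons and it is sound; it is arguably cleaner than the long case analysis needed for the non-separating analogue, precisely because Lemma \ref{lem-shbp-sq} is available only for separating HBPs. For assertion (ii) you and the paper do the same thing, namely defer to the argument of Proposition \ref{prop-n-hex} (ii). Two small points of precision there: since $\alpha$ is now separating, $S_\alpha$ is disconnected and the whole computation takes place in the component $R$ of $S_\alpha$ containing $\partial S$ (with $Q$ the holed sphere cut off by $c_3^1$ from $R$), which the paper states explicitly; and the arcs actually used in that argument are the components of $c_j^1\cap Q$ rather than the defining arcs of $a$, $b_1^1$, $b_2^1$, so your description of the mechanism is slightly off, though this does not affect the validity of the deferral. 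Likewise, the remaining equalities are obtained by rerunning the same argument after the cyclic shift of the 6-tuple rather than by a literal relabelling (the shifted tuple does not satisfy the hypotheses of the proposition as stated), which is what the paper does in the non-separating case.
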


\begin{proof}
Assertion (i) is proved in \cite[Lemma 4.12]{kida-yama}.
Let $\alpha$ denote the common curve in assertion (i).
Let $T$ denote the component of $S_{\alpha}$ containing $\partial S$, whose genus is positive and less than $g$. 
For $j=1, 2$ and $k=1, 2, 3$, we define curves $b_j^1$ and $c_k^1$ so that $b_j=\{ \alpha, b_j^1\}$ and $c_k=\{ \alpha, c_k^1\}$. 
Let $Q$ denote the holed sphere cut off by $c_3^1$ from $T$.
It follows that $a$ and each $c_k^1$ are curves in $T$ and that each $b_j^1$ is a curve in $Q$.
Along an argument of the same kind as in the proof of Proposition \ref{prop-n-hex} (ii), we obtain the equality in assertion (ii).
\end{proof}

\begin{lem}\label{lem-s2hbp-lk}
Let $\psi \colon \calcp_s(S)\to \calcp_s(S)$ be a superinjective map.
Then for any 2-HBP $b$ in $S$, we have the equality
\[\psi(\lk(b))=\lk(\psi(b)),\]
where for a vertex $c$ of $\calcp_s(S)$, $\lk(c)$ denotes the link of $c$ in $\calcp_s(S)$.
\end{lem}

The proof of this lemma is a verbatim translation of the proof of Lemma \ref{lem-n2hbp-lk} once the following lemma is obtained.

\begin{lem}\label{lem-s-well}
Let $\psi \colon \calcp_s(S)\to \calcp_s(S)$ be a superinjective map. 
Suppose that for $k=1,2,3,4$, we have an HBP $a_k=\{\alpha, \alpha_k\}$ in $S$ such that $\{ a_1, a_2\}$ and $\{ a_3, a_4\}$ are edges of $\calcp_s(S)$. 
Then the root curves of the two edges $\{ \psi(a_1), \psi(a_2)\}$ and $\{ \psi(a_3), \psi(a_4)\}$ of $\calcp_s(S)$ are equal. 
\end{lem}

\begin{proof}
Since we have already shown that $\psi$ preserves 2-HBCs, 1-HBPs and 2-HBPs, respectively, the proof of \cite[Lemma 4.14]{kida-yama} is now valid for the present setting.
\end{proof}

Let us denote by $\mathscr{H}_s=\mathscr{H}_s(S)$ the set of all hexagons in $\calcp_s(S)$ satisfying the assumption in Proposition \ref{prop-s-hex}.
Along an argument of the same kind as in the proof of Lemma \ref{lem-n-hex-arc}, we can show the following:

\begin{lem}\label{lem-s-hex-arc}
Let $b_1$ be a 1-HBP in $S$ and let $\alpha$ denote the curve in $b_1$ that does not separate the two components of $\partial S$.
We denote by $Y$ the component of $S_{b_1}$ of positive genus and containing the curve of $b_1$ distinct from $\alpha$ as a boundary component.
Let $c_1=\{ \alpha, c_1^1\}$ and $c_3=\{ \alpha, c_3^1\}$ be 2-HBPs in $S$ with $c_1\neq c_3$ and $i(c_1, b_1)=i(c_3, b_1)=0$.
Then there exists a hexagon in $\mathscr{H}_s$ containing $b_1$, $c_1$ and $c_3$ if and only if the defining arcs of $c_1^1$ and $c_3^1$ as curves in $Y$ can be disjoint.
\end{lem}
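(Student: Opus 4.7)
The plan is to imitate the proof of Lemma~\ref{lem-n-hex-arc} essentially line by line, using Proposition~\ref{prop-s-hex} in place of Proposition~\ref{prop-n-hex} and adjusting the topological bookkeeping to the separating setting. First I fix the geometry. Write $b_1=\{\alpha,\beta\}$, where $\beta$ separates $\partial_1$ from $\partial_2$. Since $\alpha$ is separating in $S$ but does not separate $\partial_1$ from $\partial_2$, cutting $S$ along $\alpha$ yields two components: a positive-genus piece $P'$ with one boundary and no component of $\partial S$, and a piece $R\cong S_{g_2,3}$ containing $\partial_1,\partial_2$ together with one copy of $\alpha$ as its third boundary. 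Inside $R$, the curve $\beta$ is a 2-HBC encircling $\partial_1$ and the $\alpha$-boundary, and cutting $R$ along $\beta$ splits it into a pair of pants $Q$ (with boundaries $\partial_1,\alpha,\beta$) and $Y$ (with boundaries $\partial_2,\beta$ and genus $g_2\geq 1$). Both $c_1^1$ and $c_3^1$ lie in $Y$ and are 2-HBCs there encircling $\partial_2$ and $\beta$, so the defining arc of each in $Y$ is an essential simple arc from $\partial_2$ to $\beta$ disjoint from the curve.

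For the forward direction, assume a hexagon $\Pi\in\mathscr{H}_s$ containing $b_1,c_1,c_3$ is defined by $(a,c_1,b_1,c_3,b_2,c_2)$. By Proposition~\ref{prop-s-hex}(i) the four rooted edges have a common root curve, and this must be $\alpha$ since $b_1,c_1,c_3$ all share $\alpha$. Write $b_2=\{\alpha,b_2^1\}$. The curves $a,\beta,b_2^1$ are 2-HBCs in $R$ encircling suitable pairs of the three boundary components of $R$, with defining arcs $l_a,l_1,l_2$ in $R$. By Proposition~\ref{prop-s-hex}(ii) the three pairwise geometric intersection numbers equal $2$, so the arcs can be chosen pairwise disjoint. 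Since $l_a$ joins $\partial_1$ and $\partial_2$, which lie on opposite sides of $\beta$ in $R$, we may take $l_a$ to cross $\beta$ transversally in one point, and likewise for $l_2$. The restrictions $l_a\cap Y$ and $l_2\cap Y$ are then disjoint essential simple arcs in $Y$ joining $\partial_2$ to $\beta$, and by uniqueness of defining arcs up to isotopy they represent the defining arcs of $c_1^1$ and $c_3^1$ in $Y$, respectively.

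For the converse, given disjoint representatives $r_1,r_3$ of the defining arcs of $c_1^1,c_3^1$ in $Y$, extend $r_1$ through $Q$ to an essential simple arc $l_a$ in $R$ joining $\partial_1$ and $\partial_2$, and extend $r_3$ through $Q$ to an essential simple arc $l_2$ in $R$ joining the appropriate one of $\partial_1,\partial_2$ to the $\alpha$-boundary, arranging the extensions inside $Q$ so that $l_a$ and $l_2$ remain disjoint. Let $a$ be the 2-HBC in $S$ defined by $l_a$, let $b_2^1$ be the 2-HBC in $R$ defined by $l_2$, and set $b_2=\{\alpha,b_2^1\}$. A unique curve $c_2^1$ (up to isotopy) is disjoint from both $a$ and $b_2^1$ and makes $c_2=\{\alpha,c_2^1\}$ a 2-HBP. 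The verification that the 6-tuple $(a,c_1,b_1,c_3,b_2,c_2)$ defines a hexagon proceeds as at the end of the proof of Lemma~\ref{lem-n-hex-arc}.

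The main obstacle, requiring the most care, is confirming in the converse direction that the constructed 6-tuple lies in $\mathscr{H}_s$, i.e., that $a,b_2,c_2$ are vertices of $\calcp_s(S)$ rather than merely of $\calcp(S)$. This reduces to showing that $b_2^1$ and $c_2^1$ are separating in $S$: each is an HBC in $R$ and hence separates $R$, and gluing $P'$ back along $\alpha$ preserves this separation, making $b_2^1,c_2^1$ separating in $S$. A secondary subtlety in the forward direction is justifying that $l_a\cap Y$ is in fact the defining arc of $c_1^1$ in $Y$, which combines $l_a$ being arrangeable to cross $\beta$ exactly once with the uniqueness of defining arcs of 2-HBCs up to isotopy.
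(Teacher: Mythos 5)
Your proposal is correct and follows exactly the route the paper intends: the paper gives no separate proof of Lemma \ref{lem-s-hex-arc}, stating only that it follows ``along an argument of the same kind as in the proof of Lemma \ref{lem-n-hex-arc},'' and your adaptation — replacing Proposition \ref{prop-n-hex} by Proposition \ref{prop-s-hex}, working in the component $R$ of $S_\alpha$ containing $\partial S$, and checking that the constructed curves remain separating so the hexagon lies in $\calcp_s(S)$ — is precisely that translation, carried out with more detail than the paper records.
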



\section{The complexes $\cald(X, \partial)$ and $\cald(Y)$}\label{sec-d}

We introduce natural subcomplexes of the complexes of arcs for certain surfaces, motivated by Lemmas \ref{lem-n-hex-arc} and \ref{lem-s-hex-arc}.
These two lemmas describe a relationship between the link of a 1-HBP $a$ in $\calcp_n(S)$ or $\calcp_s(S)$ and the complex of arcs for a component of $S_a$ of positive genus.
Let us recall the complex of arcs for a surface.

\medskip

\noindent {\bf Complex $\cala(S)$.} Let $S$ be a surface with non-empty boundary. 
We define $V_a(S)$ as the set of isotopy classes of essential simple arcs in $S$.
Let $\cala(S)$ denote the abstract simplicial complex whose $n$-simplices are defined as a subset $\sigma$ of $V_a(S)$ such that we have $|\sigma|=n+1$ and have mutually disjoint representatives of elements of $\sigma$.
We often identify an element of $V_a(S)$ with its representative if there is no confusion.

\medskip

\noindent {\bf Complexes $\cald(X, \partial)$ and $\cald(Y)$.} Let $X=S_{g, 3}$ be a surface with $g\geq 1$, and fix a boundary component $\partial$ of $X$.
We define $\cald(X, \partial)$ as the full subcomplex of $\cala(X)$ spanned by all vertices that correspond to arcs in $X$ connecting a component of $\partial X\setminus \partial$ with another component of $\partial X\setminus \partial$.

Let $Y=S_{g, 2}$ be a surface with $g\geq 1$.
We define $\cald(Y)$ as the full subcomplex of $\cala(Y)$ spanned by all vertices that correspond to arcs in $Y$ connecting a component of $\partial Y$ with another component of $\partial Y$.

\medskip

The aim of this section is to show the following:

\begin{prop}\label{prop-d-inj}
In the above notation, any injective simplicial map from $\cald(X, \partial)$ into itself is surjective. Moreover, any injective simplicial map from $\cald(Y)$ into itself is also surjective.
\end{prop}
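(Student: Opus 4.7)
The plan is to prove Proposition \ref{prop-d-inj} by a direct combinatorial argument based on the flip structure of top-dimensional simplices in $\cald(X,\partial)$ and $\cald(Y)$.

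First, I would show that every top-dimensional simplex of $\cald(Y)$ (respectively $\cald(X,\partial)$) has the same finite dimension $d = d(g)$. A top-dimensional simplex of $\cald(Y)$ with $Y = S_{g,2}$ corresponds to a maximal family of pairwise disjoint, non-isotopic essential arcs joining the two boundary components; cutting $Y$ along such a family yields a disjoint union of disks, so an Euler characteristic count pins down the number of arcs in terms of $g$. The same reasoning applies to $\cald(X,\partial)$, since all arcs involved join the two boundary components of $X$ distinct from $\partial$, and the piece containing $\partial$ behaves uniformly.

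Next, I would establish a \emph{flip structure} on top-dimensional simplices: for each top-dimensional simplex $\sigma$ and each vertex $v \in \sigma$, there is a unique top-dimensional simplex $\sigma'$ with $\sigma \setminus \{v\} \subset \sigma'$ and $v \notin \sigma'$. Indeed, after cutting $Y$ along $\sigma \setminus \{v\}$, the arc $v$ sits inside a single quadrilateral piece as one of its two diagonals, and replacing it by the other diagonal produces $\sigma'$. Thus every top-dimensional simplex has exactly $d+1$ flip-neighbors, indexed by its vertices. Afterwards, I would prove connectedness of the flip graph on top-dimensional simplices, by induction on the complexity of the surface: any two maximal arc systems can be isotoped to share at least one arc, and then made equal by flips in the cut surface.

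Finally, let $f$ be an injective simplicial self-map of $\cald(Y)$. Since $f$ is injective on vertices and all top-dimensional simplices have the same dimension $d$, $f$ sends each top-dimensional simplex bijectively onto a top-dimensional simplex and preserves the flip relation. Hence $f$ sends the $(d+1)$-element set of flip-neighbors of any top-dimensional simplex $\sigma$ injectively into the $(d+1)$-element set of flip-neighbors of $f(\sigma)$. Equality of finite cardinalities forces a bijection on flip-neighborhoods, so by flip connectivity $f$ is a bijection on the set of all top-dimensional simplices. Since every vertex of $\cald(Y)$ lies in some top-dimensional simplex, $f$ is surjective on vertices, and hence on $\cald(Y)$. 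The same argument works verbatim for $\cald(X,\partial)$.

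The main obstacle is proving connectedness of the flip graph on top-dimensional simplices, which requires a careful induction on the topological complexity of the ambient surface, tailored to the constraint that all arcs must join the two distinguished boundary components. The dimension count and the local flip step are routine once set up, but the global flip-connectedness needs a genuine geometric argument tracking how flips propagate as the surface is reduced.
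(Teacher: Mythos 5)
Your overall architecture --- count the maximal simplices containing a codimension-one face, combine with chain-connectivity of maximal simplices, and let injectivity force surjectivity --- is exactly the paper's strategy. But your local ``flip'' step is wrong, and this is not a cosmetic error. Because every arc in $\cald(Y)$ joins the two distinct boundary components, the complementary regions of a maximal simplex in the associated punctured surface are \emph{squares} whose vertices alternate between the two punctures, not triangles; a maximal simplex of $\cald(S_{g,2})$ is a squaring by $4g$ arcs into $2g$ squares. Deleting one arc merges two distinct squares into a \emph{hexagon} with alternating vertex labels, and such a hexagon admits exactly \emph{three} diagonals joining the two punctures that cut it into two squares. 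So each codimension-one face lies in three maximal simplices, not two; your ``unique flip across a quadrilateral'' picture is the one for ideal triangulations and does not apply here. Worse, for $\cald(X,\partial)$ the count is not even constant: it is four when the complementary piece containing $\partial$ is an octagon-with-puncture (four boundary edges) and three otherwise, so the piece containing $\partial$ does \emph{not} ``behave uniformly.'' The paper has to prove separately that an injective map sends faces with count four to faces with count four (there are exactly two such faces per maximal simplex) before the counting argument closes. Your uniform treatment skips the step on which the $\cald(X,\partial)$ case actually turns.

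The second gap is your proof of flip-connectivity. The assertion that ``any two maximal arc systems can be isotoped to share at least one arc'' is false in general --- two maximal systems typically fill the surface jointly and share nothing --- so the induction you describe has no base. The paper instead proves ordinary connectivity of $\cald(Y)$ by Putman's trick (moving a fixed arc by the Dehn-twist generators of $\pmod(Y)$, each of which either fixes the arc or moves it to a disjoint one), and then upgrades to chain-connectivity \`a la Hatcher by showing that the link of every simplex of codimension at least two is connected; that last step requires the auxiliary complexes $\cala(Q,\Delta)$ of arcs with marked endpoints and the analysis of their (dis)connectivity in Proposition \ref{prop-a-conn}, including the low-complexity exceptional cases. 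None of this is routine, and your proposal does not supply a workable substitute.
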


The first subsection is preliminary to the proof of this proposition.
We introduce variants of the complex of arcs, and show that most of them are connected.
These variants are special ones of the complex introduced and denoted by $BZ(\Delta, \Delta^0)$ in \cite{harer}.
The proof of Proposition \ref{prop-d-inj} is presented in the second subsection.


\subsection{Connectivity of complexes of arcs}\label{subsec-a-conn}

Throughout this subsection, we fix a surface $Q=S_{g, p+1}$ with $g\geq 0$ and $p\geq 1$, and fix a boundary component $\partial_0$ of $Q$.
Let $R$ be the surface obtained from $Q$ by attaching a disk to $\partial_0$.
We label the components of $\partial Q$ other than $\partial_0$ as $\partial_1,\ldots, \partial_p$.
Suppose that we have a subset $\Delta$ of $\partial Q\setminus \partial_0$ with its decomposition $\Delta =\Delta^+\sqcup \Delta^-$ satisfying the following two conditions: For any $j=1,\ldots, p$,
\begin{itemize}
\item the two sets $\Delta^+\cap \partial_j$ and $\Delta^-\cap \partial_j$ are non-empty, are finite and have the same cardinality; and
\item along $\partial_j$, points of $\Delta^+\cap \partial_j$ and $\Delta^-\cap \partial_j$ appear alternatively.
\end{itemize}

\medskip

\noindent {\bf Complexes $\cala(Q, \Delta)$ and $\cala(R, \Delta)$.} We denote by $V_a(Q, \Delta)$ the set of isotopy classes relative to $\Delta$ of simple arcs $l$ in $Q$ such that
\begin{itemize}
\item $l$ connects a point of $\Delta^+$ with a point of $\Delta^-$, and meets $\partial Q$ only at its end points; and
\item $l$ is not homotopic relative to $\Delta$ to an arc in $\partial Q$ which meets $\Delta$ only at its end points.
\end{itemize} 
Let $\cala(Q, \Delta)$ denote the abstract simplicial complex whose $n$-simplices are defined as a subset $\sigma$ of $V_a(Q, \Delta)$ such that we have $|\sigma|=n+1$ and have representatives $l_0,\ldots, l_n$ of elements of $\sigma$ with $l_j\cap l_k=\partial l_j\cap \partial l_k$ for any distinct $j$ and $k$. 

The set $V_a(R, \Delta)$ and the complex $\cala(R, \Delta)$ are defined in the same manner after replacing $Q$ with $R$ in the last paragraph.
We often identify an element of $V_a(Q, \Delta)$ or $V_a(R, \Delta)$ with its representative if there is no confusion.

\medskip

In the rest of this subsection, we prove the following:

\begin{prop}\label{prop-a-conn}
We put $n=|\Delta^+\cap \partial_1|=|\Delta^-\cap \partial_1|$.
Then the following two assertions hold:
\begin{enumerate}
\item If $\cala(Q, \Delta)$ is not connected, then we have $(g, p)=(0, 1)$ and $n\leq 2$. 
If $(g, p, n)=(0, 1, 2)$, then $\cala(Q, \Delta)$ is of dimension 0 and consists of four vertices.
If $(g, p, n)=(0, 1, 1)$, then $\cala(Q, \Delta)=\emptyset$.
\item If $\cala(R, \Delta)$ is not connected, then we have $(g, p)=(0, 1)$ and $n\leq 3$. 
If $(g, p, n)=(0, 1, 3)$, then $\cala(R, \Delta)$ is of dimension 0 and consists of three vertices.
If $(g, p, n)=(0, 1, 2), (0, 1, 1)$, then $\cala(R, \Delta)=\emptyset$.
\end{enumerate}
\end{prop}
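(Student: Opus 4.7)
The plan is a surgery argument reducing the geometric intersection number of two arcs, combined with direct enumeration in the exceptional small cases. I would first handle non-emptiness: outside the exceptional cases, exhibit an essential arc (easy if $p \geq 2$ by joining a point of $\Delta^+ \cap \partial_j$ to one of $\Delta^- \cap \partial_k$ for distinct $j, k$; otherwise $(g, p) = (0, 1)$ and one checks the cases $n \geq 3$ for $Q$ and $n \geq 4$ for $R$ by explicit construction on the annulus or disk, noting the twist parameter gives a distinct isotopy class once enough points are present).

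For connectivity in the non-exceptional cases, fix two vertices $l$ and $l'$ of $\cala(Q, \Delta)$ in minimal position and induct on $k := |l \cap l'|$. The case $k = 0$ gives the edge $\{l, l'\}$. For $k \geq 1$, pick the intersection point $p \in l \cap l'$ adjacent along $l$ to an endpoint $x_2 \in \Delta^- \cap \partial l$, so the sub-arc $l_+$ of $l$ from $p$ to $x_2$ meets $l'$ only at $p$. Let $y \in \Delta^+ \cap \partial l'$ and let $l'_+$ be the sub-arc of $l'$ from $y$ to $p$. Set $l'' := l'_+ \cup l_+$, smoothed at $p$ and perturbed transversely into one of the four local regions at $p$, pushing $l_+$ off $l$ and $l'_+$ off $l'$. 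Since $l'$ does not meet $l_+$ beyond $p$ and $l$ does not meet $l'_+$ beyond $p$, the perturbation introduces no new crossings, and the resulting simple arc $l''$ runs from $y \in \Delta^+$ to $x_2 \in \Delta^-$, disjoint in its interior from both $l$ and $l'$. If $l''$ is essential, then $\{l, l''\}$ and $\{l', l''\}$ are both edges of $\cala(Q, \Delta)$, placing $l$ and $l'$ in the same component. The analogous argument applies to $\cala(R, \Delta)$.

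The main obstacle is verifying essentiality of $l''$. If $l''$ is homotopic rel $\Delta$ to a boundary arc $\beta$, then $l'' \cup \beta$ bounds a disk $D \subset Q$; I would analyze $D$ to produce either a bigon between $l$ and $l'$ (contradicting minimal position), a proof that $l$ or $l'$ is itself inessential (contradicting our hypotheses), or a configuration forcing us into an exceptional case. When one surgery choice yields an inessential $l''$, I would switch to the complementary half-gluing at $p$ (yielding an arc from $x_1 \in \Delta^+$ to $y' \in \Delta^-$), select a different intersection point nearest the other endpoint, or swap the roles of $l$ and $l'$; these provide enough flexibility outside the exceptional cases. Finally, in each exceptional case $(g, p) = (0, 1)$ with small $n$, the surface is an annulus or disk with few alternating marked points on one boundary, and I would enumerate the essential arcs explicitly (via the twist parameter for the annulus and the pair of boundary subarcs enclosed for the disk), matching the stated vertex counts and verifying by direct inspection that any two listed vertices intersect, so the complex is zero-dimensional and disconnected exactly as claimed.
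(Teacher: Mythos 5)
Your route is genuinely different from the paper's: the paper does not do surgery on intersecting arcs at all, but instead studies the forgetful simplicial map $\pi\colon \cala(R,\Delta)\to \cala^*(R)$, describes each fiber $\pi^{-1}(\pi(u))$ explicitly as an orbit of boundary twists, shows that the fiber over a suitable vertex (or the union of fibers over a suitable edge) spans a connected subcomplex, and then imports connectivity of $\cala(R)$ from Hatcher. That reduction is what lets the paper avoid the case analysis your surgery would require. Your plan could in principle work, but as written it has two genuine problems.

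First, the disjointness claim for $l''$ is false. Choosing $p$ adjacent along $l$ to $x_2$ guarantees only that $l_+$ meets $l'$ in $\{p\}$; it says nothing about the subarc $l'_+$ of $l'$ from $y$ to $p$, which can contain any number of the remaining points of $l\cap l'$. Hence the perturbed $l''=l'_+\cup l_+$ is disjoint from $l'$ but may still cross $l$ up to $k-1$ times, and the conclusion that $\{l,l''\}$ is an edge does not follow. (You cannot fix this by choosing $p$ extremal along both arcs simultaneously.) The induction you announce can absorb the correct weaker statement --- $\{l',l''\}$ is an edge and $|l''\cap l|<k$, so induct on the intersection number with $l$ --- but that is not the argument you wrote. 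Second, and more seriously, essentiality of $l''$ is exactly where the content of the proposition lies, and you leave it unresolved: note that in $\cala(Q,\Delta)$ the only admissible surgered arc is the one running from $\Delta^+$ to $\Delta^-$, so unlike the absolute arc-complex argument you cannot fall back on "one of the two surgered arcs is essential." You list possible escapes (the complementary gluing, a different intersection point, swapping $l$ and $l'$), but each of these changes which endpoints the new arc has and whether it is admissible, and verifying that at least one escape always succeeds outside $(g,p)=(0,1)$ with small $n$ is precisely the case analysis that would constitute the proof; without it the argument is incomplete. A smaller point: your non-emptiness dichotomy omits $p=1$, $g\geq 1$, where the "join distinct boundary components" construction is unavailable and you need, e.g., a non-separating arc.
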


As already mentioned, the complexes $\cala(Q, \Delta)$ and $\cala(R, \Delta)$ are special ones of the complex introduced and denoted by $BZ(\Delta, \Delta^0)$ in \cite{harer}, where its high connectivity is discussed.
The proof however does not care its connectivity when $g$ and $p$ are small.
We hence present a direct proof of Proposition \ref{prop-a-conn}.

In the case of $(g, p)=(0, 1)$, both $\cala(Q, \Delta)$ and $\cala(R, \Delta)$ consist of finitely many vertices.
The assertion in Proposition \ref{prop-a-conn} for this case can directly be checked (see Figure \ref{fig-dzero} for the case where the complexes are of dimension 0).
\begin{figure}
\begin{center}
\includegraphics[width=10cm]{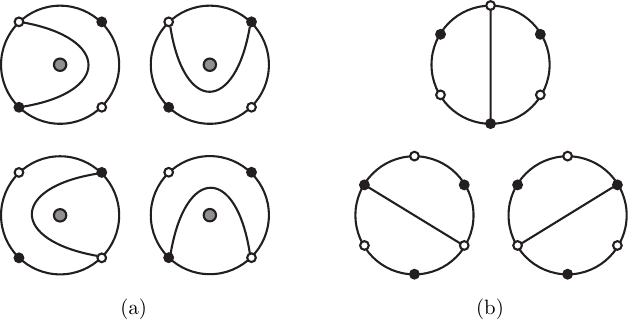}
\caption{(a) The four vertices of $\cala(Q, \Delta)$ when $(g, p, n)=(0, 1, 2)$. (b) The three vertices of $\cala(R, \Delta)$ when $(g, p, n)=(0, 1, 3)$.
White circles are points of $\Delta^+$, black circles are points of $\Delta^-$, and gray circles denote $\partial_0$.}\label{fig-dzero}
\end{center}
\end{figure}
In what follows, we therefore assume $(g, p)\neq (0, 1)$, and prove that $\cala(R, \Delta)$ is connected.
Along a similar argument, connectivity of $\cala(Q, \Delta)$ is also proved.
We thus omit its proof, making only a brief comment on it in the end of this subsection.

Let $\cala^*(R)$ denote the simplicial cone over $\cala(R)$ with its cone point $\ast$.
We have the simplicial map
\[\pi \colon \cala(R, \Delta)\to \cala^*(R)\]
defined by forgetting $\Delta$, where $\pi^{-1}(\ast)$ consists of all arcs in $\cala(R, \Delta)$ connecting two points of $\partial_j$ for some $j$ and homotopic relative to their end points to an arc in $\partial_j$.
The set $\pi^{-1}(\ast)$ may possibly be empty.

Let $u$ be an arc corresponding to a vertex of $\pi^{-1}(\cala(R))$.
We now observe that any arc corresponding to a vertex of the fiber $\pi^{-1}(\pi(u))$ is obtained by applying to $u$ the twists about the component(s) of $\partial R$ that $u$ meets.
Let us explain this fact more precisely.
We fix an orientation of $R$.
For $j=1,\ldots, p$, we put $n_j=|\Delta^+\cap \partial_j|=|\Delta^-\cap \partial_j|$, and set
\[\Delta^+\cap \partial_j=\{ x_1^j,\ldots, x_{n_j}^j\}\quad \textrm{and}\quad \Delta^-\cap \partial_j=\{ y_1^j,\ldots, y_{n_j}^j\}\]
so that $x_1^j, y_1^j, x_2^j, y_2^j,\ldots, x_{n_j}^j, y_{n_j}^j$ appear in this order along the orientation of $\partial_j$ induced by that of $R$.
We define $t_j$ as a homeomorphism of $R$ which is the identity outside a collar neighborhood $N_j$ of $\partial_j$ and satisfies the equalities
\[t_j(x_k^j)=y_k^j,\quad t_j(y_l^j)=x_{l+1}^j\quad \textrm{and}\quad t_j(y_{n_j}^j)=x_1^j\]
for any $k=1,\ldots, n_j$ and any $l=1,\ldots, n_j-1$.

Pick an arc $u$ corresponding to a vertex of $\pi^{-1}(\cala(R))$.
We first assume that the two points of $\partial u$ lie in distinct components of $\partial R$, say $\partial_1$ and $\partial_2$.
We then have the equality
\[\pi^{-1}(\pi(u))=\{\, t_1^zt_2^wu\mid z, w\in \mathbb{Z},\ z+w\in 2\mathbb{Z}\,\}.\tag{$\dagger$}\label{eq-zw}\]

We next assume that the two points of $\partial u$ lie in the same component of $\partial R$, say $\partial_1$.
Choose an arc $u_1$ in $\pi^{-1}(\pi(u))$ connecting $x_1^1$ and $y_1^1$.
We may assume that $u_1\cap \bar{N_1}$ consists of exactly two components $u_+$, $u_-$ with $x_1^1\in u_+$ and $y_1^1\in u_-$, where $\bar{N_1}$ denotes the closure of $N_1$.
Put $u_0=u_1\setminus N_1$.
For $k=2,\ldots, n_1$, we define an essential simple arc $u_k$ in $R$ as the union
\[u_k=u_+\cup u_0\cup t_1^{2(k-1)}u_-.\]
The arc $u_k$ connects $x_1^1$ and $y_k^1$, and belongs to $\pi^{-1}(\pi(u))$.
The set $\{ u_1,\ldots, u_{n_1}\}$ is a simplex of $\cala(R, \Delta)$.
We then have the equality
\[\pi^{-1}(\pi(u))=\{\, t_1^zu_k\mid z\in \mathbb{Z},\ k=1,\ldots, n_1\,\}.\tag{$\ddagger$}\label{eq-z}\]
The following two lemmas show that there exists an edge or a vertex of $\cala(R)$ whose inverse image under $\pi$ spans a connected subcomplex of $\cala(R, \Delta)$.

\begin{lem}\label{lem-uv}
Assume $g\geq 1$.
Let $\{ u, v\}$ be an edge of $\pi^{-1}(\cala(R))$ such that
\begin{itemize}
\item $u$ and $v$ are non-separating in $R$;
\item a single component of $\partial R$ contains both $\partial u$ and $\partial v$; and
\item when we cut $R$ along $u$ and obtain a connected surface, denoted by $R_u$, the arc $v$ connects the two boundary components of $R_u$ containing $u$.  
\end{itemize}
Then the full subcomplex of $\cala(R, \Delta)$ spanned by the union $\pi^{-1}(\pi(u))\cup \pi^{-1}(\pi(v))$ is connected. 
\end{lem}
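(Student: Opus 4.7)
The plan is to use the $t_1$-equivariant parametrization of the two fibers given by $(\ddagger)$. Applying the construction preceding $(\ddagger)$ to both $u$ and $v$, I would first choose simplices of $\cala(R,\Delta)$ inside each fiber: $\{u_1,\ldots,u_{n_1}\}\subset\pi^{-1}(\pi(u))$ with $u_1=u$, and $\{v_1,\ldots,v_{n_1}\}\subset\pi^{-1}(\pi(v))$ with $v_1=v$. The hypothesis that $\{u,v\}$ is an edge of $\pi^{-1}(\cala(R))$ ensures that $u_1$ and $v_1$ have disjoint representatives, so $\{u_1,v_1\}$ is an edge of the full subcomplex. Consequently, the subgraph $\Gamma_0$ spanned by $\{u_1,\ldots,u_{n_1}\}\cup\{v_1,\ldots,v_{n_1}\}$ is connected.

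Since $t_1\colon R\to R$ is a homeomorphism carrying $\Delta$ to itself (swapping $\Delta^+$ and $\Delta^-$, but preserving the unordered pair), it induces a simplicial automorphism of $\cala(R,\Delta)$. Hence each translate $t_1^z\Gamma_0$ is a connected subgraph, and $(\ddagger)$ gives
\[\bigcup_{z\in\mathbb{Z}}t_1^z\Gamma_0=\pi^{-1}(\pi(u))\cup\pi^{-1}(\pi(v)).\]
It therefore suffices to produce, for some (and hence by $t_1$-equivariance every) $z$, an edge of the full subcomplex joining a vertex of $t_1^z\Gamma_0$ to a vertex of $t_1^{z+1}\Gamma_0$. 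Induction on $|z|$ then yields connectivity of the full subcomplex spanned by $\pi^{-1}(\pi(u))\cup\pi^{-1}(\pi(v))$.

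The construction of this bridging edge is the technical heart of the argument and is where the third hypothesis on $v$ becomes essential. Cutting $R$ along $u$ gives a connected surface $R_u$ of genus $g-1$ whose boundary contains the two components $\partial_1^{+}$ and $\partial_1^{-}$ arising from $u$, and by hypothesis $v$ joins these two components; further cutting along $v$ thus merges them into a single boundary component of a surface $R_{u,v}$ of genus $g-1\geq 0$ with $p+1$ boundary components. Using that $g\geq 1$ guarantees $R_{u,v}$ has enough topological room, I would exploit this to construct an essential simple arc $w$ in $R_{u,v}$ whose endpoints (among the $\Delta$-points on the newly merged boundary component) match those of some $t_1v_{k_0}$ after regluing, and whose interior, when viewed back in $R$, is disjoint from a chosen radial representative of some $u_{l_0}\in\Gamma_0$. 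This would yield the desired edge $\{u_{l_0},t_1v_{k_0}\}$. The main obstacle is controlling the interplay between the Dehn twist $t_1$, supported in the collar $N_1$ of $\partial_1$, and the isotopy class of $w$; this will require a careful application of the change-of-coordinates principle inside $R_{u,v}$ to align the twisting of $w$ in $N_1$ with the prescribed $t_1$-translate while keeping disjointness with the chosen representative in $\Gamma_0$.
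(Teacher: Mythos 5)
Your reduction is the same as the paper's: using the parametrization $(\ddagger)$ of the fiber, and the fact that $\{u_1,\ldots,u_{n_1}\}$, $\{v_1,\ldots,v_{n_1}\}$ are simplices joined by the edge $\{u,v\}$, connectivity of the full subcomplex follows once one exhibits a path from a vertex of $\Gamma_0$ to a vertex of $t_1\Gamma_0$ inside $\pi^{-1}(\pi(u))\cup\pi^{-1}(\pi(v))$. (One small correction: $t_1$ does not induce an automorphism of all of $\cala(R,\Delta)$ when $p\geq 2$, since it sends an arc with exactly one endpoint on $\partial_1$ to an arc whose endpoints both lie in $\Delta^+$ or both in $\Delta^-$; but it does preserve the set of arcs with both endpoints on $\partial_1$, which is all that is needed here.)

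The genuine gap is that the bridging edge — which is the entire content of the lemma — is never constructed. You explicitly defer it ("the technical heart", "the main obstacle is controlling the interplay between the Dehn twist $t_1$ \dots and the isotopy class of $w$; this will require a careful application of the change-of-coordinates principle"), so the proof is incomplete exactly where it matters. Moreover, the sketch you give points in a direction that obscures the actual case division. The paper splits on $n=n_1$: if $n\geq 2$, the bridge lies entirely inside $\pi^{-1}(\pi(u))$ and is written down explicitly as $w=u_+\cup u_0\cup t_1^2u_-$ (this is $u_2$, disjoint up to isotopy from both $u$ and $t_1u$), so neither $v$ nor the third hypothesis is needed there. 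Only when $n=1$ must one pass through the other fiber, and there the third hypothesis — that $v$ joins the two boundary arcs of $R_u$ coming from $u$ — is used to conclude that $u$ is disjoint from $t_1v$ or from $t_1^{-1}v$, yielding the path $u,\,t_1v,\,t_1u$ or $u,\,v,\,t_1u$. Your proposed construction (an arc $w$ in $R_{u,v}$ matching some $t_1v_{k_0}$ after regluing and disjoint from some $u_{l_0}$) is roughly aimed at the $n=1$ situation, but the disjointness verification you identify as the obstacle is precisely what has to be proved, and without resolving it the lemma is not established.
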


\begin{proof}
We may assume that $\partial u$ and $\partial v$ are contained in $\partial_1$, and put $\partial =\partial_1$.
We use the same notation right before the lemma, and put $n=n_1$ and $t=t_1$.
We may also assume that $\partial u=\partial v=\{ x_1^1, y_1^1\}$ and that $u$ is decomposed into three arcs, $u=u_+\cup u_0\cup u_-$, so that $u_+$ and $u_-$ are the two components of $u\cap \bar{N}_1$ with $x_1^1\in u_+$ and $y_1^1\in u_-$; and we have $u_0=u\setminus N_1$.

To prove the lemma, by equation (\ref{eq-z}), it suffices to show that there exists a path in $\cala(R, \Delta)$ connecting $u$ and $tu$ and consisting of vertices in $\pi^{-1}(\pi(u))\cup \pi^{-1}(\pi(v))$.
If $n\geq 2$, then define an essential simple arc $w$ in $R$ as the union $w=u_+\cup u_0\cup t^2u_-$ (see Figure \ref{fig-ann-pf} (a)).
\begin{figure}
\begin{center}
\includegraphics[width=11cm]{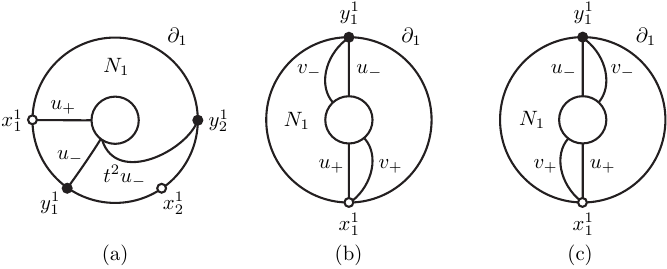}
\caption{}\label{fig-ann-pf}
\end{center}
\end{figure}
The sequence $u$, $w$, $tu$ defines a path in $\cala(R, \Delta)$.
We suppose $n=1$.
We may assume that $v$ is also decomposed into three arcs, $v=v_+\cup v_0\cup v_-$, so that $v_+$ and $v_-$ are the two components of $v\cap \bar{N}_1$ with $x_1^1\in v_+$ and $y_1^1\in v_-$; and we have $v_0=v\setminus N_1$.
The third assumption on $u$ and $v$ in the lemma implies that either case (b) or case (c) in Figure \ref{fig-ann-pf} holds.
The arc $u$ is therefore disjoint from either $tv$ or $t^{-1}v$. 
It follows that either the sequence $u$, $tv$, $tu$ or the sequence $u$, $v$, $tu$ defines a path in $\cala(R, \Delta)$.
\end{proof}

\begin{lem}\label{lem-u}
Assume $p\geq 2$.
Let $u$ be an arc corresponding to a vertex of $\cala(R, \Delta)$ and connecting distinct components of $\partial R$.
Then the full subcomplex of $\cala(R, \Delta)$ spanned by $\pi^{-1}(\pi(u))$ is connected.
\end{lem}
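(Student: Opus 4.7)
The plan is to use equation~(\ref{eq-zw}) to parameterize $\pi^{-1}(\pi(u))$ by the group $G := \{(z,w) \in \mathbb{Z}^2 : z+w \in 2\mathbb{Z}\}$ via the correspondence $(z,w) \leftrightarrow t_1^z t_2^w u$. For each $(a,b) \in G$, the homeomorphism $t_1^a t_2^b$ of $R$ induces a graph automorphism of the full subcomplex of $\cala(R,\Delta)$ on $\pi^{-1}(\pi(u))$, so to prove connectivity it suffices to show that $u$ is joined by an edge to $t_1^{\epsilon_1} t_2^{\epsilon_2} u$ for each $(\epsilon_1,\epsilon_2) \in \{\pm 1\}^2$. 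These four elements generate $G$, since $(1,1) \pm (1,-1)$ yield $(2,0)$ and $(0,2)$.

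Assume without loss of generality that $u$ has endpoints $x \in \Delta^+ \cap \partial_1$ and $y \in \Delta^- \cap \partial_2$. Let $P \subset R$ be a regular neighborhood of $u \cup \partial_1 \cup \partial_2$, which is a pair of pants with third boundary curve $\gamma$. Choose $t_1, t_2$ supported in disjoint collar neighborhoods of $\partial_1, \partial_2$ contained in $P$, so every $t_1^z t_2^w u$ admits a representative inside $P$, and disjointness in $P$ implies disjointness in $R$. Cutting $P$ along $u$ yields an annulus $A$ with boundary components $\gamma$ and a ``long'' circle obtained by joining $\partial_1$, $\partial_2$, and two copies of $u$.

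For each $(\epsilon_1, \epsilon_2) \in \{\pm 1\}^2$, the points $t_1^{\epsilon_1}(x)$ and $t_2^{\epsilon_2}(y)$ both lie on the long boundary of $A$ and are distinct from $x$ and $y$. I would construct a disjoint representative of $t_1^{\epsilon_1} t_2^{\epsilon_2} u$ as an essential simple arc $u'_{\epsilon_1, \epsilon_2}$ in $A$ connecting $t_1^{\epsilon_1}(x)$ to $t_2^{\epsilon_2}(y)$ with winding number zero around $\gamma$. Such an arc exists (by selecting the side of the long boundary determined by the signs), and viewed in $P$ it is disjoint from $u$. Its isotopy class rel $\Delta$ in $P$ is determined by its endpoints and its winding around $\gamma$; since each $t_j$ is supported in a collar of $\partial_j$ and hence does not affect winding around $\gamma$, the class $t_1^{\epsilon_1} t_2^{\epsilon_2} u$ has the same endpoints and also winding zero, so $u'_{\epsilon_1,\epsilon_2}$ represents it.

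The main obstacle is verifying the last step carefully: that the winding-zero arc actually represents $t_1^{\epsilon_1} t_2^{\epsilon_2} u$ rather than a variant $t_1^{\epsilon_1 + 2k_1} t_2^{\epsilon_2 + 2k_2} u$ with nontrivial $k_j$. This requires orientation-based bookkeeping in the collar neighborhoods of $\partial_1, \partial_2$, confirming that sliding the endpoint along the long boundary of $A$ from a neighborhood of $x$ to $t_1^{\epsilon_1}(x)$ (and analogously for $y$) matches the algebraic action of $t_j^{\epsilon_j}$ under the cyclic convention for $\Delta$-points. Once established, iterating via the $G$-action by graph automorphisms produces paths in the subgraph between any two vertices, proving the lemma.
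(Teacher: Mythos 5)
Your overall strategy---exhibit edges from $u$ to $t_1^{a}t_2^{b}u$ for a set of pairs $(a,b)$ generating $G=\{(z,w):z+w\in 2\mathbb{Z}\}$, then propagate by the $G$-action, which preserves the full subcomplex on $\pi^{-1}(\pi(u))$---is the same as the paper's. But the specific generating set you chose does not work, and the step you yourself flagged as ``the main obstacle'' is exactly where it fails. With the paper's convention that each $t_j$ moves the marked points \emph{forward} along the boundary orientation of $\partial_j$ induced by that of $R$, the two parallel push-offs of $u$ to its two sides are $t_1^{-1}t_2u$ and $t_1t_2^{-1}u$: sliding both endpoints to one side of $u$ moves the endpoint forward along one of $\partial_1,\partial_2$ and backward along the other, because the two boundary circles ``face each other'' across $u$. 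The same-sign elements $t_1t_2u$ and $t_1^{-1}t_2^{-1}u$ are \emph{not} disjoint from $u$: realizing $t_1t_2u$ by twisting $u$ inside the two collars, it leaves $\partial_1$ on one side of $u$ and arrives at $\partial_2$ on the other side, so it crosses $u$ an odd number of times; since the four endpoints are distinct marked points, the mod~$2$ intersection number is an invariant of the classes rel $\Delta$, hence $i(u,t_1t_2u)\neq 0$. (Equivalently, in the annulus $A$ obtained by cutting along $u$, the two embedded arc classes joining $t_1(x)$ to $t_2(y)$ are $t_1^{1-2n_1}t_2u$ and $t_1t_2^{1-2n_2}u$, not $t_1t_2u$.) So only the mixed-sign edges exist, and $(1,-1)$ and $(-1,1)$ generate merely the rank-one subgroup $\mathbb{Z}(1,-1)$ of $G$; your argument then only connects $u$ to the vertices $t_1^{k}t_2^{-k}u$, not to all of $\pi^{-1}(\pi(u))$.

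The missing ingredient is an edge transverse to the push-off direction, and the paper gets it by exploiting the fact that simplices of $\cala(R,\Delta)$ only require $l_j\cap l_k=\partial l_j\cap\partial l_k$, i.e., arcs may share endpoints: $u$ and $t_2^{2}u$ have the same endpoint $x$ on $\partial_1$, agree outside the collar $N_2$, and can be made to meet only at $x$, so $\{u,t_2^{2}u\}$ is an edge. Together with the push-off edge $\{u,t_1^{-1}t_2u\}$ this gives the pairs $(0,2)$ and $(-1,1)$, which do generate $G$, and the argument closes. If you repair your proof by replacing the two same-sign edges with $\{u,t_2^{2}u\}$ (or $\{u,t_1^{2}u\}$), the rest of your write-up goes through.
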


\begin{proof}
We may assume that $u$ connects $\partial_1$ with $\partial_2$.
Both $\{ u, t_2^2u\}$ and $\{ u, t_1^{-1}t_2u\}$ are edges of $\cala(R, \Delta)$.
The lemma then follows from equation (\ref{eq-zw}).
\end{proof}

\begin{proof}[Proof of connectivity of $\cala(R, \Delta)$ in the case of $(g, p)\neq (0, 1)$]
We note that $\cala(R)$ is connected by \cite[Theorem (a)]{hatcher}.
We also note that for any vertex $x$ of $\cala(R, \Delta)$, the image of the link of $x$ in $\cala(R, \Delta)$ under the map $\pi \colon \cala(R, \Delta)\to \cala^*(R)$ contains the intersection of $\cala(R)$ with the link of $\pi(x)$ in $\cala^*(R)$.

We suppose $g\geq 1$.
There exists an edge $\{ u, v\}$ of $\cala(R, \Delta)$ satisfying the condition in Lemma \ref{lem-uv}.
Let $w$ be a vertex of $\cala(R, \Delta)$.
Since $\cala(R)$ is connected, there exists a path in $\cala(R)$ joining $\pi(w)$ to $\pi(u)$.
The fact stated in the end of the previous paragraph implies that there exists a path in $\cala(R, \Delta)$ joining $w$ to a vertex in $\pi^{-1}(\pi(u))$.
Since any vertex in $\pi^{-1}(\pi(u))$ is joined to $u$ in $\cala(R, \Delta)$ by Lemma \ref{lem-uv}, the vertices $w$ and $u$ are joined through a path in $\cala(R, \Delta)$.
Connectivity of $\cala(R, \Delta)$ follows.
If $g=0$ and $p\geq 2$, then we can prove connectivity of $\cala(R, \Delta)$ in a similar way, using Lemma \ref{lem-u} in place of Lemma \ref{lem-uv}.
\end{proof}

Let us make a comment on the proof of connectivity of $\cala(Q, \Delta)$ in the case of $(g, p)\neq (0, 1)$.
Let $\cala^*(Q)$ denote the simplicial cone over $\cala(Q)$ with its cone point $\ast$.
We then have the simplicial map $\pi \colon \cala(Q, \Delta)\to \cala^*(Q)$ defined by forgetting $\Delta$.
To prove connectivity of $\cala(Q, \Delta)$ along the proof for $\cala(R, \Delta)$, we need to know connectivity of the full subcomplex of $\cala(Q)$ spanned by $\pi(V_a(Q, \Delta))\setminus \{ \ast \}$, in place of connectivity of $\cala(R)$.
The set $\pi(V_a(Q, \Delta))\setminus \{ \ast \}$ is equal to the set of all vertices corresponding to arcs connecting two points of $\partial Q\setminus \partial_0$.
Connectivity of this full subcomplex follows from \cite[Theorem (a)]{hatcher}.


\subsection{Injections of $\cald(X, \partial)$ and $\cald(Y)$}

Let $X=S_{g, 3}$ and $Y=S_{g, 2}$ be surfaces with $g\geq 1$, and fix a boundary component $\partial$ of $X$.
In this subsection, we discuss the properties of the complexes $\cald(X, \partial)$ and $\cald(Y)$ stated in the following two lemmas, and prove Proposition \ref{prop-d-inj}.
Let $\mathcal{E}$ be a simplicial complex with $\dim \mathcal{E}=N<\infty$.
We mean by a {\it chamber} of $\mathcal{E}$ a simplex of $\mathcal{E}$ of dimension $N$.
We say that $\mathcal{E}$ is {\it chain-connected} if for any two chambers $\sigma$, $\tau$ of $\mathcal{E}$, there exists a sequence of chambers of $\mathcal{E}$, $\sigma_0, \sigma_1,\ldots, \sigma_m$, such that we have $\sigma_0=\sigma$ and $\sigma_m=\tau$ and for any $j=0, 1,\ldots, m-1$, the intersection $\sigma_j\cap \sigma_{j+1}$ is a simplex of codimension 1.

\begin{lem}\label{lem-d-x}
In the above notation, the following assertions hold:
\begin{enumerate}
\item Let $\sigma$ be a simplex of $\cald(X, \partial)$ of codimension 1.
We denote by $X_{\sigma}$ the surface obtained by cutting $X$ along arcs in $\sigma$.
Then the number of chambers of $\cald(X, \partial)$ containing $\sigma$ is 3 if the component of $X_{\sigma}$ containing $\partial$ contains exactly two arcs in $\sigma$, and otherwise that number is 4. 
\item The complex $\cald(X, \partial)$ is chain-connected.
\end{enumerate}
\end{lem}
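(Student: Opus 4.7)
The plan is to analyze the combinatorial structure of the cut surface $X_\sigma$ for simplices $\sigma$ of $\cald(X,\partial)$. The starting point is the characterization of maximal simplices: $\tau$ is maximal if and only if no component of $X_\tau$ has its boundary meeting both $\partial_1$ and $\partial_2$, where $\partial_1,\partial_2$ denote the two components of $\partial X\setminus\partial$. Otherwise one could add an arc joining such a pair inside the component in question, contradicting maximality. Together with an Euler-characteristic computation this also determines the dimension of $\cald(X,\partial)$.

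First I would prove assertion (i). Given a codimension-one simplex $\sigma$, by definition some extension $\sigma\cup\{\alpha\}$ is maximal. Applying the maximality characterization to $\sigma\cup\{\alpha\}$, I would find that exactly one component $Z$ of $X_\sigma$ has boundary meeting both $\partial_1$ and $\partial_2$, and every admissible extension of $\sigma$ lies in $Z$. Cutting $Z$ along any such extension must split $Z$ into two components, each meeting only one of $\partial_1,\partial_2$. Since the boundary of $Z$ alternates between arc-sides of arcs of $\sigma$ and pieces of $\partial X$, this constraint drastically restricts the boundary pattern of $Z$ and forces $Z$ to be a disk of one of only a few short combinatorial types. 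Splitting into two cases depending on whether a piece of $\partial$ occurs on $\partial Z$, a direct case check produces three and four extensions respectively, and the condition ``the component of $X_\sigma$ containing $\partial$ contains exactly two arcs in $\sigma$'' will correspond exactly to the first case.

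For assertion (ii), the plan is a standard flip-graph argument. Call two maximal simplices adjacent if they share a codimension-one face; assertion (i) says each arc of a maximal simplex admits two or three such flips. Given maximal simplices $\tau_0,\tau_1$, measure their distance by $i(\tau_0,\tau_1)=\sum_{a\in\tau_0,\,b\in\tau_1}i(a,b)$. When this vanishes, $\tau_0\cup\tau_1$ is a simplex and maximality forces $\tau_0=\tau_1$. Otherwise I would pick $a\in\tau_0$ meeting some arc of $\tau_1$ and perform a flip at $a$ producing $\tau_0'$ with $i(\tau_0',\tau_1)<i(\tau_0,\tau_1)$; iterating yields the required sequence of adjacent maximal simplices. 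The main obstacle will be this flip-reduction step: one has to show that at least one of the two or three flip candidates at $a$ strictly decreases the total intersection with $\tau_1$. This is where the explicit description of $Z$ from assertion (i) becomes essential, since it lets one track how arcs of $\tau_1$ enter and exit $Z$ through its few boundary sides and choose the replacement arc that disentangles them from $a$.
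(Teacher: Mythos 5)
Your proof of assertion (i) rests on a characterization of maximal simplices that cannot be correct. Every vertex of $\cald(X, \partial)$ is an arc joining the two components $\partial_1$, $\partial_2$ of $\partial X\setminus \partial$, so every component of $X_{\tau}$ whose boundary contains an arc of $\tau$ automatically meets both $\partial_1$ and $\partial_2$; your criterion ``no component of $X_{\tau}$ meets both $\partial_1$ and $\partial_2$'' is therefore never satisfied by any non-empty simplex and cannot detect maximality. For the same reason, the subsequent claim that cutting the exceptional component $Z$ along a new arc yields two pieces each meeting only one of $\partial_1$, $\partial_2$ is impossible: the new arc itself lies on the boundary of both pieces and has one endpoint on each of $\partial_1$ and $\partial_2$. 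The correct description (the one the paper uses after shrinking boundary components to punctures) is that $\tau$ is maximal exactly when the complementary regions are $2g$ squares, with vertices alternating between the punctures $p_1, p_2$ coming from $\partial_1, \partial_2$, together with one digon whose interior contains the puncture $p_0$ coming from $\partial$; such a simplex has $4g+1$ arcs. Removing one arc either merges two squares into a hexagon (which can be re-split in exactly three ways, the digon being untouched and contributing two edges to the component containing $p_0$) or merges the digon with a square into a once-punctured square (which can be re-split in exactly four ways). Your case analysis of $Z$ would have to be rebuilt from this description; as written it starts from a false premise.

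For assertion (ii) your flip-graph strategy is genuinely different from the paper's, but the step you yourself identify as the main obstacle --- that among the two or three admissible flips at a chosen arc $a\in \tau_0$ at least one strictly decreases $\sum_{a\in \tau_0,\, b\in \tau_1} i(a, b)$ --- is precisely the content of the statement and is nowhere established. It is not true in general that flipping an arbitrary edge meeting $\tau_1$ decreases total intersection; connectivity of flip graphs of ideal triangulations already requires a careful innermost-arc or combing argument, and here the supply of admissible flips is smaller, so the difficulty is if anything worse. The paper sidesteps this entirely: it first proves connectivity of the vertex set of $\cald(X, \partial)$ by Putman's trick (a generating set of $\pmod(X)$ consisting of Dehn twists, each of which either fixes a base arc or moves it to a disjoint arc), and then upgrades connectivity to chain-connectivity by Hatcher's argument, reducing the claim to connectivity of the link of every simplex of codimension at least two; those links are identified with complexes of the form $\cala(Q, \Delta)$, whose connectivity is Proposition \ref{prop-a-conn}. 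If you wish to keep the flip approach you must actually prove the intersection-reduction step; as it stands the argument has a hole exactly where the work lies.
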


\begin{lem}\label{lem-d-y}
In the above notation, the following assertions hold:
\begin{enumerate}
\item For any simplex $\sigma$ of $\cald(Y)$ of codimension 1, the number of chambers of $\cald(Y)$ containing $\sigma$ is 3. 
\item The complex $\cald(Y)$ is chain-connected.
\end{enumerate}
\end{lem}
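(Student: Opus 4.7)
The plan is to mirror the strategy used for Lemma \ref{lem-d-x}, adapting it to $Y=S_{g,2}$. The central idea is to analyze the cut surface $Y_\tau$ for a maximal simplex $\tau$ of $\cald(Y)$, exploiting the rigid alternating combinatorics of its boundary.

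First I would compute the dimension of $\cald(Y)$. A standard Euler-characteristic computation gives $\chi(Y_\tau)=|\tau|-2g$; along each boundary circle of $Y_\tau$, arc-pieces and $\partial Y$-pieces alternate, and the $\partial_1$-pieces and $\partial_2$-pieces themselves alternate. Maximality of $\tau$ forces every component of $Y_\tau$ either to be a rectangle both of whose arc-sides come from a single arc of $\tau$, or to be a single exceptional piece of small complexity in which every essential arc from $\partial_1$ to $\partial_2$ is isotopic in $Y$ to some arc of $\tau$. Carrying out this case analysis should give $\dim \cald(Y)=2g$, so $|\tau|=2g+1$, together with an explicit description of the exceptional piece.

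For assertion (i), pick a codimension-one simplex $\sigma$ and a maximal simplex $\tau=\sigma\cup\{a\}\supset\sigma$. Re-gluing $Y_\tau$ along the two sides of $a$ produces $Y_\sigma$, and the description of $Y_\tau$ from the first step identifies $Y_\sigma$ up to an explicit local modification. I would then enumerate, up to isotopy in $Y$, the essential arcs from $\partial_1$ to $\partial_2$ disjoint from $\sigma$: inside the low-complexity non-rectangle region these should be exactly three, corresponding to the three ``diagonals'' of a triangle-like local substructure. The main obstacle is ruling out spurious candidates---arcs in $Y_\sigma$ that wind or wander through the exceptional piece---by showing that each such candidate is $Y$-isotopic (possibly crossing arcs of $\sigma$) to one of the three distinguished arcs; this should follow from a careful bigon/innermost-disk argument exploiting that $\partial Y$ has only two components.

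For assertion (ii), I would establish chain-connectivity by induction on the number of arcs on which two maximal simplices $\tau$ and $\tau'$ disagree. If $|\tau\cap\tau'|=|\tau|-1$ they already share a codimension-one face; otherwise, using connectivity of $\cala(Y)$ (cf.\ the arguments in Section \ref{subsec-a-conn}) together with the three-way local flip structure furnished by (i), I would produce a maximal simplex $\tau''$ closer to both $\tau$ and $\tau'$, so that the induction closes. The delicate point is performing these flips coherently: after each elementary move the local picture inside the exceptional piece must be updated, and verifying this local consistency is what I expect to be the main technical hurdle.
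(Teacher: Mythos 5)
Your structural model of the cut surface is wrong, and the dimension count that follows from it is incorrect. For a maximal simplex $\tau$ of $\cald(Y)$, the complement $Y_\tau$ is \emph{not} a union of rectangles each carrying a single arc of $\tau$ plus one exceptional piece: passing to the punctured surface $Y^*$ obtained by shrinking the two boundary components to punctures $p_\pm$, every arc of $\cald(Y)$ becomes an ideal arc from $p_+$ to $p_-$, every complementary region of $\tau$ has an even number of sides with vertices alternating between $p_+$ and $p_-$, digons are excluded (their two edges would be isotopic arcs), and any region with six or more sides admits an essential diagonal from a $p_+$-corner to a $p_-$-corner, contradicting maximality. Hence $Y_\tau$ is a tiling of $Y^*$ by squares, each bounded by four \emph{distinct} arcs of $\tau$, and the Euler characteristic count gives $|\tau|=4g$ with $2g$ squares --- not $|\tau|=2g+1$. (Check $g=1$: a twice-punctured torus is cut into two squares by four disjoint, pairwise non-isotopic arcs joining the punctures.) Once the correct picture is in place, assertion (i) is immediate and needs no bigon or innermost-disk argument: deleting one arc merges two squares into a hexagon with alternating vertex labels, and exactly three of its diagonals join a $p_+$-corner to a $p_-$-corner and split it into two squares. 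Your ``three diagonals of a triangle-like local substructure'' gestures at the right count but is not supported by your (incorrect) description of $Y_\tau$.

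For assertion (ii) there is a genuine missing ingredient. Connectivity of the full arc complex $\cala(Y)$ does not by itself let you interpolate between two maximal simplices of $\cald(Y)$ through codimension-one faces, because $\cald(Y)$ only contains arcs joining the two distinct boundary components, and your inductive step (``produce a maximal simplex $\tau''$ closer to both'') is not shown to close. The standard route --- and the one the paper takes --- is Hatcher's argument: chain-connectivity follows from (a) connectivity of the $1$-skeleton of $\cald(Y)$, proved via the Putman generator trick for $\pmod(Y)$, and (b) connectivity of the link in $\cald(Y)$ of every simplex of codimension at least two. Point (b) is the real technical content: the link of such a simplex $\sigma$ is identified with a join of complexes $\cala(Y_j^*,\Delta_j)$ of arcs in the cut-up pieces with prescribed alternating boundary markings, and one must prove these are connected (Proposition \ref{prop-a-conn}), including a case analysis of the low-complexity exceptional pieces. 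Your proposal never engages with these marked-boundary arc complexes, so the chain-connectivity claim is unsupported as written.
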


Before proving these two lemmas, let us recall basic facts on punctured surfaces and ideal arcs, which are fully discussed in \cite{mosher}.
Let $T$ be a closed surface of positive genus $g$, and let $P$ be a non-empty finite subset of $T$.
The pair $(T, P)$ is then called a {\it punctured surface}.
Let $I$ denote the closed unit interval.
We mean by an {\it ideal arc} in $(T, P)$ the image of a continuous map $f\colon I\rightarrow T$ such that
\begin{itemize}
\item we have $f(\partial I)\subset P$ and $f(I\setminus \partial I)\subset T\setminus P$;
\item $f$ is injective on $I\setminus \partial I$; and
\item there exists no closed disk $D$ embedded in $T$ with $\partial D=f(I)$ and $(D\setminus \partial D)\cap P=\emptyset$.
\end{itemize}
Two ideal arcs $l_1$, $l_2$ in $(T, P)$ are called {\it isotopic} if the equality $l_1\cap P=l_2\cap P$ holds; and $l_1$ and $l_2$ are isotopic relative to $l_1\cap P$ as arcs in $(T\setminus P)\cup (l_1\cap P)$.
We mean by an {\it ideal triangulation} of $(T, P)$ is a cell division $\delta$ of $T$ such that
\begin{enumerate}
\item[(a)] the set of 0-cells of $\delta$ is $P$;
\item[(b)] any 1-cell of $\delta$ is an ideal arc in $(T, P)$; and
\item[(c)] any 2-cell of $\delta$ is a {\it triangle}, that is, it is obtained by attaching a Euclidean triangle $\tau$ to the 1-skeleton of $\delta$, mapping each vertex of $\tau$ to a 0-cell of $\delta$, and each edge of $\tau$ to a 1-cell of $\delta$.
\end{enumerate}
Let $S$ be a surface of genus $g$ with $|P|$ boundary components.
Suppose that $T$ is obtained from $S$ by shrinking each component of $\partial S$ into a point, and that $P$ is the set of points into which components of $\partial S$ are shrunken.
The natural map from $S$ onto $T$ induces the bijection from $V_a(S)$ onto the set of isotopy classes of ideal arcs in $(T, P)$.
Under this identification, a chamber of $\cala(S)$ corresponds to an ideal triangulation of $T$.

\begin{proof}[Proof of Lemma \ref{lem-d-y} (i)]
Let $Y^*$ denote the punctured surface obtained from $Y$ by shrinking each component of $\partial Y$.
We identify $V_a(Y)$ with the set of isotopy classes of ideal arcs in $Y^*$.
Any chamber $\sigma$ of $\cald(Y)$ gives rise to a {\it squaring} of $Y^*$, that is, when we cut $Y^*$ along arcs in $\sigma$, we obtain finitely many squares whose vertices are punctures of $Y^*$ and whose edges are arcs in $\sigma$.
We see that
\begin{itemize}
\item for any square $\Pi$ of the squaring, diagonal vertices of $\Pi$ correspond to the same puncture of $Y^*$, and the two vertices of any edge of $\Pi$ correspond to distinct punctures of $Y^*$ (see Figure \ref{fig-sq} (a)); and
\begin{figure}
\begin{center}
\includegraphics[width=10cm]{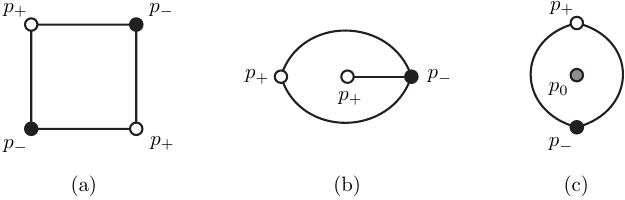}
\caption{(a) A square with $p_{\pm}$ the punctures of $Y^*$; (c) A digon with $p_{\pm}$ and $p_0$ the punctures of $X^*$.}\label{fig-sq}
\end{center}
\end{figure}
\item for any arc $l$ in $\sigma$, the two squares having $l$ as an edge are distinct.
In other words, for any square $\Pi$ of the squaring, any two distinct edges of $\Pi$ correspond to distinct arcs in $\sigma$.
\end{itemize}
The latter is proved as follows.
Let $\Pi$ denote the square in Figure \ref{fig-sq} (a).
Any two opposite edges of $\Pi$ are not identified in $Y^*$ because $Y^*$ is orientable.
If some two edges of $\Pi$ having a common vertex were identified in $Y^*$, then we would have Figure \ref{fig-sq} (b) in $Y^*$ or the figure obtained by exchanging $p_+$ and $p_-$.
When we have Figure \ref{fig-sq} (b), the square $\Pi$ is a neighborhood of $p_+$ in $Y^*$, and there can be only one ideal arc in $Y^*$ starting at $p_+$.
This is impossible.
When we have another figure, we also obtain a contradiction by replacing $p_+$ with $p_-$.

Any chamber of $\cald(Y)$ consists of $4g$ arcs, and the number of squares in the squaring associated with it is $2g$.
If we cut $Y^*$ along arcs in a simplex of $\cald(Y)$ of codimension 1, then we obtain one hexagon $H$ and $2g-2$ squares.
There exist exactly three arcs in $H$ each of which decomposes $H$ into two squares.
Lemma \ref{lem-d-y} (i) follows.
\end{proof}

\begin{proof}[Proof of Lemma \ref{lem-d-x} (i)]
Let $X^*$ denote the punctured surface obtained from $X$ by shrinking each component of $\partial X$.
Let $p_0$ denote the puncture of $X^*$ corresponding to $\partial$.
As in the proof of Lemma \ref{lem-d-y} (i), if we cut $X^*$ along arcs in a chamber of $\cald(X, \partial)$, then we obtain $2g$ squares and one digon containing $p_0$ (see Figure \ref{fig-sq} (c)).
Any chamber of $\cald(X, \partial)$ consists of $4g+1$ arcs.

Let $\tau$ be a simplex of $\cald(X, \partial)$ of codimension 1.
We cut $X^*$ along arcs in $\tau$, and obtain finitely many components.
If the component containing $p_0$ has exactly four edges, then the number of chambers containing $\tau$ is 4.
This is because there exist exactly four arcs lying in that component and corresponding to a vertex of $\cald(X, \partial)$, as drawn in Figure \ref{fig-dzero} (a), where the gray circle denotes $p_0$.
Otherwise, the component containing $p_0$ has exactly two edges, we have one hexagon, and the other components are squares.
The number of chambers containing $\tau$ is then 3.
This is because there exist exactly three arcs lying in that hexagon and corresponding to a vertex of $\cald(X, \partial)$, as drawn in Figure \ref{fig-dzero} (b).
Lemma \ref{lem-d-x} (i) follows.
\end{proof}

\begin{proof}[Proof of Lemma \ref{lem-d-y} (ii)]
We prove that $\cald(Y)$ is connected, using the technique due to Putman \cite{putman-conn} to show connectivity of a simplicial complex on which $\pmod(Y)$ acts.
Let $l$ be the arc in Figure \ref{fig-d} (b).
We pick an arc $r$ corresponding to a vertex of $\cald(Y)$, and show that $l$ and $r$ can be connected by a path in $\cald(Y)$.
We define $T$ as the set consisting of the Dehn twists about the curves in Figure \ref{fig-d} (a) and their inverses.
\begin{figure}
\begin{center}
\includegraphics[width=11cm]{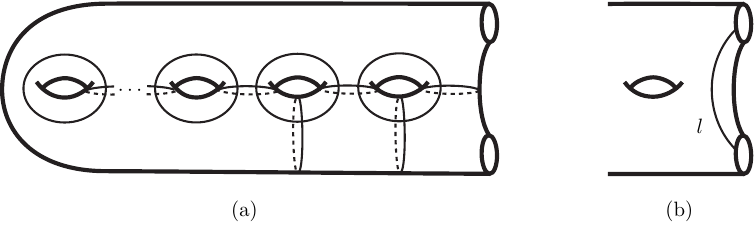}
\caption{(a) $\pmod(Y)$ is generated by Dehn twists about these curves.}\label{fig-d}
\end{center}
\end{figure}
It is known that $\pmod(Y)$ is generated by $T$ (see \cite{gervais}).
Since $l$ and $r$ are sent to each other by an element of $\pmod(Y)$, there exist elements $h_1,\ldots, h_n$ of $T$ with $r=h_1\cdots h_nl$.
We note that for any $h\in T$, either $hl=l$ or $hl$ and $l$ are disjoint.
The sequence of vertices of $\cald(Y)$,
\[l,\ h_1l,\ h_1h_2l,\ \ldots,\ h_1\cdots h_nl=r,\]
therefore forms a path in $\cald(Y)$.
Connectivity of $\cald(Y)$ follows.

To prove Lemma \ref{lem-d-y} (ii), it suffices to show that the link of any simplex of $\cald(Y)$ of codimension at least 2 is connected.
The idea to derive chain-connectivity from connectivity of such a link is due to Hatcher \cite{hatcher} and is also used in the proof of \cite[Proposition 4.7]{bm}.

Let $\sigma$ be a simplex of $\cald(Y)$ of codimension at least 2.
We identify the arc corresponding to a vertex of $\cald(Y)$ with an ideal arc in the punctured surface $Y^*$. 
Let $p_+$ and $p_-$ denote the two punctures of $Y^*$. 
We cut $Y^*$ along arcs in $\sigma$, and then obtain finitely many surfaces $Y_1^*,\ldots, Y_m^*$.
For $j=1,\ldots, m$, let $\Delta_j^+$ and $\Delta_j^-$ denote the sets of points of $Y_j^*$ corresponding to $p_+$ and $p_-$, respectively, and set $\Delta_j=\Delta_j^+\cup \Delta_j^-$.

For any $j=1,\ldots, m$, the set $\Delta_j$ with this decomposition satisfies the two conditions stated in the beginning of Section \ref{subsec-a-conn}.
Each vertex of the complex $\cala(Y_j^*, \Delta_j)$ is identified with a vertex of the link of $\sigma$ in $\cald(Y)$.
If there exist distinct $j, k=1,\ldots, m$ with both $\cala(Y_j^*, \Delta_j)$ and $\cala(Y_k^*, \Delta_k)$ non-empty, then the link of $\sigma$ is connected.
Otherwise, since $\sigma$ is of codimension at least 2, there exists a unique $j$ with $\cala(Y_j^*, \Delta_j)$ of dimension at least 1.
By Proposition \ref{prop-a-conn} (ii), $\cala(Y_j^*, \Delta_j)$ is connected.
\end{proof}

Lemma \ref{lem-d-x} (ii) can be proved similarly by using Proposition \ref{prop-a-conn} and using Figure \ref{fig-hbchbp} (a) in place of Figure \ref{fig-d} (a).

\begin{proof}[Proof of Proposition \ref{prop-d-inj}]
We first note that any vertex of $\cald(X, \partial)$ lies in a chamber of $\cald(X, \partial)$, and the same property holds for $\cald(Y)$.
Let $\psi \colon \cald(Y)\to \cald(Y)$ be an injective simplicial map.
Pick a chamber $\sigma$ of $\cald(Y)$.
Injectivity of $\psi$ and Lemma \ref{lem-d-y} (i) imply that for any face $\rho$ of $\psi(\sigma)$ of codimension 1, any chamber of $\cald(Y)$ containing $\rho$ is in the image of $\psi$.
By Lemma \ref{lem-d-y} (ii), any chamber of $\cald(Y)$ is in the image of $\psi$.
Surjectivity of $\psi$ follows.

Let $\phi \colon \cald(X, \partial)\to \cald(X, \partial)$ be an injective simplicial map.
To prove surjectivity of $\phi$, we fix the notation.
For a simplex $\tau$ of $\cald(X, \partial)$ of codimension 1, we define $n(\tau)$ as the number of chambers of $\cald(X, \partial)$ containing $\tau$.
Let $X^*$ denote the punctured surface obtained from $X$ by shrinking each component of $\partial X$.
Let $p_0$ denote the puncture of $X^*$ corresponding to $\partial$.

Let $\sigma$ be a chamber of $\cald(X, \partial)$.
Cutting $X^*$ along arcs in $\sigma$, we obtain one digon containing $p_0$ and $2g$ squares.
There are exactly two arcs in $\sigma$ whose union is the boundary of the digon.
If $l$ is any of those two arcs in $\sigma$, then cutting $X^*$ along arcs in $\sigma \setminus \{ l\}$, we obtain one square containing $p_0$ and $2g-1$ squares.
By Lemma \ref{lem-d-x} (i), we have $n(\sigma \setminus \{ l\})=4$.
For any arc $l'$ in $\sigma$ except for those two arcs, cutting $X^*$ along arcs in $\sigma \setminus \{ l'\}$, we obtain one digon containing $p_0$, one hexagon and $2g-2$ squares.
By Lemma \ref{lem-d-x} (i), we have $n(\sigma \setminus \{ l'\})=3$.
We showed that there are exactly two faces $\tau$ of $\sigma$ of codimension 1 with $n(\tau)=4$, and that any other face $\tau'$ of $\sigma$ of codimension 1 satisfies $n(\tau')=3$.

Let $\tau$ be a face of $\sigma$ of codimension 1.
Injectivity of $\phi$ implies that if $n(\tau)=4$, then $n(\phi(\tau))=4$.
It follows that both of the two faces $\rho$ of $\phi(\sigma)$ of codimension 1 with $n(\rho)=4$ is the image of some $\tau$ with $n(\tau)=4$ under $\phi$. 
Injectivity of $\phi$ again implies that if $n(\tau)=3$, then $n(\phi(\tau))=3$.
For any face $\rho$ of $\phi(\sigma)$ of codimension 1, any chamber of $\cald(X, \partial)$ containing $\rho$ is thus in the image of $\phi$.

Since $\sigma$ is an arbitrary chamber of $\cald(X, \partial)$, by Lemma \ref{lem-d-x} (ii), any chamber of $\cald(X, \partial)$ is in the image of $\phi$.
Surjectivity of $\phi$ follows.
\end{proof}


\section{Surjectivity of superinjective maps}\label{sec-surj}

We are now ready to show surjectivity of any superinjective map from $\calcp_n(S)$ into itself and any superinjective map from $\calcp_s(S)$ into itself.

\subsection{The case of $p=2$}

Let $S=S_{g, 2}$ be a surface with $g\geq 2$.
Recall that we have the simplicial map $\pi \colon \calc(S)\to \calc^*(\bar{S})$, where $\bar{S}$ is the closed surface obtained from $S$ by attaching disks to all components of $\partial S$.
For a vertex $v$ of $\calc(\bar{S})$, we define $\calc(S)_v$ as the full subcomplex of $\calc(S)$ spanned by $\pi^{-1}(v)$, which is connected by \cite[Theorem 7.1]{kls}.
We also have the simplicial maps
\[\theta_n\colon \calcp_n(S)\to \calc^*(\bar{S}),\quad \theta_s\colon \calcp_s(S)\to \calc^*(\bar{S})\]
associated with $\pi$.
For a vertex $v$ of $\calc(\bar{S})$, we define $\calcp_n(S)_v$ as the full subcomplex of $\calcp_n(S)$ spanned by $\theta_n^{-1}(v)$.
Similarly, we define $\calcp_s(S)_v$ as the full subcomplex of $\calcp_s(S)$ spanned by $\theta_s^{-1}(v)$.
The following lemma will be used in the sequel.

\begin{lem}\label{lem-fiber-conn}
Let $S=S_{g, 2}$ be a surface with $g\geq 2$.
Then
\begin{enumerate}
\item for any non-separating curve $\alpha$ in $\bar{S}$, the complex $\calcp_n(S)_{\alpha}$ is connected.
\item for any separating curve $\beta$ in $\bar{S}$, the complex $\calcp_s(S)_{\beta}$ is connected.
\end{enumerate}
\end{lem}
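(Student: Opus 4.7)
The plan is to establish connectivity of each fiber by a combinatorial argument combined with Putman's technique as in the proof of Lemma \ref{lem-d-y}(ii). I describe the argument for part (i); part (ii) is analogous, with a case split depending on whether the two capped disks lie on the same or on different components of $\bar{S}\setminus \beta$.

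Let $\partial_1, \partial_2$ denote the two components of $\partial S$, and let $D_1, D_2\subset \bar{S}$ be the disks attached to $S$ to form $\bar{S}$. Fix a representative of $\alpha$ disjoint from $D_1\cup D_2$. A vertex of $\calcp_n(S)_{\alpha}$ is an HBP $\{c_1, c_2\}$ with $c_1, c_2$ isotopic to $\alpha$ in $\bar{S}$; since $g\geq 2$, the curves $c_1, c_2$ cobound a canonical annulus $A\subset \bar{S}$ (the unique annular component of $\bar{S}\setminus (c_1\cup c_2)$), and the HBP type of $\{c_1, c_2\}$ equals $|\{\, i : D_i\subset A\,\}|\in \{1, 2\}$. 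Choose a basepoint $v_0\in \calcp_n(S)_{\alpha}$ realized as a 1-HBP whose annulus contains $D_1$, and let $G\subset \mod^*(S)$ denote the preimage of the stabilizer of the isotopy class of $\alpha$ in $\mod^*(\bar{S})$ under the surjection extending $\iota$; this group preserves the fiber $\calcp_n(S)_{\alpha}$ setwise.

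The first step is to show that $G$ acts transitively on the vertex set of $\calcp_n(S)_{\alpha}$, modulo edges joining HBPs of different topological types. By the change-of-coordinates principle applied to the cut surface $\bar{S}\setminus \alpha$, any two HBPs of the same topological type in the fiber lie in a single $G$-orbit, using elements of $\mod^*(S)\setminus \pmod(S)$ to exchange $D_1$ and $D_2$ when necessary. To join a 2-HBP $\{c_1, c_2\}$ (whose annulus contains both $D_1, D_2$) to a 1-HBP by an edge of the fiber, I choose a curve $c_3$ parallel to $c_1$ and $c_2$ inside $A$ that separates $D_1$ from $D_2$; then $\{c_1, c_3\}$ is a 1-HBP disjoint from $\{c_1, c_2\}$ in $S$.

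The second step is to select a generating set $T$ of $G$ such that each $t\in T$ either fixes $v_0$ or sends it to a vertex adjacent to $v_0$ in $\calcp_n(S)_{\alpha}$. Via the Birman exact sequence, $G$ is generated by Dehn twists about curves in $\bar{S}$ disjoint from $\alpha$, bounding-pair maps $t_{\gamma}t_{\delta}^{-1}$ for HBPs of such curves, point-pushing elements of $P(S)$ along loops based at the centers of $D_1, D_2$, and a boundary-swap element from $\mod^*(S)$. With careful choice of $v_0$ and of the support of each generator, each $t\in T$ acts by an operation supported in a region sharing at most one curve with $v_0$, producing either $v_0$ itself or an HBP adjacent to $v_0$. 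Putman's argument then shows that $G\cdot v_0$ spans a connected subcomplex, which by transitivity and the cross-type edges of the first step is all of $\calcp_n(S)_{\alpha}$. The principal obstacle is the adjacency verification for the point-pushing generators, which act nontrivially on $v_0$: representative loops must be chosen so that the associated isotopy interacts with $v_0$ only near a single curve of $v_0$, thus yielding a single-edge move.
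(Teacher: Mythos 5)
Your strategy (Putman's trick applied to the subgroup $G\subset\mod^*(S)$ preserving the fiber) is genuinely different from the paper's, which deduces the lemma in a few lines from two previously established facts: the fiber $\calc(S)_{\alpha}$ of the curve complex over $\alpha$ is connected (Theorem 7.1 of \cite{kls}), and the link of each of its vertices is connected (Lemmas 4.5 and 4.10 of \cite{kida-yama}). Given these, a path $c_1=e_0,e_1,\ldots,e_m=d_1$ in $\calc(S)_{\alpha}$ produces a chain of HBPs $\{e_i,e_{i+1}\}$ in the fiber in which consecutive terms share a curve, and link-connectivity lets one interpolate between two HBPs of the fiber sharing a curve. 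Your Step 1 is sound: change of coordinates does give transitivity of $G$ on the $1$-HBPs and on the $2$-HBPs of the fiber, and inserting a third parallel curve in the annulus of a $2$-HBP yields a rooted edge to a $1$-HBP, so Putman's first condition holds.

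The gap is exactly where you flag it, and it is not a routine verification. Consider the point-push $P(\gamma)$ of $D_1$ along an essential loop $\gamma$ based at the centre of $D_1$. Since $D_1$ lies in the annulus cobounded by the two curves $c_1,c_2$ of $v_0$, every loop needed to generate $\pi_1(\bar S)$ (other than the core of that annulus) must cross $c_1\cup c_2$; the best you can arrange is that $\gamma$ meets only $c_1$, in which case $P(\gamma)$ fixes $c_2$ and $P(\gamma)v_0=\{P(\gamma)c_1,\,c_2\}$. This vertex shares the curve $c_2$ with $v_0$ but is in general \emph{not} adjacent to it, since $i(P(\gamma)c_1,c_1)\neq 0$ for a loop crossing $c_1$. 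Thus Putman's second condition reduces to connecting, inside the fiber, two HBPs that share a curve $c_2$ --- that is, to the connectivity of the set of curves $c$ with $\{c,c_2\}$ a vertex of the fiber, under the disjointness relation. That is precisely the link-connectivity statement (Lemma 4.5, resp.\ 4.10, of \cite{kida-yama}) on which the paper's proof rests, and your argument neither proves it nor cites it; the same issue recurs for the lifts of generators of the stabilizer of $\alpha$ in $\mod^*(\bar S)$ whose supports meet $v_0$. A secondary weakness: the asserted generating set of $G$ is not justified (the stabilizer of $\alpha$ in $\mod^*(\bar S)$ also requires an element exchanging the two sides of $\alpha$ and an orientation-reversing element, and each such generator's effect on $v_0$ must be checked as well). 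Until the interpolation step for HBPs sharing a curve is supplied or imported, the proof is incomplete.
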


\begin{proof}
Pick a non-separating curve $\alpha$ in $\bar{S}$.
By \cite[Lemma 4.5]{kida-yama}, the link of any vertex of $\calc(S)_{\alpha}$ in $\calc(S)_{\alpha}$ is connected.
Combining this fact with connectivity of $\calc(S)_{\alpha}$, we obtain assertion (i).
Similarly, assertion (ii) is proved by using \cite[Lemma 4.10]{kida-yama} asserting that for any separating curve $\beta$ in $\bar{S}$, the link of any vertex of $\calc(S)_{\beta}$ in $\calc(S)_{\beta}$ is connected.
\end{proof}

We define $V_n(\bar{S})$ as the subset of $V(\bar{S})$ consisting of non-separating curves in $\bar{S}$, and define $\calc_n(\bar{S})$ as the full subcomplex of $\calc(\bar{S})$ spanned by $V_n(\bar{S})$.

We now outline the proof of surjectivity of a superinjective map $\phi \colon \calcp_n(S)\to \calcp_n(S)$.
We first show that $\phi$ sends the fiber of $\theta_n$ over a vertex of $V_n(\bar{S})$ onto the fiber of $\theta_n$ over some vertex of $V_n(\bar{S})$.
It follows that $\phi$ induces a map $\bar{\phi}$ from $V_n(\bar{S})$ into itself.
We next show that $\bar{\phi}$ defines a superinjective map from $\calc_n(\bar{S})$ into itself.
The latter map is induced by an element of $\mod^*(\bar{S})$ due to Irmak \cite{irmak-ns}, and is thus surjective.
We then conclude surjectivity of $\phi$.

\begin{thm}\label{thm-bn-surj}
Let $S=S_{g, 2}$ be a surface with $g\geq 2$.
Then any superinjective map $\phi \colon \calcp_n(S)\to \calcp_n(S)$ is surjective.
\end{thm}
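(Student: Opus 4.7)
The strategy is to reduce the theorem to Irmak's result \cite{irmak-ns} that any superinjective map from $\calc_n(\bar S)$ into itself is induced by an element of $\mod^*(\bar S)$, and in particular is surjective. The plan has three parts: build from $\phi$ an induced map $\bar\phi\colon V_n(\bar S)\to V_n(\bar S)$; show that $\phi$ sends each fiber of $\theta_n$ \emph{onto} the corresponding fiber; and lift the surjectivity of $\bar\phi$ back to $\phi$.

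For the induced map, for each non-separating curve $\alpha$ in $S$, Lemmas~\ref{lem-rooted} and~\ref{lem-n-well} produce a well-defined non-separating curve $R(\alpha)$ in $S$, namely the root of $\phi(e)$ for any rooted edge $e$ of $\calcp_n(S)$ with root $\alpha$. Applied to both curves of a 1-HBP $\{\alpha,\alpha'\}$, this forces $\phi(\{\alpha,\alpha'\})=\{R(\alpha),R(\alpha')\}$, so $R(\alpha)$ and $R(\alpha')$ are distinct and HBP-equivalent; the same holds for 2-HBPs via Lemma~\ref{lem-phi-nskhbp}. Hence $R$ descends to a map $\bar\phi\colon V_n(\bar S)\to V_n(\bar S)$, and $\phi$ sends each fiber $\calcp_n(S)_{\bar\alpha}$ into $\calcp_n(S)_{\bar\phi(\bar\alpha)}$.

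The main obstacle is to promote this last "into" to an "onto". I would fix a 1-HBP $b_1$ and a curve $\alpha$ of $b_1$, and use Lemma~\ref{lem-n-hex-arc} to identify the set of 2-HBPs in $\lk(b_1)$ containing $\alpha$ with the vertex set of $\cald(X,\partial)$, where $X$ is the positive-genus component of $S_{b_1}$ and $\partial$ is the boundary component of $X$ coming from the other curve of $b_1$; adjacency of two such 2-HBPs inside a hexagon of $\mathscr{H}_n$ corresponds exactly to disjointness of the associated defining arcs. Since $\phi$ preserves the topological types of the vertices of such hexagons (Lemmas~\ref{lem-phi-nskhbp} and~\ref{lem-phi-jhbc}) and assigns root curves uniformly (Lemma~\ref{lem-n-well}), it induces an injective simplicial map from $\cald(X,\partial)$ into the analogous complex $\cald(X',\partial')$ associated with $\phi(b_1)$. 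Composing with a simplicial isomorphism $\cald(X',\partial')\cong \cald(X,\partial)$ and invoking Proposition~\ref{prop-d-inj}, this map is surjective. Combining this with Lemma~\ref{lem-n2hbp-lk} for 2-HBPs and with connectivity of the fibers (Lemma~\ref{lem-fiber-conn}(i)), I would conclude that $\phi$ maps $\calcp_n(S)_{\bar\alpha}$ onto $\calcp_n(S)_{\bar\phi(\bar\alpha)}$.

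Granting the fiber-onto property, $\bar\phi$ becomes a simplicial self-map of $\calc_n(\bar S)$: disjoint curves in $\bar S$ admit disjoint HBP lifts whose $\phi$-images are disjoint HBPs projecting under $\pi$ to the images of the original curves under $\bar\phi$. Superinjectivity of $\bar\phi$ follows dually from that of $\phi$, using the fact that intersecting curves in $\bar S$ always have intersecting lifts in $S$. Irmak's theorem \cite{irmak-ns} then provides $\bar\gamma\in \mod^*(\bar S)$ with $\bar\phi=\bar\gamma_*$, and in particular $\bar\phi$ is surjective. Combined with the fiber-onto property this gives surjectivity of $\phi$ on all HBPs; surjectivity on 2-HBCs (the only HBCs when $p=2$) then follows by locating any 2-HBC $\alpha_0$ in the link of some 2-HBP $b$ in the image of $\phi$ and applying Lemma~\ref{lem-n2hbp-lk} to a preimage of $b$.
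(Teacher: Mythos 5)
Your proposal follows essentially the same route as the paper's proof: it builds the induced map $\bar\phi$ on $V_n(\bar S)$ via the root-curve map, promotes $\phi$ to a bijection on each fiber of $\theta_n$ by combining Lemma \ref{lem-n2hbp-lk} for 2-HBPs with the identification (via Lemma \ref{lem-n-hex-arc}) of part of the link of a 1-HBP with $\cald(X,\partial)$ and Proposition \ref{prop-d-inj}, uses connectivity of the fibers, and then invokes Irmak's theorem for $\calc_n(\bar S)$ exactly as the paper does. The only slip is in the labeling of the distinguished boundary component: for the 2-HBPs in $\lk(b_1)$ containing $\alpha$, the defining arcs join the $\partial S$-side boundary of $X$ to the boundary coming from the \emph{other} curve of $b_1$, so $\partial$ must be the boundary component of $X$ corresponding to $\alpha$ itself, not to the other curve.
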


\begin{proof}
Throughout this proof, we mean by an HBP in $S$ a non-separating HBP in $S$.
For a vertex $a$ of $\calcp_n(S)$, we denote by $\lk(a)$ the link of $a$ in $\calcp_n(S)$.
We denote by $\lk(a)^0$ the set of vertices in $\lk(a)$.
Let $V_n(S)$ denote the subset of $V(S)$ consisting of non-separating curves in $S$.
By Lemmas \ref{lem-root-curve} and \ref{lem-n-well}, we have a map $\Phi \colon V_n(S)\to V_n(S)$ satisfying the equality $\phi(\{ \alpha, \beta \})=\{ \Phi(\alpha), \Phi(\beta)\}$ for any HBP $\{ \alpha, \beta \}$ in $S$.

In Lemma \ref{lem-n2hbp-lk}, we have shown the equality $\phi(\lk(b))=\lk(\phi(b))$ for any 2-HBP $b$ in $S$.
Behavior of $\phi$ in the link of a 1-HBP is obtained in the following:

\begin{claim}\label{claim-bn-eq-lk}
For any 1-HBP $a$ in $S$ and any curve $\alpha$ in $a$, we have the equality
\[\phi(\{\, b\in \lk(a)^0\mid \alpha \in b \,\})=\{\, c\in \lk(\phi(a))^0\mid \Phi(\alpha)\in c\,\}.\]
\end{claim}

\begin{proof}
Let $X$ and $X'$ be the components of $S_a$ and $S_{\phi(a)}$ of positive genus, respectively.
We define $\partial$ and $\partial'$ as the boundary components of $X$ and $X'$ that correspond to $\alpha$ and $\Phi(\alpha)$, respectively.
We claim that $\phi$ induces an injective simplicial map from $\cald(X, \partial)$ into $\cald(X', \partial')$.

There is a one-to-one correspondence between essential simple arcs in $X$ connecting the two components of $\partial X \setminus \partial$ and curves in $X$ cutting off a pair of pants containing the two components of $\partial X \setminus \partial$.
The pair of $\alpha$ and such a curve in $X$ is a 2-HBP in $S$.
The same property holds for $X'$ and $\partial'$ in place of $X$ and $\partial$, respectively.
By Lemma \ref{lem-n-hex-arc}, $\phi$ induces a simplicial map from $\cald(X, \partial)$ into $\cald(X', \partial')$, which is injective because so is $\phi$.

By Proposition \ref{prop-d-inj}, this induced map is surjective.
It follows that $\phi$ sends the set of 2-HBPs in $\lk(a)$ containing $\alpha$ onto the set of 2-HBPs in $\lk(\phi(a))$ containing $\Phi(\alpha)$.
Let $\alpha'$ denote the curve in $a$ distinct from $\alpha$.
Applying the same argument to $\alpha'$ in place of $\alpha$, we see that $\phi$ sends the set of 2-HBPs in $\lk(a)$ containing $\alpha'$ onto the set of 2-HBPs in $\lk(\phi(a))$ containing $\Phi(\alpha')$.
This is equivalent to that $\phi$ sends the set of 1-HBPs in $\lk(a)$ containing $\alpha$ onto the set of 1-HBPs in $\lk(\phi(a))$ containing $\Phi(\alpha)$.
The claim follows.
\end{proof}

\begin{claim}\label{claim-bn-fiber}
For any HBP $a$ in $S$, we have the equality
\[\phi(\{\, b\in U_n(S)\mid \theta_n(b)=\theta_n(a)\,\})=\{\, c\in U_n(S)\mid \theta_n(c)=\theta_n(\phi(a))\,\},\]
where $U_n(S)$ denotes the set of vertices of $\calcp_n(S)$.
\end{claim}

\begin{proof}
Put $\alpha =\theta_n(a)$ and $\beta=\theta_n(\phi(a))$.
These are non-separating curves in $\bar{S}$.
Since $\phi$ preserves disjoint and equivalent HBPs and since $\calcp_n(S)_{\alpha}$ is connected by Lemma \ref{lem-fiber-conn} (i), the map $\phi$ induces a simplicial map from $\calcp_n(S)_{\alpha}$ into $\calcp_n(S)_{\beta}$.
By Lemma \ref{lem-n2hbp-lk} and Claim \ref{claim-bn-eq-lk}, this induced map sends the link of each vertex $u$ of $\calcp_n(S)_{\alpha}$ onto the link of $\phi(u)$.
The claim is proved.
\end{proof}

Claim \ref{claim-bn-fiber} implies that $\phi$ induces a map $\bar{\phi}$ from $V_n(\bar{S})$ into itself with $\bar{\phi}(\theta_n(a))=\theta_n(\phi(a))$ for any HBP $a$ in $S$.

\begin{claim}\label{claim-n-si}
The map $\bar{\phi}\colon V_n(\bar{S})\to V_n(\bar{S})$ defines a superinjective map from $\calc_n(\bar{S})$ into itself.
\end{claim}

\begin{proof}
For any $\alpha, \beta \in V_n(\bar{S})$, we have $i(\alpha, \beta)=0$ if and only if there exist a vertex $a$ of $\calcp_n(S)_{\alpha}$ and a vertex $b$ of $\calcp_n(S)_{\beta}$ with $i(a, b)=0$.
Let us refer this fact as $(\ast)$.

Pick $\alpha, \beta \in V_n(\bar{S})$.
If $i(\alpha, \beta)=0$, then by $(\ast)$, there exist a vertex $a$ of $\calcp_n(S)_{\alpha}$ and a vertex $b$ of $\calcp_n(S)_{\beta}$ with $i(a, b)=0$.
Simpliciality of $\phi$ implies $i(\phi(a), \phi(b))=0$.
By the definition of $\bar{\phi}$, $\phi(a)$ is a vertex of $\calcp_n(S)_{\bar{\phi}(\alpha)}$, and $\phi(b)$ is a vertex of $\calcp_n(S)_{\bar{\phi}(\beta)}$.
Applying $(\ast)$ again, we obtain $i(\bar{\phi}(\alpha), \bar{\phi}(\beta))=0$.
The map $\bar{\phi}$ is thus simplicial.

We next assume $i(\bar{\phi}(\alpha), \bar{\phi}(\beta))=0$.
By $(\ast)$, there exist a vertex $a'$ of $\calcp_n(S)_{\bar{\phi}(\alpha)}$ and a vertex $b'$ of $\calcp_n(S)_{\bar{\phi}(\beta)}$ with $i(a', b')=0$.
Claim \ref{claim-bn-fiber} implies that there exist a vertex $a$ of $\calcp_n(S)_{\alpha}$ and a vertex $b$ of $\calcp_n(S)_{\beta}$ with $\phi(a)=a'$ and $\phi(b)=b'$.
Since $\phi$ is superinjective, we have $i(a, b)=0$.
Applying $(\ast)$ again, we obtain $i(\alpha, \beta)=0$.
The map $\bar{\phi}$ is therefore superinjective. 
\end{proof}

Thanks to Irmak \cite[Theorem 1.3]{irmak-ns}, $\bar{\phi}$ is induced by an element of $\mod^*(\bar{S})$.
The map $\bar{\phi}$ is thus surjective.
Claim \ref{claim-bn-fiber} implies that the image of the map $\phi \colon \calcp_n(S)\to \calcp_n(S)$ contains all HBPs in $S$.
Since for any 2-HBP $a$ in $S$, $\phi$ sends the link of $a$ onto the link of $\phi(a)$ by Lemma \ref{lem-n2hbp-lk}, the image of $\phi$ contains all 2-HBCs in $S$.
We thus conclude that $\phi$ is surjective.
\end{proof}

We turn to showing surjectivity of a superinjective map $\psi \colon \calcp_s(S)\to \calcp_s(S)$ when $S=S_{g, 2}$ with $g\geq 2$.
For a surface $R$, we denote by $V_s(R)$ the subset of $V(R)$ consisting of separating curves in $R$, and define $\calc_s(R)$ as the full subcomplex of $\calc(R)$ spanned by $V_s(R)$.
When $g\geq 3$, we prove surjectivity of $\psi$, following the proof of Theorem \ref{thm-bn-surj} and associating to $\psi$ a superinjective map $\bar{\psi}$ from $\calc_s(\bar{S})$ into itself.
This map $\bar{\psi}$ is surjective due to the first author \cite{kida-cohop}.

When $g=2$, we cannot follow the last part of this proof because $\calc_s(\bar{S})$ is of dimension 0.
In this case, we directly show that the map $\Psi \colon V_s(S)\to V_s(S)$ induced by $\psi$ defines a superinjective map from $\calc_s(S)$ into itself.
The latter map is surjective by \cite{kida-cohop}, and thus so is $\psi$.

\begin{thm}\label{thm-bs-surj}
Let $S=S_{g, 2}$ be a surface with $g\geq 2$.
Then any superinjective map $\psi \colon \calcp_s(S)\to \calcp_s(S)$ is surjective.
\end{thm}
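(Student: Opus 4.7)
The plan is to mirror the proof of Theorem \ref{thm-bn-surj} step by step, with the non-separating ingredients replaced by their separating analogues that have already been established: Lemma \ref{lem-root-curve} together with Lemma \ref{lem-s-well} produce a map $\Psi\colon V_s(S)\to V_s(S)$ satisfying $\psi(\{\alpha,\beta\})=\{\Psi(\alpha),\Psi(\beta)\}$ for every HBP $\{\alpha,\beta\}$; Lemma \ref{lem-s2hbp-lk} supplies the equality $\psi(\lk(b))=\lk(\psi(b))$ for every 2-HBP $b$; and Lemma \ref{lem-s-hex-arc} combined with Proposition \ref{prop-d-inj} for $\cald(Y)$ will yield the analogue of Claim \ref{claim-bn-eq-lk}. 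More precisely, for a 1-HBP $a$ with curve $\alpha\in a$ not separating $\partial S$ and $Y$ the component of $S_a$ of positive genus containing the other curve of $a$ as a boundary component, Lemma \ref{lem-s-hex-arc} identifies the 2-HBPs in $\lk(a)$ containing $\alpha$ with vertices of $\cald(Y)$; Proposition \ref{prop-d-inj} then forces the restriction of $\psi$ to be a surjective self-map of $\cald(Y)$. The fiber statement analogous to Claim \ref{claim-bn-fiber} follows from this link description together with the connectivity provided by Lemma \ref{lem-fiber-conn}(ii): for every HBP $a$ in $S$, $\psi$ maps the fiber $\theta_s^{-1}(\theta_s(a))$ onto $\theta_s^{-1}(\theta_s(\psi(a)))$.

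In the case $g\ge 3$, the fiber statement descends $\psi$ to a well-defined map $\bar{\psi}\colon V_s(\bar{S})\to V_s(\bar{S})$ characterised by $\bar{\psi}(\theta_s(a))=\theta_s(\psi(a))$. Repeating verbatim the argument of Claim \ref{claim-n-si}, based on the equivalence that two separating curves in $\bar{S}$ are disjoint if and only if some $a\in \calcp_s(S)_\alpha$ and $b\in \calcp_s(S)_\beta$ are disjoint (where Lemma \ref{lem-fiber-conn}(ii) gives connectivity of the fibers), shows that $\bar{\psi}$ is a superinjective self-map of $\calc_s(\bar{S})$. Since $\bar{S}$ is a closed orientable surface of genus at least three, \cite{kida-cohop} applies and $\bar{\psi}$ is induced by an element of $\mod^*(\bar{S})$, hence surjective; the fiber statement upgrades this to surjectivity of $\psi$ on HBPs, and Lemma \ref{lem-s2hbp-lk} then hands surjectivity on HBCs.

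The case $g=2$ requires a different final step because $\calc_s(\bar{S})=\calc_s(S_{2,0})$ has dimension zero, so descending to $\bar{S}$ gives nothing. Instead, the plan is to show directly that $\Psi\colon V_s(S)\to V_s(S)$ defines a superinjective self-map of $\calc_s(S)$; applying \cite{kida-cohop} to $\calc_s(S)$ gives surjectivity of $\Psi$, and combining with Lemma \ref{lem-s2hbp-lk} and the fiber statement yields surjectivity of $\psi$. For this reduction one needs the $S$-level analogue of fact $(\ast)$: two separating curves $\alpha,\beta\in V_s(S)$ are disjoint if and only if there exist HBPs $a,b$ in $S$ with $\alpha\in a$, $\beta\in b$ and $i(a,b)=0$. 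One direction is immediate; the other is a surface-topology check that every separating curve in $S_{g,2}$ participates in some HBP and that disjoint separating curves can be extended to disjoint HBPs, which must be done with care separately for curves that do or do not separate the two components of $\partial S$.

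The main obstacle I expect is the $g=2$ case, both in setting up the $S$-level equivalence $(\ast)$ and, before that, in checking that the various preliminary steps (existence of $\Psi$, fiber statement, link statement) go through when the fibers over $V_s(\bar{S})$ are too small to give rich combinatorics. The upstream machinery is general enough to survive this, but verifying that all constructions remain well-defined in the low-genus case is the delicate point.
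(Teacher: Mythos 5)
Your overall architecture matches the paper's proof: the map $\Psi$ from Lemmas \ref{lem-root-curve} and \ref{lem-s-well}, the link claims via Lemma \ref{lem-s-hex-arc} and Proposition \ref{prop-d-inj} (though note the paper's Claim \ref{claim-bs-eq-lk} must treat \emph{both} components $Y$ and $Z$ of $S_{\alpha_1}$, comparing dimensions of $\cald(Y),\cald(Y')$ and $\cald(Z),\cald(Z')$ to pin down homeomorphism types), the fiber statement, the descent to $\bar{\psi}$ on $\calc_s(\bar{S})$ for $g\geq 3$ with \cite{kida-cohop}, and the switch to a direct argument on $\calc_s(S)$ when $g=2$. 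The $g\geq 3$ half of your plan is essentially the paper's.

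The gap is in the $g=2$ case. Your equivalence $(\ast)$ together with the fiber statement does not suffice to prove superinjectivity of $\Psi$. Concretely: given $i(\Psi(\alpha),\Psi(\beta))=0$ and disjoint HBPs $a'\ni\Psi(\alpha)$, $b'\ni\Psi(\beta)$, the fiber statement only produces preimages $a,b$ in the correct $\theta_s$-fibers; it does not guarantee $\alpha\in a$ and $\beta\in b$. To conclude that, you need injectivity of $\Psi$ on $V_s(S)$, and that is itself one of the things to be proved (the case $\Psi(\alpha)=\Psi(\beta)$ must be excluded). The paper fills this hole with an additional statement (its Claim \ref{claim-g2}): for every separating non-HBC curve $\alpha$, the map $\psi$ sends the set of vertices disjoint from $\alpha$ \emph{onto} the set of vertices disjoint from $\Psi(\alpha)$, and the set of HBPs containing $\alpha$ onto the set of HBPs containing $\Psi(\alpha)$. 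This is proved by a separate argument in two cases (according to whether $\alpha$ separates the two components of $\partial S$), inducing a superinjective map of $\calc_s(S_{1,3})$ and invoking Theorem 1.1 of \cite{kida-cohop} there, or inducing injective maps of $\cald(R_1)$ and $\cald(R_2)$ and invoking Proposition \ref{prop-d-inj}. From this claim both the injectivity of $\Psi$ and the pullback of HBPs follow, and the mixed case ($\alpha$ not an HBC, $\beta$ an HBC) is also handled. Finally, a small correction: not every separating curve in $S_{g,2}$ lies in an HBP --- an HBC never does, since $\pi$ sends it to $\ast$ --- so your version of $(\ast)$ must be restricted to non-HBC separating curves, with HBCs handled directly through $\psi$.
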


\begin{proof}
Throughout this proof, we mean by an HBP in $S$ a separating HBP in $S$.
For a vertex $a$ of $\calcp_s(S)$, we denote by $\lk(a)$ the link of $a$ in $\calcp_s(S)$.
We denote by $\lk(a)^0$ the set of vertices in $\lk(a)$.
By Lemmas \ref{lem-root-curve} and \ref{lem-s-well}, we have a map $\Psi \colon V_s(S)\to V_s(S)$ with $\psi(\{ \alpha, \beta \})=\{ \Psi(\alpha), \Psi(\beta)\}$ for any HBP $\{ \alpha, \beta \}$ in $S$ and with $\psi(\gamma)=\Psi(\gamma)$ for any HBC $\gamma$ in $S$.

\begin{claim}\label{claim-bs-eq-lk}
Let $a$ be a 1-HBP in $S$.
We denote by $\alpha_0$ the curve in $a$ that does not separate the two components of $\partial S$, and denote by $\alpha_1$ the curve in $a$ distinct from $\alpha_0$.
Then we have the equalities
\begin{align*}
\psi(\{\, b\in \lk(a)^0\mid \alpha_0\in b \,\})&=\{\, c\in \lk(\psi(a))^0\mid \Psi(\alpha_0)\in c\,\},\\
\psi(\{\, d\in \lk(a)^0\mid \alpha_1\in d \,\})&=\{\, e\in \lk(\psi(a))^0\mid \Psi(\alpha_1)\in e\,\}.
\end{align*}
\end{claim}

\begin{proof}
Choose a curve $\alpha_2$ in $S$ such that the pair $\{ \alpha_1, \alpha_2\}$ is a 1-HBP in $S$ disjoint and distinct from $a$.
We denote by $Y$ and $Z$ the components of $S_{\alpha_1}$ containing $\alpha_0$ and $\alpha_2$, respectively.
Similarly, we denote by $Y'$ and $Z'$ the components of $S_{\Psi(\alpha_1)}$ containing $\Psi(\alpha_0)$ and $\Psi(\alpha_2)$, respectively.
By Lemma \ref{lem-s-hex-arc}, $\psi$ induces an injective simplicial map from $\cald(Y)$ into $\cald(Y')$ and an injective simplicial map from $\cald(Z)$ into $\cald(Z')$.
Comparing the dimensions of these simplicial complexes, we see that $Y$ and $Y'$ are homeomorphic and that $Z$ and $Z'$ are homeomorphic.
By Proposition \ref{prop-d-inj}, the induced map from $\cald(Z)$ into $\cald(Z')$ is surjective.
The first equality in the claim then follows.

Let $e$ be an element in the right hand side of the second equality in the claim.
This $e$ is a 1-HBP in $S$.
Let $e^1$ denote the curve in $e$ distinct from $\Psi(\alpha_1)$.
Since the 2-HBP $\{ \Psi(\alpha_0), e^1\}$ lies in the right hand side of the first equality in the claim, there exists a curve $d^1$ in $S$ with $\{ \alpha_0, d^1\} \in \lk(a)^0$ and $\psi(\{ \alpha_0, d^1\})=\{ \Psi(\alpha_0), e^1\}$.
We then have the equality $e=\{ \Psi(\alpha_1), e^1\} =\psi(\{ \alpha_1, d^1\})$ by Lemma \ref{lem-root-curve}.
The second equality in the claim follows.
\end{proof}

Recall that we have the simplicial map $\theta_s\colon \calcp_s(S)\to \calc^*(\bar{S})$ associated with the inclusion of $S$ into $\bar{S}$.
The proof of the following claim is obtained as a verbatim translation of that of Claim \ref{claim-bn-fiber}, by exchanging symbols appropriately and using Lemma \ref{lem-s2hbp-lk} and Claim \ref{claim-bs-eq-lk} in place of Lemma \ref{lem-n2hbp-lk} and Claim \ref{claim-bn-eq-lk}.

\begin{claim}
For any HBP $a$ in $S$, we have the equality
\[\psi(\{\, b\in U_s(S)\mid \theta_s(b)=\theta_s(a)\,\})=\{\, c\in U_s(S)\mid \theta_s(c)=\theta_s(\psi(a))\,\},\]
where $U_s(S)$ denotes the set of vertices of $\calcp_s(S)$.
\end{claim}

We now obtain the map $\bar{\psi}\colon V_s(\bar{S})\to V_s(\bar{S})$ with the equality $\bar{\psi}(\theta_s(a))=\theta_s(\psi(a))$ for any HBP $a$ in $S$.
The following claim can be verified along an argument of the same kind as in the proof of Claim \ref{claim-n-si}.

\begin{claim}
The map $\bar{\psi}\colon V_s(\bar{S})\to V_s(\bar{S})$ defines a superinjective map from $\calc_s(\bar{S})$ into itself.
\end{claim}

If $g\geq 3$, then $\bar{\psi}$ is surjective by \cite[Theorem 1.1]{kida-cohop}, and surjectivity of $\psi$ is shown along the end of the proof of Theorem \ref{thm-bn-surj}.
In the rest of the proof of Theorem \ref{thm-bs-surj}, we assume $g=2$ and show that the map $\Psi \colon V_s(S)\to V_s(S)$ defines a superinjective map from $\calc_s(S)$ into itself.

\begin{claim}\label{claim-g2}
Assume $g=2$.
For any separating curve $\alpha$ in $S$ which is not an HBC in $S$, we have the equalities
\begin{align*}
\psi(\{\, b\in U_s(S)\mid i(\alpha, b)=0\,\})&=\{\, c\in U_s(S)\mid i(\Psi(\alpha), c)=0\,\},\\
\psi(\{\, b\in V_{sp}(S)\mid \alpha \in b\, \})&=\{\, c\in V_{sp}(S)\mid \Psi(\alpha)\in c\,\}.
\end{align*}
\end{claim}

\begin{proof}
We first assume that $\alpha$ does not separate the two components of $\partial S$.
Let $Q$ and $Q'$ denote the components of $S_{\alpha}$ and $S_{\Psi(\alpha)}$ containing $\partial S$, respectively, which are homeomorphic to $S_{1, 3}$.
For any separating curve $\beta$ in $Q$, either $\beta$ corresponds to an HBC in $S$ or the pair $\{ \alpha, \beta \}$ is an HBP in $S$.
The same property holds for $Q'$ and $\Psi(\alpha)$ in place of $Q$ and $\alpha$, respectively.
It follows that $\psi$ induces a superinjective map from $\calc_s(Q)$ into $\calc_s(Q')$, which is surjective by \cite[Theorem 1.1]{kida-cohop}.
The first equality in the claim is proved.
If $b=\{ b^1, b^2\}$ is an HBP in $S$ with $i(\alpha, b)=0$ and $\alpha \not\in b$, then any two of $\alpha$, $b^1$ and $b^2$ form an HBP in $S$, and we have $\Psi(\alpha)\not\in \psi(b)$ by Lemma \ref{lem-root-curve}.
The second equality in the claim thus follows from the first equality.

We next assume that $\alpha$ separates the two components of $\partial S$.
Let us denote by $R_1$ and $R_2$ the components of $S_{\alpha}$, and pick a curve $\beta$ in $R_2$ with $\{ \alpha, \beta \}$ a 1-HBP in $S$.
We define $R_1'$ and $R_2'$ the components of $S_{\Psi(\alpha)}$ with $\Psi(\beta)\in V(R_2')$.
By Lemma \ref{lem-s-hex-arc}, for any two vertices $l_1$, $l_2$ of $\cald(R_1)$, they form an edge of $\cald(R_1)$ if and only if there exists a hexagon in $\calcp_s(S)$ satisfying the assumption in Proposition \ref{prop-s-hex} and containing the HBPs $\{ \beta, \alpha \}$, $\{ \beta, \gamma_1\}$ and $\{ \beta, \gamma_2\}$, where $\gamma_1$ and $\gamma_2$ are HBCs in $R_1$ defined by the arcs $l_1$ and $l_2$, respectively.
The same property holds for $R_1'$, $\Psi(\alpha)$ and $\Psi(\beta)$ in place of $R_1$, $\alpha$ and $\beta$, respectively.
The map $\psi$ induces an injective simplicial map from $\cald(R_1)$ into $\cald(R_1')$, which is surjective by Proposition \ref{prop-d-inj}.
Similarly, $\psi$ induces a simplicial isomorphism from $\cald(R_2)$ onto $\cald(R_2')$.
The first equality in the claim is thus proved.
If $b$ is an HBP in $S$ with $i(\alpha, b)=0$ and $\alpha \not\in b$, then $b$ is a 2-HBP in $S$.
Since $\Psi(\alpha)$ separates the two components of $\partial S$, we have $\Psi(\alpha)\not\in \psi(b)$.
The second equality in the claim thus follows from the first equality.
\end{proof}

We now prove that $\Psi \colon V_s(S)\to V_s(S)$ defines a superinjective map from $\calc_s(S)$ into itself.
We first show that $\Psi$ defines a simplicial map from $\calc_s(S)$ into itself.
Let $\alpha$ and $\beta$ be separating curves in $S$ with $\alpha \neq \beta$ and $i(\alpha, \beta)=0$.
If $\alpha$ is an HBC, then there exists a curve $\beta'$ in $S$ such that $\{ \beta, \beta'\}$ is an HBP disjoint from $\alpha$.
Since $\psi(\alpha)$ and $\psi(\{ \beta, \beta'\})$ are disjoint, we have $i(\Psi(\alpha), \Psi(\beta))=0$.
If neither $\alpha$ nor $\beta$ is an HBC, then $\{ \alpha, \beta \}$ is an HBP because the genus of $S$ is equal to 2.
We thus obtain $i(\Psi(\alpha), \Psi(\beta))=0$.
It follows that $\Psi$ defines a simplicial map from $\calc_s(S)$ into itself.

We next pick two separating curves $\alpha$, $\beta$ in $S$ with $i(\Psi(\alpha), \Psi(\beta))=0$, and show $i(\alpha, \beta)=0$.
If both $\alpha$ and $\beta$ are HBCs in $S$, then we have $\Psi(\alpha)=\psi(\alpha)$ and $\Psi(\beta)=\psi(\beta)$, and thus $i(\alpha, \beta)=0$ by superinjectivity of $\psi$.

Suppose that $\alpha$ is not an HBC and that $\beta$ is an HBC.
Since $\Psi(\beta)$ is an HBC and is disjoint from $\Psi(\alpha)$, the curve $\Psi(\alpha)$ does not separate the two components of $\partial S$. 
It follows that $\alpha$ does not separate the two components of $\partial S$.
Choose a 2-HBP $a'$ in $S$ with $\Psi(\alpha)\in a'$ and $i(a', \Psi(\beta))=0$.
Claim \ref{claim-g2} implies that there exists an HBP $a$ in $S$ with $\alpha \in a$ and $\psi(a)=a'$.
Since we have $i(\psi(a), \psi(\beta))=i(a', \Psi(\beta))=0$, superinjectivity of $\psi$ implies $i(a, \beta)=0$ and thus $i(\alpha, \beta)=0$.

We finally suppose that neither $\alpha$ nor $\beta$ is an HBC.
Neither $\Psi(\alpha)$ nor $\Psi(\beta)$ is an HBC.
If $\Psi(\alpha)=\Psi(\beta)$, then Claim \ref{claim-g2} and injectivity of $\psi$ imply the equality
\[\{\, b\in U_s(S)\mid i(b, \alpha)=0\,\}=\{\, c\in U_s(S)\mid i(c, \beta)=0\,\}.\]
We thus have the equality $\alpha =\beta$ and particularly $i(\alpha, \beta)=0$.
If $\Psi(\alpha)\neq \Psi(\beta)$, then $\{ \Psi(\alpha), \Psi(\beta)\}$ is an HBP in $S$ because the genus of $S$ is equal to 2.
By Claim \ref{claim-g2}, there exist curves $\alpha_1$, $\beta_1$ in $S$ such that the pairs $\{ \alpha, \alpha_1\}$ and $\{ \beta, \beta_1\}$ are HBPs in $S$ and we have the equality $\psi(\{ \alpha, \alpha_1\})=\psi(\{ \beta, \beta_1\})=\{ \Psi(\alpha), \Psi(\beta)\}$.
Injectivity of $\psi$ implies the equality $\{ \alpha, \alpha_1\}=\{ \beta, \beta_1\}$ and particularly $i(\alpha, \beta)=0$.

We thus proved that $\Psi$ defines a superinjective map from $\calc_s(S)$ into itself, which is surjective by \cite[Theorem 1.1]{kida-cohop}.
It follows from Claim \ref{claim-g2} that $\psi$ is surjective.
\end{proof}


\subsection{The case of $p\geq 3$}

We first prove the following:

\begin{prop}\label{prop-p3-conn}
Let $S=S_{g, p}$ be a surface with $g\geq 2$ and $p\geq 2$. Then the following assertions hold:
\begin{enumerate}
\item The full subcomplex of $\calcp_n(S)$ spanned by all vertices corresponding to 2-HBCs or $p$-HBPs is connected. 
\item If $p\geq 3$, then the full subcomplex of $\calcp_s(S)$ spanned by all vertices corresponding to 2-HBCs or $p$-HBPs is connected.
\end{enumerate}
\end{prop}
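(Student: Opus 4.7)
The plan for proving both parts is to reduce connectivity of the subcomplex to two claims: (A) every $p$-HBP of the required type is adjacent in the subcomplex to some 2-HBC in $S$; and (B) any two 2-HBCs in $S$ lie in the same connected component of the subcomplex. Together with the transitive action of $\mod^*(S)$ on 2-HBCs, these two facts yield the claimed connectivity, since every connected component then meets the $\mod^*(S)$-orbit of any fixed 2-HBC.

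For (A), both directions are topological. Given a $p$-HBP $a=\{\alpha,\beta\}$, the genus-zero component of $S_a$ is a sphere with $p+2$ holes containing every component of $\partial S$; for any chosen pair of boundary components, the simple closed curve in this component that encircles exactly that pair is a 2-HBC in $S$ disjoint from $a$. Conversely, given a 2-HBC $\gamma$ in $S$, cutting $S$ along $\gamma$ produces a pair of pants and a surface $R_\gamma\cong S_{g,p-1}$; since $g\geq 2$, one locates a non-separating (resp.\ separating) $(p-1)$-HBP in $R_\gamma$, which lifts to the desired $p$-HBP in $S$ disjoint from $\gamma$. The availability of a separating $(p-1)$-HBP in $R_\gamma$ that can subsequently be used to connect 2-HBCs is one source of the stronger hypothesis $p\geq 3$ in (ii).

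For (B), I would apply Putman's technique (\cite{putman-conn}). Fix a 2-HBC $\gamma_0$ in $S$; since $\mod^*(S)$ acts transitively on 2-HBCs (by permuting boundary components and applying the change-of-coordinates principle), it suffices to exhibit a finite generating set $T$ of $\mod^*(S)$ such that $\gamma_0$ and $t\gamma_0$ lie in the same connected component of the subcomplex for every $t\in T$. Take $T$ to consist of Dehn twists about a Humphries-style collection of curves. For a generator $t_c$ with $c$ disjoint from $\gamma_0$, we have $t_c\gamma_0=\gamma_0$, and the conclusion is immediate. For the remaining generators $t_c$, one exhibits a $p$-HBP $a$ of the required type disjoint from both $\gamma_0$ and $t_c\gamma_0$, giving the length-two path $\gamma_0\to a\to t_c\gamma_0$ in the subcomplex.

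The main obstacle is this last construction: producing, for each generator $t_c$ whose support curve $c$ intersects $\gamma_0$ essentially, a common adjacent $p$-HBP disjoint from both 2-HBCs. This amounts to analyzing the topology of the complement $S\setminus(\gamma_0\cup t_c\gamma_0)$ and identifying a subsurface of the form $S_{g',p'}$ (with $g'\geq 1$ and appropriate $p'$) carrying an HBP of the correct separating or non-separating type whose lift to $S$ is disjoint from both $\gamma_0$ and $t_c\gamma_0$. For the non-separating variant this is possible as soon as $p\geq 2$, but for the separating variant one needs enough boundary components of $S$ to navigate between different topological types of separating $p$-HBPs, which is precisely the combinatorial content of the hypothesis $p\geq 3$ in part (ii).
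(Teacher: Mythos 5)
Your strategy is essentially the paper's: both arguments run Putman's connectivity technique \cite{putman-conn} with a Dehn-twist generating set and join vertices by short paths alternating between $2$-HBCs and $p$-HBPs. The only structural difference is the choice of base vertex. The paper fixes a $p$-HBP $a$ (resp.\ $b$) in standard position with respect to the Gervais generators of $\pmod(S)$, observes that every $2$-HBC is disjoint from some $\pmod(S)$-translate of it (and, in the separating case, routes an arbitrary separating $p$-HBP $b'$ through a $2$-HBC to a translate of $b$, since $\pmod(S)$ is not transitive on separating $p$-HBPs), and then joins $t_{\gamma}a$ to $a$ through an explicitly drawn $2$-HBC for each generator $\gamma$. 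Your reduction to claims (A) and (B), and your proof of (A), are correct.

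There are, however, two genuine gaps in your treatment of (B). First, $\mod^{*}(S)$ is not generated by Dehn twists (it contains orientation-reversing classes, and transitivity on $2$-HBCs forces permutations of the boundary components), so the group you invoke and the generating set you propose do not match. This is repairable: work with $\pmod(S)$ and the Gervais generators \cite{gervais}, and use (A) together with the fact that the holed sphere cut off by a $p$-HBP contains \emph{all} components of $\partial S$ to check that the smaller $\pmod(S)$-orbit of a fixed $2$-HBC still meets every connected component. Second, and decisively, the whole technical content of the proposition is the exhibition, for each generator $t_{c}$ with $i(c,\gamma_{0})\neq 0$, of a $p$-HBP of the required separating or non-separating type disjoint from both $\gamma_{0}$ and $t_{c}\gamma_{0}$; you name this as ``the main obstacle'' and defer it rather than producing the curves. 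The paper's choice of base point is precisely what makes this step short --- the standard $p$-HBP of Figure \ref{fig-hbchbp} meets only one or two of the generating curves, and the intermediate $2$-HBCs $c_{1}$, $c_{2}$ are written down explicitly. Without that construction (or its mirror image for your base point) the proof is incomplete, and your proposed explanation of the hypothesis $p\geq 3$ in (ii) remains an unsubstantiated heuristic rather than a verified step.
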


\begin{proof}
We follow the idea in \cite[Lemma 2.1]{putman-conn} to prove connectivity of simplicial complexes on which $\pmod(S)$ acts, as in the proof of Lemma \ref{lem-d-y} (ii).
It is known that $\pmod(S)$ is generated by Dehn twists about the curves drawn in Figure \ref{fig-hbchbp} (a) (see \cite{gervais}).
\begin{figure}
\begin{center}
\includegraphics[width=12cm]{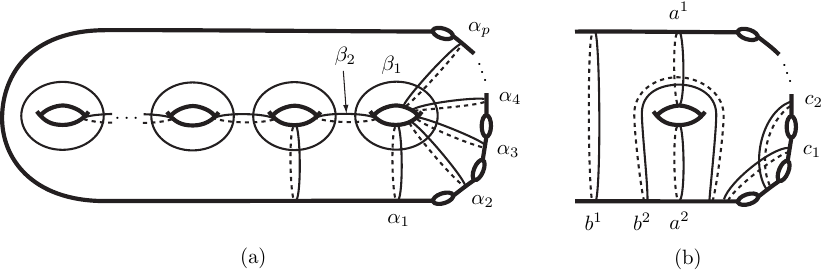}
\caption{(a) $\pmod(S)$ is generated by Dehn twists about these curves. (b) $\{ a^1, a^2\}$ is a non-separating $p$-HBP, and $\{ b^1, b^2\}$ is a separating $p$-HBP.}\label{fig-hbchbp}
\end{center}
\end{figure}
Let $a=\{ a^1, a^2\}$ denote the non-separating $p$-HBP in Figure \ref{fig-hbchbp} (b).
For any 2-HBC $\alpha$ in $S$, there exists $h\in \pmod(S)$ such that $\alpha$ is disjoint from $ha$.
To prove assertion (i), it thus suffices to show that for any curve $\gamma$ in Figure \ref{fig-hbchbp} (a), $t_{\gamma}a$ and $a$ can be connected by a path in $\calcp_n(S)$ consisting of vertices corresponding to $p$-HBPs or 2-HBCs.
This is true because $t_{\beta_1}a$ and $a$ can be connected via the 2-HBC $c_1$ in Figure \ref{fig-hbchbp} (b).
Assertion (i) follows.

Let $b=\{ b^1, b^2\}$ denote the separating $p$-HBP in Figure \ref{fig-hbchbp} (b).
For any 2-HBC $\alpha$ in $S$, there exists $h\in \pmod(S)$ such that $\alpha$ is disjoint from $hb$.
For any separating $p$-HBP $b'$ in $S$, there exists $h'\in \pmod(S)$ such that $b'$ is disjoint from $h'c_1$.
Since $b$ is disjoint from $c_1$, we have the path $b'$, $h'c_1$, $h'b$ in $\calcp_s(S)$.
To prove assertion (ii), it thus suffices to show that for any curve $\gamma$ in Figure \ref{fig-hbchbp} (a), $t_{\gamma}b$ and $b$ can be connected by a path in $\calcp_s(S)$ consisting of vertices corresponding to $p$-HBPs or 2-HBCs.
This is true because $t_{\alpha_2}b$ and $b$ can be connected via the 2-HBC $c_2$ in Figure \ref{fig-hbchbp} (b); for any $j=3,\ldots, p$, $t_{\alpha_j}b$ and $b$ can be connected via $c_1$; and $t_{\beta_2}b$ and $b$ can be connected via $c_1$.
\end{proof}

\begin{thm}\label{thm-surj-p3}
Let $S=S_{g, p}$ be a surface with $g\geq 2$ and $p\geq 3$.
Let $\phi \colon \calcp_n(S)\to \calcp_n(S)$ and $\psi \colon \calcp_s(S)\to \calcp_s(S)$ be superinjective maps.
Then $\phi$ and $\psi$ are surjective.
\end{thm}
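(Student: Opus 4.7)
The proof proceeds by induction on $p$, with the base case $p=2$ supplied by Theorems \ref{thm-bn-surj} and \ref{thm-bs-surj}. Fix $p\geq 3$ and assume the theorems for all surfaces $S_{g', p'}$ with $g'\geq 2$ and $2\leq p'<p$. Consider a superinjective map $\phi\colon \calcp_n(S)\to \calcp_n(S)$; the argument for $\psi$ is obtained by interchanging $\calcp_n$ with $\calcp_s$ throughout.

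The first step is to show that for each 2-HBC $\alpha$ in $S$, the restriction of $\phi$ to $\lk(\alpha)$ is a bijection onto $\lk(\phi(\alpha))$. By Lemma \ref{lem-phi-jhbc}, $\phi(\alpha)$ is a 2-HBC. Let $Q$ and $Q'$ denote the positive-genus components of $S_\alpha$ and $S_{\phi(\alpha)}$; both are homeomorphic to $S_{g,p-1}$. Since the pair of pants cut off by $\alpha$ becomes a disk after capping, $\bar S$ coincides with $\bar Q$, and consequently the HBCs and non-separating HBPs in $S$ that are disjoint from $\alpha$ are exactly those of $Q$. This yields canonical simplicial isomorphisms $\lk(\alpha)\cong \calcp_n(Q)$ and $\lk(\phi(\alpha))\cong \calcp_n(Q')$. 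Under these identifications, $\phi|_{\lk(\alpha)}$ becomes a superinjective map $\calcp_n(Q)\to \calcp_n(Q')$; composing with the isomorphism induced by any homeomorphism $Q'\to Q$ gives a superinjective self-map of $\calcp_n(S_{g,p-1})$, which is surjective by the induction hypothesis.

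The second step is to show that $\phi$ is surjective on the set of 2-HBCs of $S$. Let $A$ denote the image of $\phi$ restricted to 2-HBCs, which is non-empty. The first step, together with Lemmas \ref{lem-phi-jhbc} and \ref{lem-phi-nskhbp}, implies that whenever $\alpha'\in A$ every 2-HBC and every $p$-HBP disjoint from $\alpha'$ lies in the image of $\phi$. By Proposition \ref{prop-p3-conn}(i), the subcomplex $\mathcal{X}$ of $\calcp_n(S)$ spanned by 2-HBCs and $p$-HBPs is connected, so any 2-HBC $\gamma$ is joined to some element of $A$ by a path in the $1$-skeleton of $\mathcal{X}$. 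Propagating membership in $A$ along such a path encounters the subtle case of an edge $\{b,v\}$ in which $b$ is a $p$-HBP already in the image of $\phi$, say $b=\phi(b')$; here the plan is to select, by a topological construction inside the genus-zero side of $b'$, a 2-HBC $\alpha\in \lk(b')$ with the property that $v$ is disjoint from $\phi(\alpha)$, and then invoke the first step at $\alpha$ to place $v$ in the image.

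Once $\phi$ is surjective on 2-HBCs, surjectivity on all of $\calcp_n(S)$ follows at once: for any vertex $v$, one chooses a 2-HBC $\alpha'$ disjoint from $v$ (which exists because $p\geq 3$), writes $\alpha'=\phi(\alpha)$ using the second step, and applies the first step to obtain $v\in \lk(\alpha')=\phi(\lk(\alpha))$. The verbatim outline applies to $\psi$, using the separating analogues of the cited results and Proposition \ref{prop-p3-conn}(ii). I expect the principal obstacle to be the propagation through $p$-HBPs in the second step: the link of a $p$-HBP does not admit as clean an identification with $\calcp_n$ of a smaller surface as the link of a 2-HBC does, so the argument must be completed by a more delicate topological selection of the auxiliary 2-HBC $\alpha$ each time the path of $\mathcal{X}$ passes through a $p$-HBP.
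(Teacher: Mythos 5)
Your first step---identifying the link of a $2$-HBC $\alpha$ with $\calcp_n(S_{g,p-1})$ and invoking the induction hypothesis to conclude $\phi(\lk(\alpha))=\lk(\phi(\alpha))$---is exactly what the paper does, and your closing observation that every vertex of $\calcp_n(S)$ is disjoint from some $2$-HBC is fine. The genuine gap is at the point you yourself flag as the ``principal obstacle,'' and the plan you sketch there does not close it. When the path in $\mathcal{X}$ passes through a $p$-HBP $b=\phi(b')$ and you want to capture an adjacent vertex $v$, you propose to select a $2$-HBC $\alpha\in\lk(b')$ with $\phi(\alpha)$ disjoint from $v$. But superinjectivity only tells you that $\phi(\alpha)$ lies in $\lk(\phi(b'))$; it gives no control over \emph{which} vertices of $\lk(\phi(b'))$ are actually attained, so there is no way to guarantee that some $\alpha$ has $\phi(\alpha)$ disjoint from the prescribed $v$. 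Knowing that $\phi$ carries $\lk(b')$ onto $\lk(\phi(b'))$ is essentially what has to be proved at that vertex, so the selection argument is circular. Note also that you cannot sidestep the issue by discarding the $p$-HBPs from $\mathcal{X}$: for $p=3$ any two disjoint distinct $2$-HBCs necessarily encircle the same pair of boundary components, so the subcomplex spanned by $2$-HBCs alone is disconnected and the passage through $p$-HBPs is unavoidable.

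The paper closes this gap with an input your proposal lacks: for a $p$-HBP $b$ and a curve $\beta\in b$, the set of HBCs in $\lk(b)$ together with the HBPs in $\lk(b)$ containing $\beta$ is identified with the vertex set of $\calc(S_{0,p+2})$, the curve complex of the genus-zero side of $b$, and $\phi$ induces a superinjective self-map of $\calc(S_{0,p+2})$, which is surjective by Theorem 2 of \cite{bm-ar}. This yields $\phi(\lk(b))=\lk(\phi(b))$ for $p$-HBPs as well, after which the propagation along $\mathcal{X}$ via Proposition \ref{prop-p3-conn} requires no case distinction between the two types of vertices. To complete your argument you need this (or an equivalent) surjectivity statement for the links of $p$-HBPs; the rest of your outline then matches the paper's proof.
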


\begin{proof}
In Lemmas \ref{lem-phi-nskhbp} and \ref{lem-phi-jhbc}, we have already shown that for each $j$ and each $k$, the map $\phi$ preserves $j$-HBPs and $k$-HBCs, respectively.
We prove surjectivity of $\phi$ by induction on $p$.
For any 2-HBC $\alpha$ in $S$, $\phi$ induces a map from the link of $\alpha$ in $\calcp_n(S)$ into that of $\phi(\alpha)$.
This induced map can be identified with a superinjective map from $\calcp_n(S_{g, p-1})$ into itself, which is surjective by the hypothesis of the induction.
The image of $\phi$ thus contains all vertices of the link of $\phi(\alpha)$.
For any non-separating $p$-HBP $b$ in $S$, $\phi$ induces a map from the link of $b$ in $\calcp_n(S)$ into that of $\phi(b)$.
For any curve $\beta$ in $b$, the restriction of this induced map to the set of HBCs and HBPs containing $\beta$ can be identified with a superinjective map from $\calc(S_{0, p+2})$ into itself.
Since the latter map is surjective by \cite[Theorem 2]{bm-ar}, the image of $\phi$ contains all vertices of the link of $\phi(b)$.
By Proposition \ref{prop-p3-conn} (i), $\phi$ is surjective.

Similarly, we can prove surjectivity of $\psi$ by using Proposition \ref{prop-p3-conn} (ii).
\end{proof}

\begin{cor}
Let $S=S_{g, p}$ be a surface with $g\geq 2$ and $p\geq 2$.
Then any superinjective map from $\calcp(S)$ into itself is surjective.
\end{cor}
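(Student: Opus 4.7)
The plan is to combine surjectivity of superinjective self-maps on the subcomplexes $\calcp_n(S)$ and $\calcp_s(S)$, already established in Theorems \ref{thm-bn-surj}, \ref{thm-bs-surj} and \ref{thm-surj-p3}, by restricting the given superinjective map $\phi\colon \calcp(S)\to \calcp(S)$ to each subcomplex in turn.

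First, restrict $\phi$ to $\calcp_n(S)$. The restriction is a superinjective map into $\calcp(S)$, so by Lemma \ref{lem-bn-b} its image lies in $\calcp_n(S)$, producing a superinjective self-map $\phi_n\colon \calcp_n(S)\to \calcp_n(S)$. Theorems \ref{thm-bn-surj} and \ref{thm-surj-p3} show $\phi_n$ is surjective. Hence every HBC and every non-separating HBP belongs to the image of $\phi$.

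Next, I claim that $\phi$ sends each separating HBP $a$ to a separating HBP. If $\phi(a)$ were a vertex of $\calcp_n(S)$, then by surjectivity of $\phi_n$ there would exist $v\in V_c(S)\sqcup V_{np}(S)$ with $\phi(v)=\phi(a)$; injectivity of $\phi$ (a consequence of superinjectivity, as noted in Section \ref{subsec-si}) would then force $a=v\in V_c(S)\sqcup V_{np}(S)$, contradicting $a\in V_{sp}(S)$. Since the vertex set of $\calcp(S)$ decomposes as the disjoint union $V_c(S)\sqcup V_{np}(S)\sqcup V_{sp}(S)$, this forces $\phi(a)\in V_{sp}(S)$. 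Because $\phi$ also sends HBCs to HBCs (through $\phi_n$), the restriction of $\phi$ to $\calcp_s(S)$ yields a superinjective self-map $\phi_s\colon \calcp_s(S)\to \calcp_s(S)$, which is surjective by Theorems \ref{thm-bs-surj} and \ref{thm-surj-p3}.

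Combining these two surjectivity statements shows that $\phi$ hits every vertex of $\calcp(S)$. Because $\calcp(S)$ is a flag-type complex (any pairwise disjoint collection of vertices spans a simplex), surjectivity on vertices combined with superinjectivity upgrades at once to surjectivity as a simplicial map: given any simplex $\{w_0,\ldots,w_k\}$ of $\calcp(S)$, choose preimages $v_i$ with $\phi(v_i)=w_i$, and superinjectivity guarantees that the $v_i$ are pairwise disjoint and hence span a simplex mapping onto $\{w_0,\ldots,w_k\}$. There is no substantial obstacle in this argument; it is a clean bookkeeping step harvesting the surjectivity theorems already proved for $\calcp_n(S)$ and $\calcp_s(S)$, with injectivity of $\phi$ bridging the two via the vertex-set decomposition above.
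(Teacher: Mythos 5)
Your argument is correct and follows essentially the same route as the paper: restrict to $\calcp_n(S)$ via Lemma \ref{lem-bn-b} and Theorems \ref{thm-bn-surj} and \ref{thm-surj-p3}, then use injectivity of $\phi$ together with $\phi(\calcp_n(S))=\calcp_n(S)$ to force $\phi(\calcp_s(S))\subset \calcp_s(S)$, and conclude with Theorems \ref{thm-bs-surj} and \ref{thm-surj-p3}. The closing remark about upgrading vertex-surjectivity to surjectivity of the simplicial map is already covered by the general observation in Section \ref{subsec-si}.
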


\begin{proof}
Let $\phi \colon \calcp(S)\to \calcp(S)$ be a superinjective map.
By Lemma \ref{lem-bn-b}, we have the inclusion $\phi(\calcp_n(S))\subset \calcp_n(S)$.
By Theorems \ref{thm-bn-surj} and \ref{thm-surj-p3}, we have the equality $\phi(\calcp_n(S))=\calcp_n(S)$.
Injectivity of $\phi$ implies the inclusion $\phi(\calcp_s(S))\subset \calcp_s(S)$.
By Theorems \ref{thm-bs-surj} and \ref{thm-surj-p3}, we have the equality $\phi(\calcp_s(S))=\calcp_s(S)$.
\end{proof}


\subsection{Superinjective maps from $\calcp_n(S)$ into $\calcp(S)$}

Let $S=S_{g, p}$ be a surface with $g\geq 2$ and $p\geq 2$.
Let $\phi$ be an automorphism of $\calcp_n(S)$.
We denote by $\calc_{nc}(S)$ the full subcomplex of $\calc(S)$ spanned by all vertices corresponding to either a non-separating curve in $S$ or an HBC in $S$.
Using Lemmas \ref{lem-n-well-p3} and \ref{lem-n-well} and following a part of the proof of \cite[Theorem 6.1]{kida-yama}, we obtain a simplicial automorphism $\Phi$ of $\calc_{nc}(S)$ with $\phi(\{ \alpha, \beta\})=\{ \Phi(\alpha), \Phi(\beta)\}$ for any non-separating HBP $\{ \alpha, \beta \}$ in $S$ and with $\phi(\gamma)=\Phi(\gamma)$ for any HBC $\gamma$ in $S$.
As asserted in \cite[Remark in p.102]{irmak-ns}, the map $\Phi$ extends to a simplicial automorphism of $\calc(S)$, and is thus induced by an element of $\mod^*(S)$ by \cite[Theorem 1]{iva-aut}.
We proved the following:

\begin{thm}\label{thm-aut-bn}
Let $S=S_{g, p}$ be a surface with $g\geq 2$ and $p\geq 2$.
Then any automorphism of $\calcp_n(S)$ is induced by an element of $\mod^*(S)$.
\end{thm}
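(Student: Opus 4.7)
The plan is to reduce the statement to Ivanov's theorem identifying $\aut(\calc(S))$ with $\mod^*(S)$, by constructing from an automorphism $\phi$ of $\calcp_n(S)$ a compatible simplicial automorphism $\Phi$ of the curve complex. Let $\phi$ be an automorphism of $\calcp_n(S)$; in particular $\phi$ and $\phi^{-1}$ are superinjective maps $\calcp_n(S)\to \calcp_n(S)$, so all the preservation results of Sections \ref{sec-bn-b} and \ref{sec-surj} apply to both.

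First I would define $\Phi$ on the union $V_c(S)\sqcup V_n(S)$, where $V_n(S)$ is the set of isotopy classes of non-separating curves in $S$. On $V_c(S)$ we simply set $\Phi(\gamma)=\phi(\gamma)$, which makes sense because $\phi$ sends HBCs to HBCs by Lemma \ref{lem-phi-jhbc}. On $V_n(S)$ we use Lemma \ref{lem-n-well-p3} when $p\geq 3$ and Lemma \ref{lem-n-well} when $p=2$: for a non-separating curve $\alpha$, any two non-separating HBPs through $\alpha$ are sent by $\phi$ to HBPs sharing a common curve, which we define to be $\Phi(\alpha)$. Lemma \ref{lem-phi-nshbp} guarantees that $\Phi(\alpha)$ is again non-separating, and the same construction applied to $\phi^{-1}$ gives an inverse, so $\Phi$ is a bijection of $V_c(S)\sqcup V_n(S)$ satisfying
\[\phi(\{\alpha,\beta\})=\{\Phi(\alpha),\Phi(\beta)\}\quad\text{and}\quad \phi(\gamma)=\Phi(\gamma)\]
for every non-separating HBP $\{\alpha,\beta\}$ and every HBC $\gamma$. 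This is exactly the setup used in the proof of Theorem 6.1 of \cite{kida-yama}, and the same argument there shows $\Phi$ is a simplicial automorphism of the full subcomplex $\calc_{nc}(S)$ of $\calc(S)$ spanned by non-separating curves and HBCs: two such classes are disjoint iff one can exhibit disjoint HBPs (or HBC-HBP pair) containing them, and $\phi$ being a simplicial isomorphism translates this back and forth.

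Next I invoke the remark on p.\,102 of \cite{irmak-ns}, which asserts that any simplicial automorphism of $\calc_{nc}(S)$ extends uniquely to a simplicial automorphism of the full complex of curves $\calc(S)$; this extension is performed by recording, for each separating curve $\delta$, the set of non-separating curves disjoint from $\delta$, and noting that $\Phi$ permutes such sets among themselves. Applying Theorem 1 of \cite{iva-aut} (Ivanov's computation of $\aut(\calc(S))$) to the extended automorphism produces an element $h\in \mod^*(S)$ whose action on $V(S)$ agrees with $\Phi$.

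Finally I verify that $h$ induces $\phi$ on all of $\calcp_n(S)$. For an HBC $\gamma$, $h\cdot \gamma=\Phi(\gamma)=\phi(\gamma)$. For a non-separating HBP $\{\alpha,\beta\}$, $h\cdot\{\alpha,\beta\}=\{\Phi(\alpha),\Phi(\beta)\}=\phi(\{\alpha,\beta\})$. Since $V_c(S)\sqcup V_{np}(S)$ exhausts the vertex set of $\calcp_n(S)$, the action of $h$ coincides with $\phi$. The potentially delicate step is the extension of $\Phi$ from $\calc_{nc}(S)$ to $\calc(S)$, but this is exactly the content of the remark in \cite{irmak-ns} combined with the fact, recorded in the excerpt, that $\phi$ preserves the topological types of all vertices of $\calcp_n(S)$; once this extension is in hand, Ivanov's theorem finishes the proof.
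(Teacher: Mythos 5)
Your proposal is correct and follows essentially the same route as the paper: define $\Phi$ on non-separating curves and HBCs via Lemmas \ref{lem-n-well-p3} and \ref{lem-n-well} and the argument of Theorem 6.1 in \cite{kida-yama}, extend from $\calc_{nc}(S)$ to $\calc(S)$ by the remark in \cite{irmak-ns}, and conclude with Ivanov's theorem. The extra details you supply (using $\phi^{-1}$ to get bijectivity, and the final verification that the resulting mapping class induces $\phi$) are consistent with what the paper leaves implicit.
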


Combining Lemma \ref{lem-bn-b} and Theorems \ref{thm-bn-surj} and \ref{thm-surj-p3}, we obtain the following:

\begin{cor}\label{cor-bn}
Let $S$ be the surface in Theorem \ref{thm-aut-bn}.
Then any superinjective map from $\calcp_n(S)$ into $\calcp(S)$ is induced by an element of $\mod^*(S)$.
\end{cor}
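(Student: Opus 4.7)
The plan is to chain three previously established results to promote a superinjective map $\phi\colon \calcp_n(S)\to \calcp(S)$ first to a self-map of $\calcp_n(S)$, then to an automorphism of $\calcp_n(S)$, and finally to the restriction of a mapping class.

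First I would invoke Lemma \ref{lem-bn-b}, which asserts precisely the inclusion $\phi(\calcp_n(S))\subset \calcp_n(S)$. This lets me regard $\phi$ as a superinjective map from $\calcp_n(S)$ into itself, not merely into the larger complex $\calcp(S)$. Nothing further about the topological types of the images needs to be checked at this stage, since Lemmas \ref{lem-phi-nshbp} and \ref{lem-phi-jhbc} have already been fused into Lemma \ref{lem-bn-b}.

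Next I would apply the surjectivity theorems proved just above. When $p=2$, Theorem \ref{thm-bn-surj} yields that the restricted map $\phi\colon \calcp_n(S)\to \calcp_n(S)$ is surjective; when $p\geq 3$, Theorem \ref{thm-surj-p3} gives the same conclusion. Since a superinjective map is in particular simplicial and injective (as recorded after the definition of superinjective map in Section \ref{subsec-si}), surjectivity upgrades $\phi$ to a simplicial automorphism of $\calcp_n(S)$.

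Finally I would invoke Theorem \ref{thm-aut-bn}, which classifies automorphisms of $\calcp_n(S)$: any such automorphism is induced by an element of $\mod^*(S)$. Applied to $\phi$, this produces the required element $\gamma\in \mod^*(S)$ with $\phi=\gamma_*$. There is no genuine obstacle here: the only thing to verify is that the three cited results apply verbatim to the hypotheses of the corollary, which they do since each is stated for the same surface $S=S_{g,p}$ with $g\geq 2$ and $p\geq 2$. Thus the corollary is obtained simply as the composition of Lemma \ref{lem-bn-b}, Theorems \ref{thm-bn-surj} and \ref{thm-surj-p3}, and Theorem \ref{thm-aut-bn}, as indicated in the paragraph immediately preceding its statement.
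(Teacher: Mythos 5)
Your proposal is correct and follows exactly the paper's own route: Lemma \ref{lem-bn-b} restricts the image to $\calcp_n(S)$, Theorems \ref{thm-bn-surj} and \ref{thm-surj-p3} give surjectivity (hence, by the remark after the definition of superinjectivity, an automorphism), and Theorem \ref{thm-aut-bn} identifies that automorphism with an element of $\mod^*(S)$. Nothing is missing.
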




\begin{thebibliography}{99}

\bibitem{be-m}J. Behrstock and D. Margalit, Curve complexes and finite index subgroups of mapping class groups, {\it Geom. Dedicata} {\bf 118} (2006), 71--85.

\bibitem{bm-br}R. W. Bell and D. Margalit, Braid groups and the co-Hopfian property, {\it J. Algebra} {\bf 303} (2006), 275--294.

\bibitem{bm-ar}R. W. Bell and D. Margalit, Injections of Artin groups, {\it Comment. Math. Helv.} {\bf 82} (2007), 725--751.

\bibitem{bellingeri}P. Bellingeri, On automorphisms of surface braid groups, {\it J. Knot Theory Ramifications} {\bf 17} (2008), 1--11.

\bibitem{birman}J. S. Birman, {\it Braids, links, and mapping class groups}, Ann. of Math. Stud., 82, Princeton Univ.\ Press, Princeton, N.J., 1974.

\bibitem{bm}T. Brendle and D. Margalit, Commensurations of the Johnson kernel, {\it Geom. Topol.} {\bf 8} (2004), 1361--1384.

\bibitem{bm-add}T. Brendle and D. Margalit, Addendum to: Commensurations of the Johnson kernel, {\it Geom. Topol.} {\bf 12} (2008), 97--101.

\bibitem{farb-ivanov}B. Farb and N. V. Ivanov, The Torelli geometry and its applications: research announcement, {\it Math. Res. Lett.} {\bf 12} (2005), 293--301.

\bibitem{harer}J. L. Harer, Stability of the homology of the mapping class groups of orientable surfaces, {\it Ann. of Math. (2)} {\bf 121} (1985), 215--249.

\bibitem{gervais}S. Gervais, A finite presentation of the mapping class group of a punctured surface, {\it Topology} {\bf 40} (2001), 703--725. 

\bibitem{harvey}W. J. Harvey, Boundary structure of the modular group, in {\it Riemann surfaces and related topics: Proceedings of the 1978 Stony Brook Conference}, 245--251, Ann. of Math. Stud., 97, Princeton Univ.\ Press, Princeton, N.J., 1981.

\bibitem{hatcher}A. Hatcher, On triangulations of surfaces, {\it Topology Appl.} {\bf 40} (1991), 189--194. 

\bibitem{irmak1}E. Irmak, Superinjective simplicial maps of complexes of curves and injective homomorphisms of subgroups of mapping class groups, {\it Topology} {\bf 43} (2004), 513--541.

\bibitem{irmak2}E. Irmak, Superinjective simplicial maps of complexes of curves and injective homomorphisms of subgroups of mapping class groups II, {\it Topology Appl.} {\bf 153} (2006), 1309--1340.

\bibitem{irmak-ns}E. Irmak, Complexes of nonseparating curves and mapping class groups, {\it Michigan Math. J.} {\bf 54} (2006), 81--110.

\bibitem{iim}E. Irmak, N. V. Ivanov, and J. D. McCarthy, Automorphisms of surface braid groups, preprint,\\ arXiv:math/0306069.

\bibitem{iva-aut}N. V. Ivanov, Automorphisms of complexes of curves and of Teichm\"uller spaces, {\it Int. Math. Res. Not.} {\bf 1997}, no. 14, 651--666. 

\bibitem{kls}R. P. Kent IV, C. J. Leininger and S. Schleimer, Trees and mapping class groups, {\it J. Reine Angew. Math.} {\bf 637} (2009), 1--21.

\bibitem{kida-tor}Y. Kida, Automorphisms of the Torelli complex and the complex of separating curves, {\it J. Math. Soc. Japan} {\bf 63} (2011), 363--417.

\bibitem{kida-cohop}Y. Kida, The co-Hopfian property of the Johnson kernel and the Torelli group, preprint, arXiv:0911.3923, to appear in {\it Osaka J. Math.}

\bibitem{kida-inj}Y. Kida, Injections of the complex of separating curves into the Torelli complex, preprint, arXiv:0911.3926.

\bibitem{kida-yama}Y. Kida and S. Yamagata, Commensurators of surface braid groups, {\it J. Math. Soc. Japan} {\bf 63} (2011),  1391--1435.

\bibitem{kork-aut}M. Korkmaz, Automorphisms of complexes of curves on punctured spheres and on punctured tori, {\it Topology Appl.} {\bf 95} (1999), 85--111.

\bibitem{luo}F. Luo, Automorphisms of the complex of curves, {\it Topology} {\bf 39} (2000), 283--298.

\bibitem{mv}J. D. McCarthy and W. R. Vautaw, Automorphisms of Torelli groups, preprint, arXiv:math/0311250.

\bibitem{mosher}L. Mosher, Tiling the projective foliation space of a punctured surface, {\it Trans. Amer. Math. Soc.} {\bf 306} (1988), 1--70.

\bibitem{pr}L. Paris and D. Rolfsen, Geometric subgroups of surface braid groups, {\it Ann. Inst. Fourier (Grenoble)} {\bf 49} (1999), 417--472.

\bibitem{putman-conn}A. Putman, A note on the connectivity of certain complexes associated to surfaces, {\it Enseign. Math. (2)} {\bf 54} (2008), 287--301. 

\bibitem{sha}K. J. Shackleton, Combinatorial rigidity in curve complexes and mapping class groups, {\it Pacific J. Math.} {\bf 230} (2007), 217--232.

\end{thebibliography}
\end{document}